\documentclass[preprint,3p,times]{elsarticle}

\usepackage{amssymb}
\usepackage{amsmath}
\usepackage{amsthm}
\usepackage{mathrsfs}
\usepackage{multirow}
\DeclareMathOperator{\rank}{rank}
\DeclareMathOperator{\St}{St}
\DeclareMathOperator{\GL}{GL}
\DeclareMathOperator{\tr}{tr}
\DeclareMathOperator{\diag}{diag}
\DeclareMathOperator{\vecop}{vec}
\DeclareMathOperator{\ten}{ten}
\newcommand{\matL}{\operatorname{L}}
\newcommand{\R}{\operatorname{R}}
\DeclareMathOperator{\sym}{sym}
\DeclareMathOperator{\ptr}{Tr}
\DeclareMathOperator{\grad}{grad}
\DeclareMathOperator{\Proj}{Proj}
\DeclareMathOperator{\Retr}{Retr}
\DeclareMathOperator*{\argmin}{arg\,min}
\newcommand{\TT}{\mathrm{TT}}
\newcommand{\rankTT}{\operatorname{rank}_{\TT}}
\newtheorem{lemma}{Lemma}
\newdefinition{definition}{Definition}
\newtheorem{corollary}{Corollary}

\newtheorem{proposition}{Proposition}

\usepackage{xurl}
\usepackage[hidelinks]{hyperref}
\usepackage{cleveref}

\AtBeginEnvironment{thebibliography}{\renewcommand{\url}[1]{}\renewcommand{\newline}{}}
\usepackage{graphicx}
\usepackage{epstopdf}
\usepackage[ruled,vlined,linesnumbered]{algorithm2e}
\crefname{algocf}{algorithm}{Algorithms}
\usepackage{circuitikz}
\usepackage{booktabs}
\usepackage{float}
\usepackage{subcaption}

\usepackage{lineno}

\makeatletter
\let\ps@pprintTitle\ps@empty
\makeatother

\begin{document}

\begin{frontmatter}

\title{High-dimensional extreme eigenvalue problems: low-rank tensor parametrization and optimization}

\author[amss,ucas]{Pengfei Hao}
\ead{haopengfei@amss.ac.cn}

\author[cityu]{Renfeng Peng}
\ead{renfpeng@cityu.edu.hk}

\author[gdut]{Yang Kuang}
\ead{ykuang@gdut.edu.cn}

\author[amss]{Bin Gao\corref{cor1}}
\ead{gaobin@lsec.cc.ac.cn}

\cortext[cor1]{Corresponding author.}

\affiliation[amss]{
  organization={State Key Laboratory of Mathematical Sciences,
    Academy of Mathematics and Systems Science, Chinese Academy of Sciences},
  city={Beijing},
  postcode={100190},
  country={China}
}

\affiliation[ucas]{
  organization={University of Chinese Academy of Sciences},
  city={Beijing},
  postcode={100049},
  country={China}
}

\affiliation[cityu]{
  organization={Department of Data Science, City University of Hong Kong},
  city={Hong Kong SAR},
  postcode={999077},
  country={China}
}

\affiliation[gdut]{
  organization={School of Mathematics and Statistics,
    Guangdong University of Technology},
  city={Guangzhou},
  postcode={510006},
  state={Guangdong},
  country={China}
}

\begin{abstract}
High-dimensional extreme eigenvalue problems often arise from molecular vibrational models, electronic structure calculations, and quantum mechanics. Directly solving these problems suffers from the curse of dimensionality. Instead of tackling the problem in high-dimensional ambient space, we reformulate the problem through low-rank tensor formats, which can significantly reduce the computational cost and storage. Specifically, we consider the Rayleigh--Ritz problem on bounded-rank tensors in the tensor train format, which enables a more flexible choice of rank parameters. Moreover, we consider a smooth parametrization for bounded-rank tensors by introducing slack variables, leading to a smooth manifold structure. The original Rayleigh--Ritz problem is therefore transferred to an optimization problem on the manifold. We develop the Riemannian geometry and propose optimization methods for solving extreme eigenvalue problems. In practice, the Kronecker-product structure in Hamiltonian and PDE operators is employed to simplify the computation. 
Numerical experiments on the harmonic oscillator, the Laplace operator, the Schr{\"o}dinger equation, the layered cluster problem, and the Bose--Einstein model demonstrate that the proposed method achieves accuracy comparable to full-space eigensolvers with reduced computation time and avoids storing full vectors. The results also show numerical rank reduction during iteration and robustness when the rank parameter is over-estimated.
In addition, the fixed-point iterations illustrate that the proposed methods are able to serve as an inner eigensolver for solving nonlinear eigenvalue problems.
\end{abstract}

\begin{keyword}
extreme eigenvalue problem \sep Rayleigh--Ritz problem \sep low-rank tensor \sep Riemannian optimization \sep high-dimensional PDE
\end{keyword}

\end{frontmatter}

\section{Introduction}
High-dimensional extreme eigenvalue problems are central tasks in scientific computing, which has wide applications in molecular vibrational models~\cite{RakhubaOseledets2016}, electronic structure calculations~\cite{HackbuschKhoromskijSauterTyrtyshnikov2012}, beam buckling~\cite{johnson1974beam}, quantum mechanics~\cite{KERNER19891} and PDE stability~\cite{Lehoucq2001}. Specifically, the problem aims to find an eigenvector $\mathbf{x}\in\mathbb{R}^{N}$ for a symmetric matrix $\mathbf{H}\in\mathbb{R}^{N\times N}$ with $N:=\prod_{k=1}^d n_k$, where the high-dimensional space $\mathbb{R}^{n_1n_2\cdots n_d}$ naturally arises from, e.g., the discretization of elliptic operators or quantum Hamiltonians on the product space $\mathbb{R}^{n_1}\otimes\mathbb{R}^{n_2}\otimes\cdots\otimes\mathbb{R}^{n_d}$. 
An extreme eigenpair
$(\lambda,\mathbf{x})$ satisfies
\begin{equation}\label{eq:eigen_pairs}
  \mathbf{H}\mathbf{x}=\lambda\mathbf{x},
  \quad \lambda\in\mathbb{R},\quad \mathbf{x}\in\mathbb{R}^{N}\setminus\{\mathbf{0}\},
\end{equation}
and the smallest (largest) eigenvalue can be computed by the Rayleigh--Ritz problem
\begin{equation}\label{eq:directly_EEP}
  \underset{\mathbf{x}\in\mathbb{R}^{N}\setminus\{\mathbf{0}\}}{\min\,(\max)}\ 
  \frac{\mathbf{x}^{\top}\mathbf{H}\mathbf{x}}{\mathbf{x}^{\top}\mathbf{x}}.
\end{equation}
Since computing the largest eigenvalue of $\mathbf H$ is equivalent to computing the smallest eigenvalue of $-\mathbf H$, we only consider the minimization problem throughout this paper.

Existing eigensolvers in the ambient space $\mathbb{R}^{N}$ treat eigenvectors as unstructured vectors and remain limited in exploiting the tensor structure that often appears in real applications. Rather than working in the ambient space, we consider the tensor space $\mathbb{R}^{n_1}\otimes\mathbb{R}^{n_2}\otimes\cdots\otimes\mathbb{R}^{n_d}\simeq\mathbb{R}^{n_1\times n_2\times\cdots\times n_d}$, and reshape an eigenvector $\mathbf{x}\in\mathbb{R}^{N}$ into a tensor $\mathcal{X}\in\mathbb{R}^{n_1\times n_2\times\cdots\times n_d}$. Consequently, the extreme eigenvalue problem~\eqref{eq:eigen_pairs} is equivalent to $\mathscr{H}\mathcal{X}=\lambda\mathcal{X}$, where $\mathscr{H}$ is a self-adjoint linear operator on the tensor space satisfying $\vecop(\mathscr{H}\mathcal{X})=\mathbf{H}\vecop(\mathcal{X})$.

Note that the dimension of tensor space $\mathbb{R}^{n_1\times n_2\times\cdots\times n_d}$, $n_1 n_2 \cdots n_d$, scales exponentially in $d$. Thus, directly solving the Rayleigh--Ritz problem~\eqref{eq:directly_EEP} in tensor space still suffers from the curse of dimensionality. Nevertheless, a key benefit of tensor representations is that low-rank structure can be exploited to represent high-dimensional tensors using substantially fewer parameters. The tensor train~(TT) format~\cite{Oseledets2011} uses low-dimensional factors in $\mathbb{R}^{r_0\times n_1 \times r_1}\times\cdots\times\mathbb{R}^{r_{d-1}\times n_d \times r_d}$ whose storage scales polynomially with dimension to represent a tensor, which significantly reduces the computational cost and storage if the
tensor has low rank $(r_0,r_1,\ldots,r_d)$.
Moreover, low-rank TT tensors can faithfully represent the ground state of local Hamiltonians~\cite{PhysRevB.73.094423}, which suggests that the original full tensor is able to be effectively approximated by low-rank TT tensors. 
Thus, we adopt the low-rank TT representation in the high-dimensional Rayleigh--Ritz problem~\eqref{eq:directly_EEP}. Specifically, we aim to find a low-rank solution:
\begin{equation}\label{eq:final_EEP}
  \underset{\mathcal{X}\neq 0}{\min}\ \rho(\mathcal{X})
  := \frac{\left\langle \mathcal{X},\mathscr{H}\mathcal{X}\right\rangle_{\mathrm{F}}}
  {\left\langle \mathcal{X},\mathcal{X}\right\rangle_{\mathrm{F}}},
  \quad\text{s.\,t.}\quad
  \mathcal{X} \in\mathbb{R}^{n_1\times \cdots\times n_d}_{\leq\mathbf r^{\TT}},
\end{equation}
where the rank parameters $\mathbf r^{\TT}=(r_0,r_1,\ldots,r_d)$ with $r_0=r_d=1$, and
\begin{equation*}
    \mathbb R^{n_1\times\cdots\times n_d}_{\leq\mathbf r^{\TT}}
  :=\{\mathcal X\in\mathbb R^{n_1\times\cdots\times n_d}:\rankTT(\mathcal X)\leq\mathbf r\}
\end{equation*}
is the set of tensors with bounded TT rank; see~\Cref{sec:preliminary} for details. 

It is worth noting that the low-rankness can alternatively be imposed through the set of fixed-rank TT tensors~\cite{steinlechner2016,RakhubaNovikovOseledets2019} $\{\mathcal X\in\mathbb R^{n_1\times\cdots\times n_d}:\rankTT(\mathcal X)=\mathbf r\}$ instead of the set of bounded-rank TT tensors. However, on the one hand, the set of fixed-rank tensors is not closed~\cite{holtz2012}, thus the limit of convergent sequences could lie outside the set. On the other hand, the set of bounded-rank tensors is the union of different fixed-rank sets, which allows a more flexible choice of rank parameters, especially when the rank of target eigenvector is unknown. Therefore, we formulate the Rayleigh--Ritz minimization problem on the set of bounded-rank tensors.

\paragraph{Related work}
The high-dimensional extreme eigenvalue problem can be directly solved by many methods, e.g., Rayleigh--Chebyshev acceleration~\cite{Anderson2010}, locally optimal block preconditioned conjugate gradients (LOBPCG)~\cite{Knyazev2001}, and production-level multi-method eigensolvers such as PRIMME~\cite{StathopoulosMcCombs2010}. These methods are efficient when the operators and eigenvectors are of moderate size. However, as the dimension increases, the computational cost and storage of operators and eigenvectors can grow significantly, thereby rendering the methods computationally intractable.

The tensor train format~\cite{Oseledets2011} provides a compact representation in which the storage scales linearly in dimension and polynomially in mode size and rank. In computational physics, the same representation is known as the matrix product state (MPS)~\cite{Schollwoeck2011}; it is closely related to density-matrix renormalization group (DMRG) methods~\cite{White1992,Schollwoeck2011} and time-dependent variational formulations~\cite{HaegemanLubichOseledetsVandereyckenVerstraete2016}. TT/MPS representations therefore offer a natural way to replace full vectors by structured low-rank tensors in high-dimensional spectral computations.

Existing approaches to high-dimensional eigenvalue problems based on tensor formats include: 1)~classical iterative eigensolvers but using low-rank tensor representations, such as inverse iteration and subspace-based methods; 2)~optimization methods, e.g., gradient methods and alternating updates of the tensor cores. More specifically, a variant of LOBPCG method in the hierarchical Tucker format was proposed in~\cite{KressnerTobler2011}.
Rayleigh quotient minimization was studied in block quantized tensor train format~\cite{Lebedeva2011} based on a conjugate gradient method. Tensor formats and preconditioned low-rank iterations for solving elliptic eigenvalue problems were employed in~\cite{HackbuschKhoromskijSauterTyrtyshnikov2012}. 
Moreover, alternating linear schemes are common techniques for optimization across multiple cores in TT formats~\cite{HoltzRohwedderSchneider2012ALS} and block-TT formats~\cite{DolgovKhoromskijOseledetsSavostyanov2014}. 
Extreme singular values and vectors were estimated also using TT and block-TT formats in~\cite{LeeCichocki2015}. 
Similarly, the DMRG method was an alternating scheme for minimizing the Rayleigh quotient across factors in TT format~\cite{Oseledets2011DMRG}.
Subspace-correction strategies for symmetric eigenvalue problems further combined low-rank tensor representations with eigenspace iterations~\cite{KressnerSteinlechnerUschmajew2014}. 
TT-based inverse iteration and LOBPCG algorithm were used for vibrational spectra~\cite{RakhubaOseledets2016}.
A low-rank Riemannian eigensolver was developed to compute multiple eigenstates of high-dimensional Hamiltonians~\cite{RakhubaNovikovOseledets2019}.
A Riemannian method on the normalized fixed-rank TT manifold was developed for the problems in quantum information theory~\cite{peng2025}.
An inexact subspace represented in low-rank TT tensor formats was used in~\cite{DEKTOR2026286} to solve eigenvalue problems, which was more robust to rank truncation errors.
More recently, block-sparse MPS solvers exploited particle-number conservation and accuracy-adapted ranks for fermionic Schrödinger equations~\cite{BachmayrKramerPfeffer2026}.

\paragraph{Contributions}
In this paper, we consider the set of bounded-rank tensors in the high-dimensional extreme eigenvalue problem, which differs from the existing works. The set of fixed-rank tensors forms a smooth manifold, enabling (Riemannian) optimization methods on manifolds~\cite{AbsilMahonySepulchre+2008,boumal2020introduction}. However, the set of bounded-rank tensors is an algebraic variety containing singular points~\cite{KUTSCHAN2018370}, which poses fundamental geometric difficulty. Consequently, a standard Riemannian algorithm cannot be applied directly to the feasible set of~\eqref{eq:final_EEP}. The smooth bounded-rank parametrization introduced in~\cite{gao2024} constructs a smooth manifold for bounded-rank Tucker and TT tensors, whereas the corresponding algorithm for TT is not concretized. In a similar spirit, we consider a smooth manifold $\mathcal{M}_{\mathbf r}^{\TT}$ and a smooth map $\phi$ for TT tensors such that
\[
  \phi(\mathcal{M}_{\mathbf r}^{\TT})
  =\mathbb{R}^{n_1\times \cdots\times n_d}_{\leq\mathbf r^{\TT}},
\]
which makes it possible to consider the problem in the smooth manifold through this parameterization. Specifically, the Rayleigh--Ritz problem~\eqref{eq:final_EEP} is then solved by minimizing $\rho\circ\phi$ over $\mathcal{M}_{\mathbf r}^{\TT}$. Since the manifold $\mathcal{M}_{\mathbf r}^{\TT}$ is still in a large space, the storage and computation of elements in this space are developed through a TT-cores space $\mathcal{S}_{\mathbf r}^{\TT}$, as summarized in~\Cref{fig:intro}.

\begin{figure}[htbp]
  \centering
  \begin{circuitikz}
        \tikzset{every node/.style={font=\normalsize}}

                \node [anchor=east] at (6.5,6.5) {low-rank tensor space};
        \node (bounded-space) at (7.2,6.5)
      {$\mathbb{R}_{\leq\mathbf r^{\TT}}^{n_1\times\cdots\times n_d}$};
        \node (real-line) [anchor=west] at (11.65,6.5) {$\mathbb{R}$};
    \draw [line width=0.8pt, ->, >=Stealth]
      ([xshift=0.25cm]bounded-space.east) --
      node [midway, above=0.12cm] {Rayleigh--Ritz problem}
      ([xshift=-0.25cm]real-line.west);
    \node at (9.8,6.25) {$\rho$};
            \node [anchor=east] at (6.5,4.25) {parametrization manifold};
    \node (manifold-space) at (7.2,4.25) {$\mathcal{M}_{\mathbf r}^{\TT}$};

            \draw [line width=0.8pt, ->, >=Stealth]
      ([yshift=0.25cm]manifold-space.north) --
      node [midway, left=0.1cm] {$\phi$}
      ([yshift=-0.25cm]bounded-space.south);

            \node [anchor=east] at (6.5,1.0) {TT-cores space};
    \node [anchor=east] at (6.5,0.6) {$(\mathcal U_1,\ldots,\mathcal U_d)$};
    \node (cores-space) at (7.2,1.0) {$\mathcal{S}_{\mathbf r}^{\TT}$};

            \draw [line width=0.8pt, ->, dashed, >=Stealth]
      ([yshift=0.25cm]cores-space.north) --
      node [midway, left=0.1cm, align=center]
        {}
      ([yshift=-0.25cm]manifold-space.south);
    
                    \begin{scope} [xshift=8cm, yshift=1cm, x=0.88cm, y=0.88cm]
            \draw [line width=1pt, dashed, rounded corners=10] (0,-1.4) rectangle (5.8,1.4);

            \draw [line width=1pt, short] (4.0,0.2) -- (5.1,0.2);
      \draw [line width=1pt, short] (0.7,0.2) -- (0.7,-0.7);
      \draw [line width=1pt, short] (5.1,-0.7) -- (5.1,0.2);
            \node at (0.7,0.85) {$\mathcal{U}_1$};
      \draw [line width=1pt, short] (0.7,0.2) -- (1.8,0.2);
      \draw [line width=1pt, short] (4.0,0.2) -- (4.0,-0.7);
      \node at (1.8,0.85) {$\mathcal{U}_2$};
      \node at (4.0,0.85) {$\mathcal{U}_{d-1}$};
      \node at (5.1,0.85) {$\mathcal{U}_d$};
            \node at (4.0,-0.95) {$n_{d-1}$};
      \node at (5.1,-0.95) {$n_d$};
      \node at (1.25,-0.2) {$r_1$};
      \node at (2.25,-0.2) {$r_{2}$};
      \node at (4.55,-0.2) {$r_{d-1}$};
      \node at (0.7,-0.95) {$n_1$};
      \draw [line width=1pt, short] (1.8,0.2) -- (2.5,0.2);
      \draw [line width=1pt, short] (3.3,0.2) -- (4.0,0.2);
      
            \node at (2.9,0.2) {$\cdots$};
      \draw [line width=1pt, short] (1.8,0.2) -- (1.8,-0.7);
      \node at (1.8,-0.95) {$n_{2}$};
      \node at (3.55,-0.2) {$r_{d-2}$};

            \filldraw [line width=1pt] (0.7,0.2) circle (0.12);
      \filldraw [line width=1pt] (1.8,0.2) circle (0.12);
      \draw [fill=white, line width=1pt] (1.71515,0.11515) arc (-135:45:0.12);
      \filldraw [line width=1pt] (4.0,0.2) circle (0.12);
      \draw [fill=white, line width=1pt] (3.91515,0.11515) arc (-135:45:0.12);
      \filldraw [line width=1pt] (5.1,0.2) circle (0.12);
      \draw [fill=white, line width=1pt] (5.01515,0.11515) arc (-135:45:0.12);
    \end{scope}

                \begin{scope}[xshift=8cm, yshift=4.25cm, x=0.88cm, y=0.88cm]
            \draw [line width=1pt, dashed, rounded corners=10] (0,-1.8) rectangle (5.8,1.8);

            \node at (0.97,1.01) {$\mathcal{M}_{\mathbf r}^{\TT}$};

                  \draw [line width=0.9pt, short] (1.93,-1.62) .. controls (2.76,-0.37) and (4.28,-0.23) .. (4.97,-0.65);
      \draw [line width=0.9pt, short] (1.93,-1.62) .. controls (1.66,-0.10) and (1.38,0.59) .. (0.55,-0.23);
      \draw [line width=0.9pt, short] (0.55,-0.23) .. controls (1.80,1.01) and (2.35,1.42) .. (3.87,1.01);
      \draw [line width=0.9pt, short] (3.87,1.01) .. controls (4.70,0.46) and (4.83,0.18) .. (4.97,-0.65);

            \draw [line width=0.9pt, short, fill=white, fill opacity=1]
        (2.21,1.20) -- (1.08,-0.12) -- (3.84,-0.79) -- (4.97,0.54) -- cycle;
      \node at (3.89,-1.06) {$\mathrm{T}_z \mathcal{M}_{\mathbf r}^{\TT}$};

            \draw [line width=0.9pt, ->, >=Stealth] (2.65,0.46) -- (3.51,1.48);
      \node at (4.61,1.37) {$-\nabla(\rho\circ\phi)(z)$};

            \draw [line width=0.9pt, ->, >=Stealth] (2.65,0.46) -- (3.56,0.37);
      \draw [line width=0.9pt, ->, >=Stealth, dashed] (2.65,0.46) .. controls (3.07,0.32) and (3.20,0.18) .. (3.3,-0.30);
            \draw [line width=0.9pt, dashed] (3.51,1.48) -- (3.56,0.37);
      
            \node at (2.38,0.35) {$z$};
      \filldraw [line width=1pt] (2.65,0.46) circle (0.08);
                        
    \end{scope}

  \end{circuitikz}
  \caption{Overview of low-rank tensor parametrization and optimization. It illustrates how the Rayleigh--Ritz problem on the low-rank tensor space $\mathbb{R}_{\leq\mathbf r^{\TT}}^{n_1\times\cdots\times n_d}$ is reformulated as minimizing $\rho\circ\phi$ on a smooth parametrization manifold $\mathcal{M}_{\mathbf r}^{\TT}$. Riemannian optimization is performed through tangent-space projections and retractions, with points and updates represented in TT-cores space $\mathcal{S}_{\mathbf r}^{\TT}$.}
  \label{fig:intro}
\end{figure}

We formulate the high-dimensional extreme eigenvalue problems as Rayleigh--Ritz optimization over bounded-rank tensor space and propose a smooth parametrization of the bounded-rank TT tensor space.
The geometry of parametrized space is developed, including tangent space, projection, retraction, and vector transport.
The Rayleigh--Ritz problem is then reformulated on the parametrization manifold, of which the Riemannian gradient is derived from the Euclidean gradient.
Moreover, the Kronecker-product structure in the problem is employed to simplify the computation of geometric operations.
Based on the above geometric computations, we propose low-rank TT Riemannian gradient descent (LTT-RGD) and low-rank TT Riemannian conjugate gradient (LTT-RCG) algorithms for extreme eigenvalue problems.
With the Kronecker-product structure, the computational complexity of algorithms is analyzed and shown in~\Cref{tab:complexity}.

We perform numerical experiments of high-dimensional eigenvalue problems on harmonic oscillator, Laplace, Schrödinger, layered cluster, and Bose--Einstein model, and compare with existing methods, LOBPCG and PRIMME.
The numerical results show that the computational cost of the proposed methods scales polynomially with the dimension, rank, and mode size of the problem, which is consistent with the theoretical complexity. 
Additionally, the numerical results validate the effectiveness of LTT-RGD and LTT-RCG in high-dimensional problems compared to other methods while achieving the same level of accuracy.
Moreover, the adaptivity of the numerical ranks in iterations demonstrates robustness to the choice of rank parameters, especially when the rank parameters exceed the target rank.
The experiment on the nonlinear Bose--Einstein model shows that LTT-RCG is able to solve the nonlinear eigenvalue problem with low-rank TT representation.

\paragraph{Organization}
The rest of this paper is organized as follows.
A brief review of tensor notation and the TT decomposition is provided in~\Cref{sec:preliminary}. The parametrization manifold of bounded-rank TT tensors is introduced in~\Cref{sec:desing}, as well as the geometric computations required for Riemannian optimization. In~\Cref{sec:rgd}, we reformulate the Rayleigh--Ritz problem on the parametrization manifold and develop the Riemannian gradient descent and conjugate gradient methods, along with computational complexity. Numerical results for high-dimensional extreme eigenvalue problems are reported in~\Cref{sec:experiment}. Finally, we draw conclusions.

\section{Preliminaries} \label{sec:preliminary}
This section introduces the matrix and tensor notation used throughout the
paper and reviews the tensor train decomposition~\cite{Oseledets2011}. Tensor-network diagrams are used to
illustrate the main operations, following the conventions in~\cite{holtz2012}.
Additional background on low-rank tensor formats is available
from~\cite{Uschmajew2020}.

\subsection{Matrix and tensor notation} \label{sec:matrix}
Let $m,n,r$ be positive integers with $r\leq\min\{m,n\}$. The
Stiefel manifold and the general linear group are denoted by
$\St(r,n):=\{\mathbf U\in\mathbb R^{n\times r}:\mathbf U^\top\mathbf U
=\mathbf I_r\}$ and $\GL(n):=\{\mathbf A\in\mathbb R^{n\times n}:
\rank(\mathbf A)=n\}$, respectively. The space of symmetric matrices is denoted by
$\mathbb S^n:=\{\mathbf A\in\mathbb R^{n\times n}:\mathbf A^\top=\mathbf A\}$.
Moreover, the symmetric part of a square matrix is
$\sym(\mathbf A):=\tfrac12(\mathbf A+\mathbf A^\top)$.

The following elementary properties and blockwise operations will be used to construct smooth curves in the TT parameter space and to derive the geometry of the parametrization manifold. First, for $m>n$, the set of full-column-rank matrices
$\{\mathbf A\in\mathbb R^{m\times n}:\rank(\mathbf A)=n\}$ is
open~\cite[Section~3.1.5]{AbsilMahonySepulchre+2008},
which ensures that sufficiently small perturbations preserve the full-rank condition.
We next introduce two operations on partitioned matrices that simplify
subsequent matrix products and reduce the computational cost. Let
$\mathbf A\in\mathbb R^{(mk)\times (nk)}$ be partitioned as
$\mathbf A=[\mathbf A_{ij}]_{i=1,j=1}^{m,n}$, where
$\mathbf A_{ij}\in\mathbb R^{k\times k}$. For a matrix
$\mathbf C\in\mathbb R^{k\times k}$, the \emph{partial trace}
and \emph{partial inner product} are defined by
\[
  \ptr_k(\mathbf A):=\bigl(\tr(\mathbf A_{ij})\bigr)_{i=1,j=1}^{m,n},
  \qquad
  \langle\mathbf A,\mathbf C\rangle^k
  :=\bigl(\langle\mathbf A_{ij},\mathbf C\rangle_{\mathrm{F}}\bigr)_{i=1,j=1}^{m,n}.
\]
These contractions satisfy
\[
  \langle\mathbf A,\mathbf D\otimes\mathbf I_k\rangle_{\mathrm{F}}
  =\langle\ptr_k(\mathbf A),\mathbf D\rangle_{\mathrm{F}},
  \qquad
  \ptr_k\bigl(\mathbf A(\mathbf B\otimes\mathbf C)\bigr)
  =\langle\mathbf A,\mathbf C^\top\rangle^k\mathbf B,
\]
for $\mathbf D\in\mathbb R^{m\times n}$,
$\mathbf B\in\mathbb R^{n\times n}$, and
$\mathbf C\in\mathbb R^{k\times k}$. The first identity follows by expanding the
Frobenius inner product blockwise:
$\langle\mathbf A,\mathbf D\otimes\mathbf I_k\rangle_{\mathrm{F}}
=\sum_{i,j}d_{ij}\tr(\mathbf A_{ij})$.
For the second identity, the $(i,j)$-th entry of the left-hand side is
$\tr(\sum_{\ell=1}^n b_{\ell j}\mathbf A_{i\ell}\mathbf C)
=\sum_{\ell=1}^n b_{\ell j}
\langle\mathbf A_{i\ell},\mathbf C^\top\rangle_{\mathrm{F}}$, which is the
$(i,j)$-th entry of $\langle\mathbf A,\mathbf C^\top\rangle^k\mathbf B$.

The partial trace also preserves positive semidefiniteness. Indeed, if
$\mathbf A\in\mathbb R^{(mk)\times (mk)}$ is positive semidefinite, we can write
$\mathbf A=\mathbf U\mathbf U^\top$ and partition $\mathbf U$ into block rows
$\mathbf U_i\in\mathbb R^{k\times (mk)}$. One has
$\bigl(\ptr_k(\mathbf A)\bigr)_{ij}
=\tr(\mathbf U_i\mathbf U_j^\top)$; hence
$\ptr_k(\mathbf A)$ is a Gram matrix and is therefore positive semidefinite.

A \emph{tensor} is a multidimensional array
$\mathcal X\in\mathbb R^{n_1\times\cdots\times n_d}$. The integer $d$ is
the \emph{order}, and each dimension is called a \emph{mode}. The entry with
multi-index $(i_1,\ldots,i_d)$ is denoted by
$\mathcal X(i_1,\ldots,i_d)$. For tensors $\mathcal X$ and $\mathcal Y$ of
the same size, the Frobenius inner product and norm are
\[
  \langle\mathcal X,\mathcal Y\rangle_{\mathrm{F}}
  :=\sum_{i_1=1}^{n_1}\cdots\sum_{i_d=1}^{n_d}
  \mathcal X(i_1,\ldots,i_d)\mathcal Y(i_1,\ldots,i_d),
  \qquad
  \|\mathcal X\|_{\mathrm{F}}
  :=\langle\mathcal X,\mathcal X\rangle_{\mathrm{F}}^{1/2}.
\]
Unless stated
otherwise, all matrix and tensor inner products and norms are Frobenius inner
products $\langle\cdot,\cdot\rangle_{\mathrm{F}}$ and norms $\|\cdot\|_{\mathrm{F}}$.
A tensor is represented graphically by a node with one edge per mode.
Contracted modes are indicated by connecting the corresponding edges.
\Cref{fig:tensor-graph} shows a third-order tensor, a matrix--vector product,
and a matrix--tensor--matrix product.
\begin{figure}[htbp]
  \centering
    \begingroup
        \let\Large\normalsize\let\LARGE\normalsize\let\huge\normalsize
    \tikzset{every node/.style={font=\normalsize}}
        \begin{circuitikz}[x=0.75cm,y=0.75cm]
      \tikzset{every node/.style={font=\normalsize}}
      \draw [ fill={rgb,255:red,0; green,0; blue,0} ] (14,10.75) circle[radius=0.12cm];
      \draw [line width=1pt, short] (9.25,10.5) -- (10.5,11.25);
      \draw [line width=1pt, short] (9.25,10.5) -- (8,11.25);
      \draw [line width=1pt, short] (9.25,10.5) -- (9.25,9.25);
      \node [font=\Large] at (8.25,11.75) {$n_1$};
      \node [font=\Large] at (11,11) {$n_2$};
      \node [font=\Large] at (9,9) {$n_3$};
      \draw [ fill={rgb,255:red,0; green,0; blue,0} ] (9.25,10.5) circle[radius=0.12cm];
      \draw [ fill={rgb,255:red,0; green,0; blue,0} ] (16,10.75) circle[radius=0.12cm];
      \draw [line width=1pt, short] (12.5,10.75) -- (14,10.75);
      \draw [line width=1pt, short] (14,10.75) -- (16,10.75);
      \node [font=\Large] at (13,11.25) {$n_1$};
      \node [font=\Large] at (15,11.25) {$n_2$};
      \draw [line width=1pt, short] (21,10.75) -- (22.5,10.75);
      \draw [line width=1pt, short] (21,10.75) -- (19.5,10.75);
      \draw [line width=1pt, short] (21,10.75) -- (21,9.5);
      \node [font=\Large] at (20.25,11.25) {$n_1$};
      \node [font=\Large] at (21.75,11.25) {$n_2$};
      \node [font=\Large] at (21,9.25) {$n_3$};
      \draw [ fill={rgb,255:red,0; green,0; blue,0} ] (21,10.75) circle[radius=0.12cm];
      \draw [ fill={rgb,255:red,0; green,0; blue,0} ] (19.5,10.75) circle[radius=0.12cm];
      \draw [ fill={rgb,255:red,0; green,0; blue,0} ] (22.5,10.75) circle[radius=0.12cm];
      \draw [line width=1pt, short] (19.5,10.75) -- (18,10.75);
      \draw [line width=1pt, short] (24,10.75) -- (22.5,10.75);
      \node [font=\Large] at (18.5,11.25) {$m_1$};
      \node [font=\Large] at (23.5,11.25) {$m_2$};
    \end{circuitikz}
    \endgroup
  \caption{Tensor-network diagrams of a third-order tensor, a matrix--vector product, and a matrix--tensor--matrix product.}
  \label{fig:tensor-graph}
\end{figure}

For $\mathcal X\in\mathbb R^{n_1\times\cdots\times n_d}$, let
$N=\prod_{k=1}^d n_k$. The vectorization
$\vecop(\mathcal X)\in\mathbb R^N$ of $\mathcal{X}$ uses the column-major index
$j:=1+\sum_{k=1}^d(i_k-1)\prod_{\ell=1}^{k-1}n_\ell$, and its inverse, called the tensorization, is denoted by $\ten(\cdot)$. The $k$-th unfolding
$\mathbf X^{\langle k\rangle}\in
\mathbb R^{(n_1\cdots n_k)\times(n_{k+1}\cdots n_d)}$ is defined by
$\mathbf X^{\langle k\rangle}(i_1,\ldots,i_k;i_{k+1},\ldots,i_d)
:=\mathcal X(i_1,\ldots,i_d)$, where the semicolon separates the row and
column multi-indices. Following~\cite{KUTSCHAN2018370}, the \emph{left} and
\emph{right matricizations} are respectively 
\begin{equation*}
    \matL(\mathcal X):=\mathbf X^{\langle d-1\rangle},\quad
\R(\mathcal X):=\mathbf X^{\langle1\rangle}.
\end{equation*}

The \emph{tensor train product}~\cite{KUTSCHAN2018370} of
$\mathcal X\in\mathbb R^{n_1\times\cdots\times n_k\times m}$ and
$\mathcal Y\in\mathbb R^{m\times n_{k+1}\times\cdots\times n_d}$ is
$\mathcal X\mathcal Y:=\ten\bigl(\matL(\mathcal X)\R(\mathcal Y)\bigr)
\in\mathbb R^{n_1\times\cdots\times n_d}$.
For matrices $\mathbf A\in\mathbb R^{m\times n}$ and
$\mathbf B\in\mathbb R^{p\times q}$, the \emph{Kronecker product}
$\mathbf A\otimes\mathbf B\in\mathbb R^{mp\times nq}$ is
\[
  \mathbf A\otimes\mathbf B:=\begin{pmatrix}
    a_{11}\mathbf B&a_{12}\mathbf B&\cdots&a_{1n}\mathbf B\\
    a_{21}\mathbf B&a_{22}\mathbf B&\cdots&a_{2n}\mathbf B\\
    \vdots&\vdots&\ddots&\vdots\\
    a_{m1}\mathbf B&a_{m2}\mathbf B&\cdots&a_{mn}\mathbf B
  \end{pmatrix}.
\]

\subsection{Tensor train decomposition}
The TT decomposition was introduced in~\cite{Oseledets2011} and the same format in computational physics
is commonly called a matrix product state.
\begin{definition}[tensor train decomposition]
  The \emph{tensor train} decomposition of
  $\mathcal X\in\mathbb R^{n_1\times\cdots\times n_d}$ is
  \begin{equation*}
      \mathcal X(i_1,\ldots,i_d):=
      \sum_{\ell_1=1}^{r_1}\cdots\sum_{\ell_{d-1}=1}^{r_{d-1}}
      \mathcal U_1(i_1,\ell_1)
      \mathcal U_2(\ell_1,i_2,\ell_2)\cdots
      \mathcal U_{d-1}(\ell_{d-2},i_{d-1},\ell_{d-1})
      \mathcal U_d(\ell_{d-1},i_d),
  \end{equation*}
  where $\mathcal U_k\in\mathbb R^{r_{k-1}\times n_k\times r_k}$,
  $k=1,\ldots,d$, are the \emph{TT cores}, and the positive integers
  $r_0,\ldots,r_d$ satisfy $r_0=r_d=1$.
\end{definition}
The boundary cores $\mathcal U_1$ and $\mathcal U_d$ are matrices when the
singleton rank modes are omitted; all interior cores $\mathcal U_k,\ k=2,\dots,d-1$ are third-order
tensors.
The nested summation can equivalently be written as a matrix product. Define
the $r_{k-1}\times r_k$ matrix slice
$\mathbf U_k(i_k):=\mathcal U_k(:,i_k,:)$. Then
\[
  \mathcal X(i_1,\ldots,i_d)
  =\mathbf U_1(i_1)\mathbf U_2(i_2)\cdots\mathbf U_d(i_d),
  \qquad
  \mathcal X=\mathcal U_1\mathcal U_2\cdots\mathcal U_d,
\]
where the second equality uses the TT-product notation. This representation
is illustrated in~\Cref{fig:tt}.
\begin{figure}[htbp]
  \centering
  \begingroup
    \let\Large\normalsize\let\LARGE\normalsize\let\huge\normalsize
  \tikzset{every node/.style={font=\normalsize}}
    \begin{circuitikz}[x=0.48cm,y=0.48cm]
  \tikzset{every node/.style={font=\normalsize}}
  \draw [ line width=0.8pt ] (1.25,2.25) rectangle (5,-1.5);
  \draw [line width=0.8pt, short] (5,2.25) -- (6.75,3.75);
  \node [font=\LARGE] at (2,3) {};
  \node [font=\LARGE] at (2.75,2.5) {};
  \draw [line width=0.8pt, short] (1.25,2.25) -- (3,3.75);
  \draw [line width=0.8pt, short] (5,-1.5) -- (6.75,0);
  \draw [line width=0.8pt, short] (3,3.75) -- (6.75,3.75);
  \draw [line width=0.8pt, short] (6.75,3.75) -- (6.75,0);
  \draw [ line width=0.8pt ] (15,2) rectangle (18,-1);
  \node [font=\LARGE] at (15.75,2.75) {};
  \node [font=\LARGE] at (16.25,2.25) {};
  \draw [line width=0.8pt, short] (15,2) -- (16.5,3.25);
  \draw [line width=0.8pt, short] (18,2) -- (19.5,3.25);
  \draw [line width=0.8pt, short] (18,-1) -- (19.5,0.25);
  \draw [line width=0.8pt, short] (16.5,3.25) -- (19.5,3.25);
  \draw [line width=0.8pt, short] (19.5,3.25) -- (19.5,0.25);
  \node [font=\huge] at (8.25,1) {=};
  \node [font=\LARGE] at (10,1) {};
  \node [font=\LARGE] at (10.5,0.5) {};
  \draw [line width=0.8pt, short] (9.25,0.25) -- (10.75,1.5);
  \draw [line width=0.8pt, short] (12.25,0.25) -- (13.75,1.5);
  \draw [line width=0.8pt, short] (10.75,1.5) -- (13.75,1.5);
  \node [font=\LARGE] at (24,2.75) {};
  \node [font=\LARGE] at (24.5,2.25) {};
  \node [font=\LARGE] at (12.25,0.25) {};
  \node [font=\LARGE] at (12.75,1) {};
  \draw [line width=0.8pt, short] (9.25,0.25) -- (12.25,0.25);
  \node [font=\LARGE] at (23.5,2.75) {};
  \node [font=\LARGE] at (24,2.25) {};
  \node [font=\LARGE] at (10,3) {$\mathcal{U}_1$};
  \node [font=\LARGE] at (15,3) {$\mathcal{U}_2$};
  \node [font=\LARGE] at (23,3) {$\mathcal{U}_{d-1}$};
  \node [font=\LARGE] at (29,3) {$\mathcal{U}_d$};
  \node [font=\LARGE] at (21.25,1) {$\cdots$};
  \node [font=\LARGE] at (1.5,3.25) {$\mathcal{X}$};
  \draw [line width=0.8pt, dashed] (11.75,1.5) -- (10.25,0.25);
  \draw [line width=0.8pt, dashed] (17.5,3.25) -- (16,2);
  \draw [line width=0.8pt, dashed] (16,2) -- (16,-1);
  \draw [line width=0.8pt, dashed] (17.5,3.25) -- (17.5,0.25);
  \node [font=\LARGE] at (17.25,2) {};
  \node [font=\LARGE] at (17.25,2) {};
  \node [font=\LARGE] at (17.25,2) {};
  \node [font=\LARGE] at (17.25,2) {};
  \node [font=\LARGE] at (17.25,2) {};
  \draw [line width=0.8pt, dashed] (17.5,0.25) -- (16,-1);
  \draw [ line width=0.8pt ] (23,2) rectangle (26,-1);
  \node [font=\LARGE] at (23.75,2.75) {};
  \node [font=\LARGE] at (24.25,2.25) {};
  \draw [line width=0.8pt, short] (23,2) -- (24.5,3.25);
  \draw [line width=0.8pt, short] (26,2) -- (27.5,3.25);
  \draw [line width=0.8pt, short] (26,-1) -- (27.5,0.25);
  \draw [line width=0.8pt, short] (24.5,3.25) -- (27.5,3.25);
  \draw [line width=0.8pt, short] (27.5,3.25) -- (27.5,0.25);
  \draw [line width=0.8pt, dashed] (25.5,3.25) -- (24,2);
  \draw [line width=0.8pt, dashed] (24,2) -- (24,-1);
  \draw [line width=0.8pt, dashed] (25.5,3.25) -- (25.5,0.25);
  \node [font=\LARGE] at (25.25,2.25) {};
  \node [font=\LARGE] at (25.25,2.25) {};
  \node [font=\LARGE] at (25.25,2.25) {};
  \node [font=\LARGE] at (25.25,2) {};
  \node [font=\LARGE] at (25.25,2.25) {};
  \draw [line width=0.8pt, dashed] (25.5,0.25) -- (24,-1);
  \draw [ line width=0.8pt ] (29.75,2.5) rectangle (32.75,-0.5);
  \node [font=\LARGE] at (29.75,2.75) {};
  \node [font=\LARGE] at (30.25,2.25) {};
  \draw [line width=0.8pt, dashed] (30.75,2.5) -- (30.75,-0.5);
  \node [font=\LARGE] at (32,2.5) {};
  \node [font=\LARGE] at (32,2.5) {};
  \node [font=\LARGE] at (32,2.5) {};
  \node [font=\LARGE] at (32,2.5) {};
  \node [font=\LARGE] at (32,2.5) {};
  \node [font=\huge] at (4,-3.5) {$\mathcal{X}(i_1,i_2,\dots,i_d)$};
  \draw [line width=0.9pt, ->, >=Stealth] (3,0.5) -- (3,-2.75);
  \node [font=\huge] at (8.25,-3.5) {=};
  \node [font=\huge] at (10.75,-3.5) {$\mathbf U_1(i_1)$};
  \node [font=\huge] at (16.75,-3.5) {$\mathbf U_2(i_2)$};
  \node [font=\huge] at (24.75,-3.5) {$\mathbf U_{d-1}(i_{d-1})$};
  \node [font=\huge] at (31,-3.5) {$\mathbf U_d(i_d)$};
  \draw [line width=0.9pt, ->, >=Stealth] (10.75,0.5) -- (10.75,-2.75);
  \draw [line width=0.9pt, ->, >=Stealth] (16.75,0.75) -- (16.75,-2.75);
  \draw [line width=0.9pt, ->, >=Stealth] (24.75,0.75) -- (24.75,-2.75);
  \draw [line width=0.9pt, ->, >=Stealth] (30.75,-0.75) -- (30.75,-2.75);
  \end{circuitikz}
  \endgroup
  \caption{Tensor train decomposition and the corresponding matrix-slice representation.}
  \label{fig:tt}
\end{figure}

The TT rank of tensor is determined by the ranks of the $k$-th unfoldings.
The \emph{TT rank} of $\mathcal X$ is the vector
  \begin{equation*}
    \rankTT(\mathcal X)
    :=\bigl(1,\rank(\mathbf X^{\langle1\rangle}),\ldots,
    \rank(\mathbf X^{\langle d-1\rangle}),1\bigr)
    =(r_0,r_1,\ldots,r_{d-1},r_d),
  \end{equation*}
with $r_0=r_d=1$.
The full-rank conditions for a minimal TT decomposition
imply the admissibility bounds~\cite[Sections~2.4--2.5 and
Lemma~3.3]{holtz2012}
\begin{equation}\label{eq:tt-rank-admissibility}
  r_k\leq\min\{r_{k-1}n_k,n_{k+1}r_{k+1}\},
  \qquad k=1,\ldots,d-1.
\end{equation}

The \emph{left and right interface matrices}
$\mathbf X_{\leq k}\in\mathbb R^{(n_1\cdots n_k)\times r_k}$ and
$\mathbf X_{\geq k+1}\in
\mathbb R^{(n_{k+1}\cdots n_d)\times r_k}$ collect the products of the TT
cores to the left and right of index $k$, respectively, and are defined by
\[
  \begin{aligned}
    \mathbf X_{\leq k}(i_1,\ldots,i_k;\,\cdot\,)
    &:=\mathbf U_1(i_1)\cdots\mathbf U_k(i_k),\\
    \mathbf X_{\geq k+1}(i_{k+1},\ldots,i_d;\,\cdot\,)
    &:=\bigl(\mathbf U_{k+1}(i_{k+1})\cdots\mathbf U_d(i_d)\bigr)^\top.
  \end{aligned}
\]
It holds that 
\[
  \begin{aligned}
    \mathbf X_{\leq k}&=(\mathbf I_{n_k}\otimes\mathbf X_{\leq k-1})\matL(\mathcal U_k),\\
    \mathbf X_{\geq k+1}&=(\mathbf X_{\geq k+2}\otimes\mathbf I_{n_{k+1}})\R(\mathcal U_{k+1})^\top,
  \end{aligned}
\]
with $\mathbf X_{\leq1}=\matL(\mathcal U_1)$ and
$\mathbf X_{\geq d}=\R(\mathcal U_d)^\top$. Moreover,
$\mathbf X^{\langle k\rangle}=\mathbf X^{}_{\leq k}\mathbf X_{\geq k+1}^\top$.
The corresponding tensor-network representation is shown
in~\Cref{fig:graph-tt}.
\begin{figure}[htbp]
  \centering
    \begingroup
        \let\Large\normalsize\let\LARGE\normalsize\let\huge\normalsize
    \tikzset{every node/.style={font=\normalsize}}
        \begin{circuitikz}[x=0.78cm,y=0.78cm]
      \tikzset{every node/.style={font=\normalsize}}
      \draw [line width=1pt, short] (10,5.75) -- (11.75,5.75);
      \draw [line width=1pt, short] (-1.75,5.75) -- (-1.75,4.25);
      \node [font=\LARGE] at (8.75,5.25) {};
      \draw [ fill={rgb,255:red,0; green,0; blue,0} ] (6.25,5.75) circle[radius=0.12cm];
      \draw [ fill={rgb,255:red,0; green,0; blue,0} ] (3.75,5.75) circle[radius=0.12cm];
      \draw [ fill={rgb,255:red,0; green,0; blue,0} ] (11.75,5.75) circle[radius=0.12cm];
      \draw [line width=1pt, short] (11.75,4.25) -- (11.75,5.75);
      \node [font=\Large] at (-1.75,6.5) {$\mathcal{U}_1$};
      \node [font=\LARGE] at (11.5,5.25) {};
      \draw [line width=1pt, short] (-1.75,5.75) -- (0,5.75);
      \draw [ fill={rgb,255:red,0; green,0; blue,0} ] (-1.75,5.75) circle[radius=0.12cm];
      \node [font=\LARGE] at (-1.5,4.25) {};
      \draw [line width=1pt, short] (0,5.75) -- (0,4.25);
      \draw [line width=1pt, short] (10,5.75) -- (10,4.25);
      \node [font=\Large] at (0,6.5) {$\mathcal{U}_2$};
      \node [font=\Large] at (10,6.5) {$\mathcal{U}_{d-1}$};
      \node [font=\Large] at (11.75,6.5) {$\mathcal{U}_d$};
      \node [font=\Large] at (-1.75,3.75) {$n_1$};
      \node [font=\Large] at (0,3.75) {$n_2$};
      \node [font=\Large] at (10,3.75) {$n_{d-1}$};
      \node [font=\Large] at (11.75,3.75) {$n_d$};
      \node [font=\Large] at (-0.75,5.25) {$r_1$};
      \node [font=\Large] at (1,5.25) {$r_2$};
      \node [font=\Large] at (7,5.25) {$r_{k+1}$};
      \node [font=\Large] at (11,5.25) {$r_{d-1}$};
      \node [font=\Large] at (2,5.75) {$\cdots$};
      \draw [ line width=1pt , rounded corners = 14.4, dashed] (-2.5,7.25) rectangle  (4.5,4.5);
      \draw [ line width=1pt , rounded corners = 14.4, dashed] (5.5,7.25) rectangle  (12.5,4.5);
      \node [font=\Large] at (-3.5,6.5) {$\mathbf X_{\leq k}$};
      \node [font=\Large] at (13.75,6.5) {$\mathbf X_{\geq k+1}$};
      \draw [line width=1pt, short] (3.75,5.75) -- (6.25,5.75);
      \node [font=\LARGE] at (5.75,5.25) {};
      \draw [line width=1pt, short] (2.75,5.75) -- (3.75,5.75);
      \draw [line width=1pt, short] (0,5.75) -- (1.25,5.75);
      \draw [ fill={rgb,255:red,0; green,0; blue,0} ] (0,5.75) circle[radius=0.12cm];
      \node [font=\LARGE] at (0.25,4.25) {};
      \draw [line width=1pt, short] (3.75,5.75) -- (3.75,4.25);
      \node [font=\Large] at (3.75,3.75) {$n_k$};
      \draw [line width=1pt, short] (6.25,5.75) -- (7.25,5.75);
      \draw [line width=1pt, short] (8.75,5.75) -- (10,5.75);
      \draw [ fill={rgb,255:red,0; green,0; blue,0} ] (10,5.75) circle[radius=0.12cm];
      \node [font=\Large] at (8,5.75) {$\cdots$};
      \draw [line width=1pt, short] (6.25,5.75) -- (6.25,4.25);
      \node [font=\Large] at (6.25,3.75) {$n_{k+1}$};
      \node [font=\Large] at (3.75,6.5) {$\mathcal{U}_k$};
      \node [font=\Large] at (6.25,6.5) {$\mathcal{U}_{k+1}$};
      \node [font=\Large] at (9,5.25) {$r_{d-2}$};
      \node [font=\Large] at (3,5.25) {$r_{k-1}$};
      \node [font=\Large] at (5,5.25) {$r_{k}$};
      \end{circuitikz}
    \endgroup
  \caption{TT decomposition with the left and right interface matrices obtained by grouping modes $1,\ldots,k$ and $k+1,\ldots,d$, respectively.}
  \label{fig:graph-tt}
\end{figure}

\section{Geometry of low-rank tensor parametrization}
\label{sec:desing}

This section develops the geometric computations required by the algorithms.
We consider the smooth parametrization of the bounded-rank TT tensor space. 
Then the geometry of the parametrized space is developed for Riemannian optimization, including tangent space, projection, retraction, and vector transport.

Specifically, the high-dimensional extreme eigenvalue
problem~\eqref{eq:final_EEP} is an instance of the following optimization
problem with a smooth objective
$f:\mathbb R^{n_1\times\cdots\times n_d}\to\mathbb R$:
\begin{equation}\label{eq:objective}
  \begin{aligned}
    \min f(\mathcal{X}), \quad \text{s.\,t.} \quad
  \mathcal{X}\in\mathbb{R}^{n_1\times \cdots\times n_d}_{\leq\mathbf r^{\TT}},
  \end{aligned}
\end{equation}
where $\mathbf r^{\TT}=(r_0,r_1,\ldots,r_d)$ is the rank parameter.
When no ambiguity arises, we omit the superscript $\TT$ and write the rank
parameter simply as $\mathbf r$.
Since the set $\mathbb{R}_{\leq\mathbf r^{\TT}}^{n_{1}\times\cdots\times{n_d}}$ is not a smooth manifold, Riemannian optimization on manifolds is not applicable. 
The proposed parametrization treats this nonsmooth set as the image of a smooth manifold. Related constructions were studied in~\cite{Valentin2018,Quentin2024,Yang2025,Duan2026}, including the parametrization of bounded-rank Tucker varieties in~\cite{gao2024},
and other low-rank tensor formulations~\cite{Pippan2010,UschmajewVandereycken2013,Dong2022CP,Gao2024TR,gaoLowrankOptimizationTucker2025}.

Through the lens of a smooth parametrization, problem~\eqref{eq:objective} on a nonsmooth set is transformed into a problem on a smooth manifold, enabling Riemannian optimization methods. Optimization on manifolds requires the geometric information from the smooth manifold; see an illustration in~\Cref{fig:intro}. First, the explicit representation of points on the parametrization manifold and vectors in the tangent space is derived, which is the basis for the geometric computations. Moreover, the projection from ambient space onto tangent space gives the descent direction of the objective function using its Euclidean gradient, and allows the computation of vector transport employed in the conjugate gradient method. Finally, in order to search on the manifold, the retraction map is necessary to map the tangent vector back to the manifold.

\subsection{Parametrization manifold}\label{sec:parametrization}

To address the nonsmoothness of the bounded-rank tensor space, slack
variables are introduced to lift the original variables to the
higher-dimensional parametrization manifold. With additional slack
variables, singular points with non-maximal TT ranks in the original space
are represented by regular points on the parametrization manifold~\cite{Valentin2018,Quentin2024}.
The bounded-rank tensor space is then recovered by projecting this manifold
into its tensor component. This construction is subsequently extended to
low-rank Tucker tensor spaces~\cite{gao2024}, motivating us to adopt a similar
approach for the TT format.

Specifically, we use the orthogonal projectors
into the complements of the right-interface ranges as slack variables. These
projectors retain the information lost at singular points. Accordingly, we
define the parametrization manifold by
\begin{equation*}
  \mathcal{M}_{\mathbf r}^{\TT}:=\left\{ \left(\mathcal{X},\mathbf P_1,\dots,\mathbf P_{d-1}\right):\
  \begin{aligned}
        &\mathcal{X}=\mathcal{U}_1 \cdots \mathcal{U}_d, \quad\left(\mathcal{U}_1,\dots,\mathcal{U}_d\right) \in \mathcal{S}_{\mathbf r}^{\TT}, \\
        &  \mathbf P_k=\mathbf I_{n_{k+1}\cdots n_d}
  -\mathbf X_{\geq k+1}
  (\mathbf X_{\geq k+1}^\top\mathbf X^{}_{\geq k+1})^{-1}
  \mathbf X_{\geq k+1}^\top,\quad k=1,\dots,d-1\,
  \end{aligned}
  \right\},
\end{equation*}
where
\begin{equation*}
  \mathcal{S}_{\mathbf r}^{\TT}:=
  \left\{ \left(\mathcal{U}_1,\dots,\mathcal{U}_d\right):
  \begin{aligned}
    &\, \mathcal{U}_k\in\mathbb{R}^{r_{k-1}\times{n_k}\times{r_k}}, & k=1,\dots,d\, \\
    &\rank\bigl(\R(\mathcal U_k)\bigr)=r_{k-1},& k=2,\dots,d\,
  \end{aligned}
  \right\}
\end{equation*}
is the TT-cores space. The full-row-rank conditions on the right
matricizations imply that all right interface matrices
$\mathbf X_{\geq k+1}$ have full-column-rank; hence the projectors
$\mathbf P_k$ are well defined. For convenience, let
\begin{equation*}
      {\widehat{\mathcal{M}}}_{\mathbf r}^{\TT}:=\mathbb{R}^{n_{1}\times\cdots\times n_d} \times \mathbb S^{n_2\cdots n_d} \times \cdots \times \mathbb S^{n_d},\quad   {\widehat{\mathcal{S}}}_{\mathbf r}^{\TT}:=\mathbb{R}^{{r_0}\times{n_1}\times{r_1}} \times \cdots \times \mathbb{R}^{{r_{d-1}}\times{n_d}\times{r_d}}
\end{equation*}
be the ambient spaces of the parametrization manifold
$\mathcal{M}_{\mathbf r}^{\TT}$ and the TT-cores space
$\mathcal S_{\mathbf r}^{\TT}$, respectively.
The parametrization map $\phi$ is defined as the projection to the first component
\begin{equation*}
  \phi
  \colon{\widehat{\mathcal{M}}}_{\mathbf r}^{\TT}\rightarrow
  \mathbb{R}^{n_{1}\times\cdots\times{n_d}}_{\leq\mathbf r^{\TT}}
  ,\qquad
  \left(\mathcal{X},\mathbf P_1,\dots,\mathbf P_{d-1}\right)\mapsto\mathcal{X}.
\end{equation*}
Moreover, the map $\psi$ sends the TT cores
$\left(\mathcal{U}_1,\dots,\mathcal{U}_d\right)$ to the tuple
$(\mathcal X,\mathbf P_1,\dots,\mathbf P_{d-1})$ and is smooth.
The resulting relationship among the TT-cores space, the parametrization
manifold, and the ambient spaces is illustrated in~\Cref{fig:spaces}.

\begin{figure}[htbp]
  \centering
  \begin{circuitikz}[x=0.72cm,y=0.72cm]
    \tikzset{every node/.style={font=\normalsize}}

        \node (low-rank-space) at (7.6,4.4)
      {$\mathbb{R}_{\leq\mathbf r^{\TT}}^{n_1\times\cdots\times n_d}$};
    \node at (8.8,4.4) {$\subseteq$};
    \node [anchor=west] at (9.4,4.4)
      {$\mathbb{R}^{n_1\times\cdots\times n_d}$};

        \node (parametrization-manifold) at (7.6,2.2)
      {$\mathcal M_{\mathbf r}^{\TT}$};
    \node at (8.8,2.2) {$\subseteq$};
    \node [anchor=west] at (9.4,1.85)
      {$\begin{aligned}
        \widehat{\mathcal M}_{\mathbf r}^{\TT}
        &={}\mathbb R^{n_1\times\cdots\times n_d}
        \times\mathbb S^{n_2\cdots n_d}\times\cdots\times\mathbb S^{n_d}\\[-0.1em]
        &\quad\quad\quad (\mathcal X,\mathbf P_1,\ldots,\mathbf P_{d-1})
      \end{aligned}$};

        \node (cores-space) at (7.6,0.0) {$\mathcal S_{\mathbf r}^{\TT}$};
    \node at (8.8,0.0) {$\subseteq$};
    \node [anchor=west] at (9.4,-0.35)
      {$\begin{aligned}
        \widehat{\mathcal S}_{\mathbf r}^{\TT}
        &={}\mathbb R^{r_0\times n_1\times r_1}\times\cdots\times
        \mathbb R^{r_{d-1}\times n_d\times r_d}\\[-0.1em]
        &\quad\quad\quad\ \  (\mathcal U_1,\ldots,\mathcal U_d)
      \end{aligned}$};

        \draw [line width=0.8pt, ->, >=Stealth]
      ([yshift=0.22cm]parametrization-manifold.north) --
      node [midway, left=0.08cm] {$\phi$}
      ([yshift=-0.22cm]low-rank-space.south);
    \draw [line width=0.8pt, dashed, ->, >=Stealth]
      ([yshift=0.22cm]cores-space.north) --
      node [midway, left=0.08cm] {$\psi$}
      ([yshift=-0.22cm]parametrization-manifold.south);
  \end{circuitikz}
  \caption{Relationship among the low-rank tensor space, the parametrization
  manifold, and the TT-cores space.}
  \label{fig:spaces}
\end{figure}

To ensure that the parametrization covers the entire bounded-rank TT space, the map $\phi$ must be surjective. We establish this property by constructing elements on the parametrization
manifold for any tensor in the bounded-rank TT space through enlarging TT cores.
\begin{proposition}\label{thm:surjectivity}
  Suppose that the rank parameter
  $\mathbf r^{\TT}=(r_0,r_1,\ldots,r_d)$, $r_0=r_d=1$, satisfies admissibility bounds~\eqref{eq:tt-rank-admissibility}. It holds that 
  \[
    \phi(\mathcal M_{\mathbf r}^{\TT})
    =\mathbb R^{n_1\times\cdots\times n_d}_{\leq\mathbf r^{\TT}}.
  \]
\end{proposition}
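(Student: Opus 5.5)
The inclusion $\phi(\mathcal M_{\mathbf r}^{\TT})\subseteq\mathbb R^{n_1\times\cdots\times n_d}_{\leq\mathbf r^{\TT}}$ is immediate. For a point $(\mathcal X,\mathbf P_1,\ldots,\mathbf P_{d-1})\in\mathcal M_{\mathbf r}^{\TT}$ we have $\mathcal X=\mathcal U_1\cdots\mathcal U_d$ with $\mathcal U_k\in\mathbb R^{r_{k-1}\times n_k\times r_k}$. Hence $\mathbf X^{\langle k\rangle}=\mathbf X_{\leq k}\mathbf X_{\geq k+1}^\top$ factors through an inner dimension $r_k$, so $\rank(\mathbf X^{\langle k\rangle})\leq r_k$ for every $k$. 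The real work is the reverse inclusion. Given $\mathcal X$ with $\rankTT(\mathcal X)=\mathbf s=(s_0,\ldots,s_d)\leq\mathbf r$, I will construct cores $(\mathcal U_1,\ldots,\mathcal U_d)\in\mathcal S_{\mathbf r}^{\TT}$ with $\mathcal U_1\cdots\mathcal U_d=\mathcal X$. Then $\psi(\mathcal U_1,\ldots,\mathcal U_d)$ is a point of $\mathcal M_{\mathbf r}^{\TT}$ whose image under $\phi$ is $\mathcal X$.

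First, I take a minimal TT decomposition $\mathcal X=\mathcal V_1\cdots\mathcal V_d$ with $\mathcal V_k\in\mathbb R^{s_{k-1}\times n_k\times s_k}$. I choose it right-orthogonal, i.e.\ obtained by successive QR/SVD sweeps from right to left, so that $\R(\mathcal V_k)$ has orthonormal, in particular linearly independent, rows for $k=2,\ldots,d$. Next, I enlarge each core by block padding. Index the left rank mode of $\mathcal U_k$ by $i\in\{1,\ldots,r_{k-1}\}$ and the right mode by $j\in\{1,\ldots,r_k\}$. I set
\[
\mathcal U_k(i,:,j)=\mathcal V_k(i,:,j)\ \ (i\leq s_{k-1},\,j\leq s_k),\qquad
\mathcal U_k(i,:,j)=0\ \ (i\leq s_{k-1},\,j>s_k),
\]
and I fill the rows $i>s_{k-1}$ with a free block $\mathbf W_k\in\mathbb R^{(r_{k-1}-s_{k-1})\times n_kr_k}$ to be chosen. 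For $k=1$ there are no extra rows, since $r_0=s_0=1$. For $k=d$ there are no extra columns, since $r_d=s_d=1$.

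Then I verify the two required properties. \emph{Product:} by induction on $k$, using $\mathbf X_{\leq k}=(\mathbf I_{n_k}\otimes\mathbf X_{\leq k-1})\matL(\mathcal U_k)$, the left interface matrix of the padded train is $[\mathbf V_{\leq k},\,\mathbf 0]$. The reason is that the zero top-right block of $\mathcal U_k$ kills the extra columns, and the extra rows $\mathbf W_k$ only ever multiply zero columns coming from the left. Hence the rows $\mathbf W_k$ never contribute, and $\mathcal U_1\cdots\mathcal U_d=\mathcal V_1\cdots\mathcal V_d=\mathcal X$. \emph{Rank:} the top $s_{k-1}$ rows of $\R(\mathcal U_k)$ are the rows of $\R(\mathcal V_k)$ padded with zeros, so they are linearly independent. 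Since $\mathbf W_k$ is unconstrained, I choose its rows to complete these to $r_{k-1}$ linearly independent vectors in $\mathbb R^{n_kr_k}$, which gives $\rank(\R(\mathcal U_k))=r_{k-1}$. This completion is possible exactly when $r_{k-1}\leq n_kr_k$, and that is the admissibility bound~\eqref{eq:tt-rank-admissibility} at index $k-1$ (for $k=d$ it reads $r_{d-1}\leq n_d$).

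The main obstacle is this last step: making sure the padding simultaneously preserves the tensor and achieves full row rank of every right matricization. Both points hinge on the placement of the zero block, together with the admissibility inequality supplying enough room in $\mathbb R^{n_kr_k}$. Once this holds, the full-column-rank property of all $\mathbf X_{\geq k+1}$ follows from the observation preceding the proposition, so the projectors $\mathbf P_k$ are well defined and the constructed tuple lies in $\mathcal M_{\mathbf r}^{\TT}$.
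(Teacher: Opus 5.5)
Your proposal is correct and follows essentially the same construction as the paper. Both arguments take a right-orthogonal minimal TT decomposition, zero-pad each core along its third mode, and fill the extra first-mode slices so that $\R(\mathcal U_k)$ has full row rank, which is possible by the admissibility bound $r_{k-1}\leq n_kr_k$. The only difference is that the paper fills these slices orthonormally, so the enlarged cores remain right-orthogonal, whereas you require only linear independence; that suffices for membership in $\mathcal S_{\mathbf r}^{\TT}$.
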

\begin{proof}
  Every tensor represented by cores of sizes
  $r_{k-1}\times n_k\times r_k$ has TT rank bounded by $\mathbf r^{\TT}$,
  which proves the inclusion from left to right.
  Conversely, take
  $\mathcal Y\in
  \mathbb R^{n_1\times\cdots\times n_d}_{\leq\mathbf r^{\TT}}$, and choose a right-orthogonal TT decomposition
  $\mathcal Y=\mathcal V_1\cdots\mathcal V_d$ with the TT rank
  $\widehat{\mathbf r}^{\TT}\leq\mathbf r^{\TT}$. Enlarge each core to
  $\widetilde{\mathcal V}_k\in
  \mathbb R^{r_{k-1}\times n_k\times r_k}$ as follows. For the first core, we
  retain $\mathcal V_1$ in the first $\widehat r_1$ third-mode slices and
  set the remaining slices to zero. For $k=2,\ldots,d$, first extend
  $\mathcal V_k$ with zeros along its third mode to size
  $\widehat r_{k-1}\times n_k\times r_k$. Retain this padded core in the
  first $\widehat r_{k-1}$ first-mode slices, and fill the remaining
  $r_{k-1}-\widehat r_{k-1}$ slices so that the right-matricization rows
  form an orthonormal set orthogonal to the retained rows. For the last
  core, only the first mode is enlarged since $r_d=\widehat r_d=1$.
  Direct calculation gives
  \[
    \rank\bigl(\R(\widetilde{\mathcal V}_k)\bigr)=r_{k-1},
    \qquad k=2,\ldots,d,
    \qquad
    \widetilde{\mathcal V}_1\cdots\widetilde{\mathcal V}_d
    =\mathcal V_1\cdots\mathcal V_d=\mathcal Y.
  \]
  Hence the new cores belong to $\mathcal S_{\mathbf r}^{\TT}$ and represent
  the same tensor $\mathcal Y$. Thus the claimed surjectivity holds.
\end{proof}
Then the quotient manifold theorem~\cite{AbsilMahonySepulchre+2008} is used to show that the parametrization manifold is a smooth manifold.
A TT tensor is unchanged under
invertible transformations of its internal cores, so different core tuples
can determine the same tensor and right-interface projectors. To remove this
nonuniqueness, consider the gauge group
$\mathcal G:=\GL(r_1)\times\cdots\times\GL(r_{d-1})$ and the equivalence
relation $\sim$ induced by its action on $\mathcal S_{\mathbf r}^{\TT}$.
As proved in~\cite{gao2024}, this action is smooth, free, and proper.
The quotient-manifold theorem therefore applies and makes
$\mathcal S_{\mathbf r}^{\TT}/\!\sim$ a smooth manifold. Consequently,
$\mathcal M_{\mathbf r}^{\TT}$ is an embedded submanifold of
$\widehat{\mathcal M}_{\mathbf r}^{\TT}$, with dimension
\[
  \dim\mathcal M_{\mathbf r}^{\TT}
  =\dim\mathcal S_{\mathbf r}^{\TT}-\dim\mathcal G
  =\sum_{k=1}^d r_{k-1}n_kr_k-\sum_{k=1}^{d-1}r_k^2.
\]

\subsection{Tangent space}

We obtain the tangent space of $\mathcal{M}_{\mathbf r}^{\TT}$ by
differentiating smooth curves in the TT-cores space. The resulting core
representations are nonunique because different TT cores can represent the same
tensor and projector. We therefore impose gauge conditions that remove the freedom of cores and yield a unique representation of each tangent vector.

Let $z=(\mathcal{X},\mathbf P_1,\dots,\mathbf P_{d-1})\in
\mathcal{M}_{\mathbf r}^{\TT}$ be represented by
$(\mathcal{U}_1,\dots,\mathcal{U}_d)\in\mathcal{S}_{\mathbf r}^{\TT}$, and let
$(\delta\mathcal U_1,\dots,\delta\mathcal U_d)\in
\widehat{\mathcal{S}}_{\mathbf r}^{\TT}$ be an arbitrary tuple of tensors. 
Consider the smooth curve
$c:\mathbb{R}\to\widehat{\mathcal{S}}_{\mathbf r}^{\TT}$ given by
\begin{equation*}
c\left(t\right):=\left(\mathcal{U}_1,\dots,\mathcal{U}_d\right)+t\left(\delta\mathcal U_1,\dots,\delta\mathcal U_d\right).
\end{equation*}
By the openness of the full-rank matrix set discussed in~\Cref{sec:matrix}, there exists $T>0$ such that, for every $|t|<T$ and $k=2,\dots,d$, the matrix $\R(\mathcal{U}_k)+t\R(\delta\mathcal U_k)$ remains full-row-rank.
Consequently, $c(t)\in\mathcal{S}_{\mathbf r}^{\TT}$ and $\psi\circ c$ is a smooth curve in $\mathcal{M}_{\mathbf r}^{\TT}$.
We use the prefix $\delta$ to denote derivatives along this curve
at $t=0$. In particular, $\delta\mathcal U_k$,
$\delta\mathcal X$, $\delta\mathbf X_{\geq k}$, and
$\delta\mathbf P_k$ denote the derivatives of the TT cores,
the represented tensor, the right-interface matrices, and the
projectors, respectively. The resulting tangent vector is
$\xi=(\delta\mathcal X,\delta\mathbf P_1,\ldots,\delta\mathbf P_{d-1})$.
When several tangent directions are involved, we add a subscript,
as in $\delta_\xi\mathcal U_k$ and
$\delta_\xi\mathbf X_{\geq k}$, to specify the corresponding direction.
At $t=0$, the derivatives of the TT cores are $\delta\mathcal U_1,\ldots,\delta\mathcal U_d$, and the derivative of $\psi\circ c$ gives the corresponding tangent vector.
The core representation is not unique due to the selection freedom of vector $(\delta\mathcal U_1,\ldots,\delta\mathcal U_d)$.
We select a unique representative and simplify the derivatives of the interface matrices by imposing the
\emph{gauge conditions}:
\begin{equation}\label{eq:gauging}
  \R{\left(\mathcal{U}_k \right)}\R{{\left(\mathcal{U}_k \right)}^\top}=\mathbf I_{r_{k-1}},\quad \R{\left(\mathcal{U}_k \right)}\R{{\left(\delta\mathcal U_k \right)}^\top}=\mathbf 0,\qquad k=2,\dots,d,
\end{equation}
or equivalently, we have the following interface form
\begin{equation}\label{eq:gauging-interface}
    {\mathbf X_{\geq k}^\top}{\mathbf X^{}_{\geq k}}=\mathbf I_{r_{k-1}}, \quad
    {\mathbf X_{\geq k}^\top}\delta{\mathbf X^{}_{\geq k}}=\mathbf 0,\qquad k=2,\dots,d.
\end{equation}
Note that it requires the TT cores to be right-orthogonal and the tensors $\delta\mathcal U_k$ to be orthogonal to the corresponding gauge directions.
Similar gauge conditions are standard for tangent representations on
fixed-rank TT manifolds; see~\cite[Theorem~4.2 and Remark~4.3]{holtz2012} and
\cite[Section~3.1]{steinlechner2016}.

The second interface identity in~\eqref{eq:gauging-interface} implies that
$\delta\mathbf X_{\geq k}$ lies in the orthogonal complement of the
range of $\mathbf X_{\geq k}$. Therefore,
${\left({\mathbf I-{\mathbf X_{\geq k}}{\mathbf X_{\geq k}^\dagger}}\right)}
\delta{\mathbf X_{\geq k}}=\delta{\mathbf X_{\geq k}}$ for
$k=2,\dots,d$. Thus, the following proposition is obtained.
\begin{proposition}[tangent space] \label{thm:tangent-space}
  Let $z=(\mathcal X,\mathbf P_1,\ldots,\mathbf P_{d-1})\in
  \mathcal M_{\mathbf r}^{\TT}$ be represented by right-orthogonal cores
  $(\mathcal U_1,\ldots,\mathcal U_d)\in\mathcal S_{\mathbf r}^{\TT}$.
  Then every tangent vector at $z$ admits a core representation
  $(\delta\mathcal U_1,\ldots,\delta\mathcal U_d)$ satisfying the gauge
  conditions~\eqref{eq:gauging}, which has the form
  $\left(\delta\mathcal{X},\delta\mathbf P_1,\dots,
  \delta\mathbf P_{d-1}\right)$, where
  \begin{align*}
    \delta\mathcal{X}
    &=\sum_{k=1}^d \mathcal{U}_1 \cdots \mathcal{U}_{k-1}
      \delta\mathcal U_k \mathcal{U}_{k+1} \cdots \mathcal{U}_d, \\
    \delta\mathbf P_k
    &=-2\sym \left(\delta \mathbf X^{}_{\geq k+1}
      \mathbf X_{\geq k+1}^\top\right),
      \qquad k=1,\dots,d-1,
  \end{align*}
  with $\delta \mathbf X_{\geq d} =\R \left(\delta\mathcal U_d \right)^\top$ and
$    \delta \mathbf X_{\geq k}
    =\left(\mathbf X_{\geq k+1}\otimes \mathbf I_{n_k}\right)
      \R \left(\delta\mathcal U_k\right)^\top+\left(\delta \mathbf X_{\geq k+1}\otimes \mathbf I_{n_k}\right)
      \R \left(\mathcal{U}_k\right)^\top$, $k=d-1,\dots,2$.
\end{proposition}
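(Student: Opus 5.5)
Every tangent vector at $z$ is the velocity of a curve $\psi\circ c$ with $c(t)=(\mathcal U_1,\dots,\mathcal U_d)+t(\delta\mathcal U_1,\dots,\delta\mathcal U_d)$. I will prove three things in turn: such velocities exhaust $\mathrm T_z\mathcal M_{\mathbf r}^{\TT}$; the core tuple can be gauged to satisfy~\eqref{eq:gauging}; and the velocity then has the stated closed form.

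\emph{Exhaustiveness.} The map $\psi$ is smooth, and its image is $\mathcal M_{\mathbf r}^{\TT}$. It is invariant under the gauge group $\mathcal G$, so it factors through the quotient $\mathcal S_{\mathbf r}^{\TT}/\!\sim$. The induced map is the diffeomorphism onto the embedded submanifold described after \Cref{thm:surjectivity}, so the quotient map and $\psi$ are submersions onto $\mathcal M_{\mathbf r}^{\TT}$. Hence $\mathrm D\psi(\mathcal U_1,\dots,\mathcal U_d)$ maps $\widehat{\mathcal S}_{\mathbf r}^{\TT}$ onto $\mathrm T_z\mathcal M_{\mathbf r}^{\TT}$. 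Since straight-line curves realize every direction $(\delta\mathcal U_k)$, every tangent vector equals $\frac{d}{dt}\psi(c(t))|_{t=0}$ for some tuple.

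\emph{Gauging.} Right-orthogonality of the cores is assumed; a right-orthogonalization sweep is itself a gauge action, so it does not change $z$. The kernel of $\mathrm D\psi$ contains the tangent space of the $\mathcal G$-orbit, which is spanned by $\delta\mathcal U_{k}\leftarrow\mathcal U_k\mathbf A$, $\delta\mathcal U_{k+1}\leftarrow-\mathbf A\mathcal U_{k+1}$ for $\mathbf A\in\mathbb R^{r_k\times r_k}$. I will sweep from $k=d$ down to $k=2$. At step $k$, split $\R(\delta\mathcal U_k)=\mathbf B\R(\mathcal U_k)+\mathbf W$ with $\mathbf B=\R(\delta\mathcal U_k)\R(\mathcal U_k)^\top$ and $\R(\mathcal U_k)\mathbf W^\top=\mathbf 0$. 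Then I add the orbit direction with $\mathbf A=\mathbf B\in\mathbb R^{r_{k-1}\times r_{k-1}}$: this replaces $\delta\mathcal U_{k-1}$ by $\delta\mathcal U_{k-1}+\mathcal U_{k-1}\mathbf B$ and removes $\mathbf B$ from $\R(\delta\mathcal U_k)$. This leaves $\mathrm D\psi$ applied to the tuple unchanged. It affects only core $k-1$, which is treated at the next step, so the sweep terminates with~\eqref{eq:gauging} holding for all $k=2,\dots,d$. This step is the main obstacle, because one has to check that the sweep creates no new violations. The downward order handles this, and the boundary core $\mathcal U_1$ is ungauged, which absorbs the final correction.

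\emph{Formulas.} By the product rule, the derivative of $\mathcal X(t)=\mathcal U_1(t)\cdots\mathcal U_d(t)$ at $t=0$ is the stated sum. Differentiating the recursion $\mathbf X_{\geq k}=(\mathbf X_{\geq k+1}\otimes\mathbf I_{n_k})\R(\mathcal U_k)^\top$ gives the stated recursion for $\delta\mathbf X_{\geq k}$, starting from $\mathbf X_{\geq d}=\R(\mathcal U_d)^\top$. For the projectors, let $\mathbf G=\mathbf X_{\geq k+1}^\top\mathbf X_{\geq k+1}$. Then $\delta\mathbf P_k=-\delta\mathbf X\,\mathbf G^{-1}\mathbf X^\top-\mathbf X\mathbf G^{-1}\delta\mathbf X^\top+\mathbf X\mathbf G^{-1}(\delta\mathbf G)\mathbf G^{-1}\mathbf X^\top$. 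I will use the interface form~\eqref{eq:gauging-interface}, which follows from~\eqref{eq:gauging} by induction downward through the recursion: $\mathbf X_{\geq k}^\top\mathbf X_{\geq k}=\R(\mathcal U_k)(\mathbf X_{\geq k+1}^\top\mathbf X_{\geq k+1}\otimes\mathbf I)\R(\mathcal U_k)^\top=\mathbf I$. Similarly, $\mathbf X_{\geq k}^\top\delta\mathbf X_{\geq k}=\R(\mathcal U_k)\R(\delta\mathcal U_k)^\top+\R(\mathcal U_k)(\mathbf X_{\geq k+1}^\top\delta\mathbf X_{\geq k+1}\otimes\mathbf I)\R(\mathcal U_k)^\top=\mathbf 0$. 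With these, $\mathbf G=\mathbf I$ and $\delta\mathbf G=\mathbf 0$, so $\delta\mathbf P_k=-\delta\mathbf X_{\geq k+1}\mathbf X_{\geq k+1}^\top-\mathbf X_{\geq k+1}\delta\mathbf X_{\geq k+1}^\top=-2\sym(\delta\mathbf X_{\geq k+1}\mathbf X_{\geq k+1}^\top)$. This holds for $k=1,\dots,d-1$, since $k+1\ge2$.
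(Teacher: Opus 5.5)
Your proof is correct relative to one premise, made explicit below, but it establishes the existence of a gauged representative by a different route from the paper. The paper never gauges an arbitrary core tuple. It observes that the displayed formulas define a linear map from the space of tuples satisfying~\eqref{eq:gauging} into $\mathrm T_z\mathcal M_{\mathbf r}^{\TT}$, and proves this map injective by \Cref{lemma:horizontal-injective}. It then gets surjectivity from the count $n_1r_1+\sum_{k=2}^d(r_{k-1}n_kr_k-r_{k-1}^2)=\dim\mathcal M_{\mathbf r}^{\TT}$.

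You go the other way. You start from surjectivity of $\mathrm D\psi$ and push any preimage into the gauge slice with your downward sweep along orbit directions. That sweep is a constructive gauge-fixing procedure the paper does not provide. Your inductive check of~\eqref{eq:gauging-interface} and your computation of $\delta\mathbf P_k$ via $\mathbf G=\mathbf I$, $\delta\mathbf G=\mathbf 0$ are also more explicit than the paper's one-line appeal to the derivative of $\mathbf I-\mathbf Y\mathbf Y^\dagger$.

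The cost is in the premise. \Cref{sec:parametrization} only asserts that $\mathcal M_{\mathbf r}^{\TT}$ is an embedded submanifold of dimension $\dim\mathcal S_{\mathbf r}^{\TT}-\dim\mathcal G$. Your submersion claim needs the stronger fact that the induced map $\mathcal S_{\mathbf r}^{\TT}/\!\sim\,\to\mathcal M_{\mathbf r}^{\TT}$ is a diffeomorphism. In particular it must be an immersion, i.e.\ $\ker\mathrm D\psi$ is no larger than the tangent space of the gauge orbit. A smooth surjection between manifolds of equal dimension can drop rank, so this does not follow from the dimension statement.

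That immersion property is exactly what \Cref{lemma:horizontal-injective} supplies. You should cite it, or the embedding result of \cite{gao2024}, rather than read it into the text. Once you invoke that lemma, the dimension count alone already gives existence, and your sweep becomes a constructive bonus rather than a needed step.

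The paper's bijectivity argument also yields uniqueness of the gauged representative. The surrounding text promises this, and the later metric and projection formulas implicitly rely on it. Your argument gives existence only, which is all the statement as phrased requires.
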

\begin{proof}
  Differentiating the multilinear TT product $\mathcal{X}=\mathcal{U}_1 \cdots \mathcal{U}_d$ gives the formula for
  $\delta\mathcal X$. For a full-column-rank matrix $\mathbf Y$, the
  derivative of $\mathbf I-\mathbf Y\mathbf Y^\dagger$ in the direction
  $\delta\mathbf Y$ is
  $-2\sym((\mathbf I-\mathbf Y\mathbf Y^\dagger)
  \delta\mathbf Y\mathbf Y^\dagger)$. Applying this identity with
  $\mathbf Y=\mathbf X_{\geq k+1}$ and using the interface form of the
  gauge conditions gives the stated formula for $\delta\mathbf P_k$.
  Differentiating the right-interface recursion gives the remaining formulas,
  including the terminal value
  $\delta\mathbf X_{\geq d}=\R(\delta\mathcal U_d)^\top$.
  These derivatives belong to $\mathrm{T}_z\mathcal M_{\mathbf r}^{\TT}$, so the
  proposed formulas define a linear map from the tuples satisfying
  \eqref{eq:gauging} to the tangent space. This map is injective by
  \Cref{lemma:horizontal-injective}, which is proved below.

  Then we show that the map is surjective by computing the dimensions of its domain.
  Note that
  $\delta\mathcal U_1$ is unconstrained and contributes $n_1r_1$ degrees of
  freedom. For each $k=2,\ldots,d$, the tensor $\delta\mathcal U_k$ has
  $r_{k-1}n_kr_k$ entries, while the gauge condition imposes
  $r_{k-1}^2$ independent linear constraints. Hence the domain has dimension
  $n_1r_1+\sum_{k=2}^d(r_{k-1}n_kr_k-r_{k-1}^2)
  =\sum_{k=1}^d r_{k-1}n_kr_k-\sum_{k=1}^{d-1}r_k^2$.
  This equals $\dim (\mathrm{T}_z\mathcal M_{\mathbf r}^{\TT})$ by the dimension formula
  established in~\Cref{sec:parametrization}. Therefore, the linear map is
  bijective, and every tangent vector has the displayed representation.
\end{proof}

The recursive formula for the interface derivative also has a direct
interpretation as the sum of TT products in which one core is replaced
by its derivative.
\begin{lemma}\label{lemma:resp-delta}
  Let $\delta{\mathbf X_{\geq k}}$ be the derivative of the interface
  matrix generated by $(\delta\mathcal U_k,\ldots,\delta\mathcal U_d)$. Then
  \begin{equation*}
    \delta{\mathbf X_{\geq k}}=\R{\left({\sum_{i=k}^d {\mathcal{U}_k \cdots\delta\mathcal U_i \cdots\mathcal{U}_d }}\right)}^\top,\qquad k=2,\ldots,d.
  \end{equation*}
  In other words, $\delta\mathbf X_{\geq k}$ collects the contributions from
  $\delta\mathcal U_k,\ldots,\delta\mathcal U_d$.
\end{lemma}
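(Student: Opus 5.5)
We argue by backward induction on $k$, from $k=d$ down to $k=2$, using the recursion for $\delta\mathbf X_{\geq k}$ in \Cref{thm:tangent-space} and the identity $\mathbf X_{\geq k}=\R(\mathcal U_k\cdots\mathcal U_d)^\top$. The key preliminary fact is a matricization rule for the TT product. For $\mathcal A\in\mathbb R^{r_{k-1}\times n_k\times r_k}$ and $\mathcal B\in\mathbb R^{r_k\times n_{k+1}\times\cdots\times n_d}$ (the latter with leading mode $r_k$),
\[
  \R(\mathcal A\mathcal B)^\top=\bigl(\R(\mathcal B)^\top\otimes\mathbf I_{n_k}\bigr)\R(\mathcal A)^\top .
\]
This is the same index bookkeeping that produces the stated recursion $\mathbf X_{\geq k}=(\mathbf X_{\geq k+1}\otimes\mathbf I_{n_k})\R(\mathcal U_k)^\top$. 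We verify it entrywise with the column-major convention. Row index $(i_k,i_{k+1},\ldots,i_d)$ has $i_k$ varying fastest, so it matches the Kronecker ordering $\mathbf Y\otimes\mathbf I_{n_k}$. Both sides then equal $\sum_{\ell}\mathcal A(\alpha,i_k,\ell)\,\mathcal B(\ell,i_{k+1},\ldots,i_d)$ in column $\alpha$.

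\emph{Base case.} For $k=d$ the sum contains the single term $\delta\mathcal U_d$, and the claim is exactly the terminal value $\delta\mathbf X_{\geq d}=\R(\delta\mathcal U_d)^\top$.

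\emph{Inductive step.} Assume the formula holds for $k+1$, i.e.\ $\delta\mathbf X_{\geq k+1}=\R(\mathcal B')^\top$ with $\mathcal B':=\sum_{i=k+1}^d\mathcal U_{k+1}\cdots\delta\mathcal U_i\cdots\mathcal U_d$. We substitute this into the recursion
\[
  \delta\mathbf X_{\geq k}=(\mathbf X_{\geq k+1}\otimes\mathbf I_{n_k})\R(\delta\mathcal U_k)^\top+(\delta\mathbf X_{\geq k+1}\otimes\mathbf I_{n_k})\R(\mathcal U_k)^\top .
\]
\begin{itemize}
  \item In the first term we write $\mathbf X_{\geq k+1}=\R(\mathcal U_{k+1}\cdots\mathcal U_d)^\top$. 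The matricization rule, read right to left, turns it into $\R(\delta\mathcal U_k\mathcal U_{k+1}\cdots\mathcal U_d)^\top$.
  \item The second term similarly becomes $\R(\mathcal U_k\mathcal B')^\top$.
\end{itemize}
The TT product is bilinear and $\R(\cdot)$ is linear. Hence $\mathcal U_k\mathcal B'$ expands to $\sum_{i=k+1}^d\mathcal U_k\cdots\delta\mathcal U_i\cdots\mathcal U_d$, and adding the $i=k$ term from the first summand gives the claim for $k$.

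An alternative check is that the lemma is simply the product rule applied to $\mathbf X_{\geq k}=\R(\mathcal U_k\cdots\mathcal U_d)^\top$ along the curve $c(t)$, using that $\R$ is linear and the TT product is multilinear. This also confirms that the recursion in \Cref{thm:tangent-space} is consistent.

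The main obstacle is the matricization rule itself: checking that the Kronecker factor ordering $\mathbf Y\otimes\mathbf I_{n_k}$, rather than $\mathbf I_{n_k}\otimes\mathbf Y$, matches the column-major multi-index convention in $\R$. I would settle this once entrywise, since everything else is linear bookkeeping.
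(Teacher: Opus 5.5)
Your proposal is correct and follows the same route as the paper's proof. Both argue by backward induction from the terminal value $\delta\mathbf X_{\geq d}=\R(\delta\mathcal U_d)^\top$, substituting the inductive hypothesis into the right-interface recursion. The only difference is detail: you make explicit the matricization identity $\R(\mathcal A\mathcal B)^\top=(\R(\mathcal B)^\top\otimes\mathbf I_{n_k})\R(\mathcal A)^\top$, including its column-major Kronecker ordering, and the bilinearity of the TT product, which the paper leaves implicit.
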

\begin{proof}
  For $k=d$, $\delta\mathbf X_{\geq d}=\R(\delta\mathcal U_d)^\top$.
  Assume inductively that the identity holds for $k=i$. Substituting this
  identity into the right-interface recursion at $k=i-1$ gives
  $\delta\mathbf X_{\geq k}
  =\R(\sum_{i=k}^d\mathcal U_k\cdots\delta\mathcal U_i\cdots
  \mathcal U_d)^\top$.
\end{proof}
Note that the identity remains valid when the interface derivative
$\delta\mathbf X_{\geq k}$ is extended to $k=1$.
The gauge conditions select a horizontal complement to the TT gauge
directions. To show that the resulting representation is unique, it remains to
verify that no nonzero tuple satisfying the gauge conditions represents the
zero tangent vector.
\begin{lemma}\label{lemma:horizontal-injective}
If $(\delta\mathcal U_1,\ldots,\delta\mathcal U_d)$ satisfies the gauge
conditions and represents the zero tangent vector, then
$\delta\mathcal U_k=0$ for every $k=1,\ldots,d$.
\end{lemma}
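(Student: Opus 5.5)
The plan is to use only the projector components of the zero tangent vector to kill $\delta\mathcal U_2,\ldots,\delta\mathcal U_d$. After that, the tensor component $\delta\mathcal X=0$ forces $\delta\mathcal U_1=0$. Throughout, the cores are right-orthogonal and the tuple satisfies the interface form~\eqref{eq:gauging-interface}, that is, $\mathbf X_{\geq k}^\top\mathbf X_{\geq k}=\mathbf I$ and $\mathbf X_{\geq k}^\top\delta\mathbf X_{\geq k}=\mathbf 0$ for $k=2,\ldots,d$.

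\emph{Step 1: $\delta\mathbf X_{\geq k+1}=\mathbf 0$ for $k=1,\ldots,d-1$.}
By \Cref{thm:tangent-space},
\[
\delta\mathbf P_k=-\bigl(\delta\mathbf X_{\geq k+1}\mathbf X_{\geq k+1}^\top+\mathbf X_{\geq k+1}\delta\mathbf X_{\geq k+1}^\top\bigr).
\]
Multiplying $\delta\mathbf P_k=\mathbf 0$ on the right by $\mathbf X_{\geq k+1}$ turns the first term into $\delta\mathbf X_{\geq k+1}$, using orthonormality. The second term becomes $\mathbf X_{\geq k+1}(\delta\mathbf X_{\geq k+1}^\top\mathbf X_{\geq k+1})$, which vanishes by the second gauge identity. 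Hence $\delta\mathbf X_{\geq j}=\mathbf 0$ for every $j=2,\ldots,d$.

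\emph{Step 2: backward recursion over the cores.}
For $k=d$ we have $\delta\mathbf X_{\geq d}=\R(\delta\mathcal U_d)^\top=\mathbf 0$, so $\delta\mathcal U_d=0$. For $k=d-1,\ldots,2$, the recursion in \Cref{thm:tangent-space} reads
\[
\delta\mathbf X_{\geq k}=(\mathbf X_{\geq k+1}\otimes\mathbf I_{n_k})\R(\delta\mathcal U_k)^\top+(\delta\mathbf X_{\geq k+1}\otimes\mathbf I_{n_k})\R(\mathcal U_k)^\top .
\]
The left-hand side is zero by Step~1, and the second term is zero because $\delta\mathbf X_{\geq k+1}=\mathbf 0$. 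The matrix $\mathbf X_{\geq k+1}\otimes\mathbf I_{n_k}$ has orthonormal columns, hence is injective. Therefore $\R(\delta\mathcal U_k)=\mathbf 0$, i.e.\ $\delta\mathcal U_k=0$.

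\emph{Step 3: the first core.}
With $\delta\mathcal U_2=\cdots=\delta\mathcal U_d=0$, the formula for $\delta\mathcal X$ reduces to $\delta\mathcal X=\delta\mathcal U_1\mathcal U_2\cdots\mathcal U_d$. Its first unfolding is $\matL(\delta\mathcal U_1)\mathbf X_{\geq 2}^\top$. Setting $\delta\mathcal X=0$ and multiplying on the right by $\mathbf X_{\geq 2}$ gives $\matL(\delta\mathcal U_1)=\mathbf 0$, again by orthonormality. Hence $\delta\mathcal U_1=0$.

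There is no serious obstacle; the argument is a straightforward chain of linear-algebra steps. The one point needing care is Step~1: the gauge condition $\mathbf X_{\geq k}^\top\delta\mathbf X_{\geq k}=\mathbf 0$ is exactly what lets the symmetric part $\sym(\cdot)$ be "inverted" to recover $\delta\mathbf X_{\geq k+1}$ from $\delta\mathbf P_k$.
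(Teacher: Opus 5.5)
Your proof is correct and follows essentially the same route as the paper. Both arguments use $\delta\mathbf P_k=\mathbf 0$ together with the gauge conditions to eliminate $\delta\mathcal U_d,\ldots,\delta\mathcal U_2$, and then use $\matL(\delta\mathcal U_1)\mathbf X_{\geq 2}^\top=\mathbf 0$ to eliminate $\delta\mathcal U_1$. The only difference is bookkeeping: you first recover $\delta\mathbf X_{\geq j}=\mathbf 0$ for every $j$ from all the projector components at once and then peel off the cores, whereas the paper interleaves this with the backward induction.
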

\begin{proof}
Suppose $(\delta\mathcal X,\delta\mathbf P_1,\ldots,
\delta\mathbf P_{d-1})=0$. For the last projector,
$\delta\mathbf P_{d-1}=-2\sym(\R(\delta\mathcal U_d)^\top
\R(\mathcal U_d))=0$. Together with the gauge condition
$\R(\mathcal U_d)\R(\delta\mathcal U_d)^\top=0$ and the orthonormality of
the rows of $\R(\mathcal U_d)$, this gives
$\R(\delta\mathcal U_d)=0$.

Assume recursively that $\delta\mathcal U_{k+1}=\cdots=
\delta\mathcal U_d=0$. Then
$\delta\mathbf X_{\geq k}=(\mathbf X_{\geq k+1}\otimes
\mathbf I_{n_k})\R(\delta\mathcal U_k)^\top$. The equality
$\delta\mathbf P_{k-1}=0$, the gauge condition, and the full-column-rank of
$\mathbf X_{\geq k+1}$ imply $\R(\delta\mathcal U_k)=0$. Backward
induction gives $\delta\mathcal U_k=0$ for $k=2,\ldots,d$. Finally,
$0=\delta\mathbf X^{\langle1\rangle}=\matL(\delta\mathcal U_1)
\mathbf X_{\geq2}^\top$. Since $\mathbf X_{\geq2}$ has full
column rank, $\matL(\delta\mathcal U_1)=0$.
\end{proof}

Riemannian optimization methods on $\mathcal{M}^{\TT}_{\mathbf{r}}$ also requires Riemannian metric on the tangent space.
We choose the Euclidean metric which can be evaluated without computing the full
tensors or projector derivatives by using a backward recursion over the TT
cores.
\begin{proposition}[Riemannian metric] \label{thm:inner}
  Let $z=(\mathcal X,\mathbf P_1,\dots,\mathbf P_{d-1})\in
  \mathcal M_{\mathbf r}^{\TT}$ be represented by
  $(\mathcal U_1,\dots,\mathcal U_d)$. Let tangent vectors
$\xi=(\delta_\xi\mathcal X,\delta_\xi\mathbf P_1,\dots,\delta_\xi\mathbf P_{d-1})$ and $
    \eta=(\delta_\eta\mathcal X,\delta_\eta\mathbf P_1,\dots,
    \delta_\eta\mathbf P_{d-1})$ be represented by the tuples
  $(\delta_\xi\mathcal U_1,\dots,\delta_\xi\mathcal U_d)$ and
  $(\delta_\eta\mathcal U_1,\dots,\delta_\eta\mathcal U_d)$ that satisfy the
  gauge conditions~\eqref{eq:gauging}, respectively. Then
  \begin{equation*}
    \langle\xi,\eta\rangle=\tr(\mathbf S_1)+2\sum_{k=2}^{d}\tr(\mathbf S_k),
  \end{equation*}
  where $\mathbf S_k$, $k=1,\dots,d$, are square matrices of size $r_{k-1}$,
  defined inductively by $\mathbf S_d:=\R(\delta_\xi\mathcal U_d){\R(\delta_\eta\mathcal U_d)}^\top$, and
$\mathbf S_k
    :=\R(\delta_\xi\mathcal U_k){\R(\delta_\eta\mathcal U_k)}^\top
    +\R(\mathcal{U}_k)
      \left(\mathbf S_{k+1} \otimes \mathbf I_{n_k}\right)
      {\R(\mathcal{U}_k)}^\top$, $k=d-1,\dots,1$.
\end{proposition}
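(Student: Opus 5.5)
The plan is to show that each matrix $\mathbf S_k$ in the recursion equals the Gram-type product $(\delta_\xi\mathbf X_{\geq k})^\top\delta_\eta\mathbf X_{\geq k}$. Once that is known, the formula follows by evaluating the Euclidean metric of $\widehat{\mathcal M}_{\mathbf r}^{\TT}$ componentwise:
\[
\langle\xi,\eta\rangle=\langle\delta_\xi\mathcal X,\delta_\eta\mathcal X\rangle+\sum_{k=1}^{d-1}\langle\delta_\xi\mathbf P_k,\delta_\eta\mathbf P_k\rangle .
\]
Throughout I use the representation of \Cref{thm:tangent-space} together with the interface form~\eqref{eq:gauging-interface} of the gauge conditions.

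\emph{Step 1 (recursion for $\mathbf S_k$).} I argue by backward induction on $k$. For $k=d$ the claim holds because $\delta\mathbf X_{\geq d}=\R(\delta\mathcal U_d)^\top$. For $k<d$, substitute the recursion
\[
\delta\mathbf X_{\geq k}=(\mathbf X_{\geq k+1}\otimes\mathbf I_{n_k})\R(\delta\mathcal U_k)^\top+(\delta\mathbf X_{\geq k+1}\otimes\mathbf I_{n_k})\R(\mathcal U_k)^\top
\]
for both $\xi$ and $\eta$, expand the product, and use $(\mathbf A\otimes\mathbf I)^\top(\mathbf B\otimes\mathbf I)=\mathbf A^\top\mathbf B\otimes\mathbf I$. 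The four resulting terms behave as follows.
\begin{itemize}
\item The core--core term reduces to $\R(\delta_\xi\mathcal U_k)\R(\delta_\eta\mathcal U_k)^\top$, since $\mathbf X_{\geq k+1}^\top\mathbf X_{\geq k+1}=\mathbf I$.
\item The two cross terms contain $\mathbf X_{\geq k+1}^\top\delta\mathbf X_{\geq k+1}=\mathbf 0$ (or its transpose), so they vanish.
\item The remaining term becomes $\R(\mathcal U_k)(\mathbf S_{k+1}\otimes\mathbf I_{n_k})\R(\mathcal U_k)^\top$ by the induction hypothesis.
\end{itemize}
This is exactly the stated recursion.

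The step needing care is $k=1$, because no gauge condition is imposed on $\delta\mathcal U_1$. The argument still goes through: the cancellations above only used the gauge conditions at index $k+1=2$, and those do hold.

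\emph{Step 2 (tensor component).} By \Cref{lemma:resp-delta}, extended to $k=1$ as noted after it, $\delta\mathbf X_{\geq1}=\R(\delta\mathcal X)^\top$ is a single column, namely $\vecop(\delta\mathcal X)$ up to reshaping. Hence $\langle\delta_\xi\mathcal X,\delta_\eta\mathcal X\rangle=\tr(\mathbf S_1)$.

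\emph{Step 3 (projector components).} Fix $k$ and write $\mathbf Y=\mathbf X_{\geq k+1}$, $\mathbf A=\delta_\xi\mathbf X_{\geq k+1}$ and $\mathbf B=\delta_\eta\mathbf X_{\geq k+1}$. Then $\delta_\xi\mathbf P_k=-(\mathbf A\mathbf Y^\top+\mathbf Y\mathbf A^\top)$, and similarly for $\eta$ with $\mathbf B$. Expanding the inner product gives four traces.
\begin{itemize}
\item The two mixed traces contain $\mathbf Y^\top\mathbf A$ or $\mathbf Y^\top\mathbf B$, which are zero, so they vanish.
\item The other two each equal $\tr(\mathbf A^\top\mathbf B)$, since $\mathbf Y^\top\mathbf Y=\mathbf I$.
\end{itemize}
Therefore $\langle\delta_\xi\mathbf P_k,\delta_\eta\mathbf P_k\rangle=2\tr(\mathbf S_{k+1})$. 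Summing over $k=1,\dots,d-1$ yields $2\sum_{k=2}^d\tr(\mathbf S_k)$, which completes the formula.

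I expect the main obstacle to be bookkeeping rather than anything conceptual. One must check that the Kronecker ordering $(\cdot\otimes\mathbf I_{n_k})$ in the right-interface recursion is compatible with the row ordering of $\R(\mathcal U_k)$, so that the products collapse as claimed. One must also confirm that the gauge orthogonality is available at every index used, including the unconstrained first core.
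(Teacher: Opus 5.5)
Your proposal is correct and follows essentially the same route as the paper's proof. You expand the product metric, use the interface gauge conditions to reduce each projector term to $2\langle\delta_\xi\mathbf X_{\geq k+1},\delta_\eta\mathbf X_{\geq k+1}\rangle$ and the tensor term to $\langle\delta_\xi\mathbf X_{\geq 1},\delta_\eta\mathbf X_{\geq 1}\rangle$, and identify these with $\tr(\mathbf S_k)$ by expanding the interface recursion, where the mixed terms vanish. The difference is only that you make explicit the matrix identity $\mathbf S_k=\delta_\xi\mathbf X_{\geq k}^\top\delta_\eta\mathbf X_{\geq k}$, and the fact that the case $k=1$ needs only the gauge at index $2$, both of which the paper leaves implicit.
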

\begin{proof}
  By the product metric on the ambient space and the gauge conditions,
  \begin{align*}
    \langle\xi,\eta\rangle
    &= \left\langle \delta_\xi\mathcal{X},\delta_\eta\mathcal{X}\right\rangle
    + \sum_{k=1}^{d-1}
    \left\langle \delta_\xi\mathbf P_k,\delta_\eta\mathbf P_k \right\rangle \\
    &= \left\langle \delta_\xi\mathbf X_{\geq 1},
    \delta_\eta\mathbf X_{\geq 1} \right\rangle
    + 2 \sum_{k=1}^{d-1}
    \left\langle \delta_\xi\mathbf X_{\geq k+1},
    \delta_\eta\mathbf X_{\geq k+1}\right\rangle.
  \end{align*}
  Substituting the interface recursions into each inner product and expanding
  shows that the two mixed terms vanish by the gauge conditions. The remaining
  terms satisfy $\langle\delta_\xi\mathbf X_{\geq k},
  \delta_\eta\mathbf X_{\geq k}\rangle=\tr(\mathbf S_k)$ by the definition of
  $\mathbf S_k$.
  Substitution into the product metric proves the stated recursion.
\end{proof}
\paragraph{Remark}
  For the manifold of tensors with fixed TT rank~\cite{steinlechner2016}, the tangent-vector inner
  product reduces to the term $\tr(\mathbf S_1)$ in~\Cref{thm:inner}. The additional
  contribution $2\sum_{k=2}^{d}\tr(\mathbf S_k)$ accounts for the derivatives of the
  right-interface projectors in the parametrization manifold.

\subsection{Projection}
The Riemannian gradient is obtained by projecting the ambient Euclidean
gradient onto the tangent space $\mathrm{T}_z\mathcal{M}_{\mathbf r}^{\TT}$. The following construction
expresses the projection through TT-core and interface recursions without
forming an explicit basis of the tangent space.

First, we define the following forward matrix sequences.
  Let $z=(\mathcal X,\mathbf P_1,\dots,\mathbf P_{d-1})\in
  \mathcal M_{\mathbf r}^{\TT}$ be represented by gauged cores
  $(\mathcal U_1,\dots,\mathcal U_d)\in\mathcal S_{\mathbf r}^{\TT}$. For an
  ambient vector $(\mathcal Y,\mathbf Q_1,\dots,\mathbf Q_{d-1})\in
  \widehat{\mathcal M}_{\mathbf r}^{\TT}$, define the matrix sequences
  $(\mathbf L_k)_{k=0}^{d-1}$ and $(\mathbf R_k)_{k=0}^{d-1}$ by
  $\mathbf L_0:=\mathbf I_{r_0}$ and
  $\mathbf R_0:=\vecop(\mathcal Y)^\top$, together with
  \begin{align*}
    {\mathbf L_k}&:={\ptr_{n_k}}{\left({\R{{\left(\mathcal{U}_k \right)}^\top}{\mathbf L_{k-1}}\R{\left(\mathcal{U}_k \right)}}\right)}+2{\mathbf I_{r_k}},\qquad k=1,\dots,d-1, \\
    {\mathbf R_k}&:={\ptr_{n_k}}{\left({\R{{\left(\mathcal{U}_k \right)}^\top}{\mathbf R_{k-1}}}\right)}-2{\mathbf X_{\geq k+1}^\top}{\mathbf Q_k},\qquad k=1,\dots,d-1.
  \end{align*}

Note that every matrix $\mathbf L_k$, $k=0,\dots,d-1$, is positive definite and hence invertible.
  The initial matrix $\mathbf L_0=\mathbf I_{r_0}$ is positive definite. If
  $\mathbf L_k$ is positive definite, then
  $\R(\mathcal U_{k+1})^\top\mathbf L_k\R(\mathcal U_{k+1})$ is positive
  semidefinite. Its partial trace is also positive semidefinite by the result
  in~\Cref{sec:matrix}; adding $2\mathbf I_{r_{k+1}}$ therefore makes
  $\mathbf L_{k+1}$ positive definite. The claim follows by induction.

The positive definiteness of $\mathbf L_k$ makes each core update well defined. The
orthogonal projection is then obtained using the gauge conditions in~\eqref{eq:gauging} to simplify the computation.
\begin{proposition}[projection] \label{thm:projection}
  Let $z=(\mathcal X,\mathbf P_1,\dots,\mathbf P_{d-1})\in
  \mathcal M_{\mathbf r}^{\TT}$. The
  orthogonal projection $\Proj_{z}(\mathcal Y,\mathbf Q_1,\dots,\mathbf Q_{d-1})$ of
  $(\mathcal Y,\mathbf Q_1,\dots,\mathbf Q_{d-1})\in
  \widehat{\mathcal M}_{\mathbf r}^{\TT}$ into 
  $\mathrm{T}_z\mathcal M_{\mathbf r}^{\TT}$
  has core representation
          $(\delta\mathcal U_1,\dots,\delta\mathcal U_d)$ that satisfies
  \begin{align*}
    \R{\left({\delta\mathcal U_{1}}\right)}&={\mathbf L_0^{-1}}{\mathbf R_0}{\left({{\mathbf X_{\geq 2}}\otimes{\mathbf I_{n_{1}}}}\right)}, \\
    \R{\left({\delta\mathcal U_{k+1}}\right)}
    &={\mathbf L_k^{-1}}{\mathbf R_k}{\left({{\mathbf X_{\geq k+2}}\otimes{\mathbf I_{n_{k+1}}}}\right)}
    \left(\mathbf{I}_{r_{k+1}n_{k+1}}-\R\left(\mathcal{U}_{k+1}\right)^\top
    \R\left(\mathcal{U}_{k+1}\right)\right),
    \qquad k=1,\dots,d-2, \\
    \R{\left(\delta\mathcal U_d \right)}&={\mathbf L_{d-1}^{-1}}{\mathbf R_{d-1}}  \left(\mathbf I_{r_d n_d}-\R\left(\mathcal{U}_{d}\right)^\top \R\left(\mathcal{U}_{d}\right)\right).
  \end{align*}
\end{proposition}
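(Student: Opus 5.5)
The projection $\Proj_z(\mathcal Y,\mathbf Q_1,\dots,\mathbf Q_{d-1})$ is the unique tangent vector $\xi$ with
\[
\langle \xi,\eta\rangle=\langle(\mathcal Y,\mathbf Q_1,\dots,\mathbf Q_{d-1}),\eta\rangle
\quad\text{for every }\eta\in\mathrm T_z\mathcal M_{\mathbf r}^{\TT}.
\]
By \Cref{thm:tangent-space} and \Cref{lemma:horizontal-injective}, tangent vectors correspond bijectively to gauged tuples. I would therefore parametrize both $\xi$ and $\eta$ by gauged core tuples and reduce the statement to a linear system for $(\delta\mathcal U_1,\dots,\delta\mathcal U_d)$.

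\textbf{Left-hand side.} By \Cref{thm:inner} and the right-orthogonality of $\mathbf X_{\geq k+1}$, the metric is a sum of Frobenius inner products of $\R(\delta\mathcal U_k)$ weighted by forward accumulations. Unrolling the recursion for $\mathbf S_k$ gives
\[
\langle\xi,\eta\rangle=\sum_{k=1}^d\bigl\langle \mathbf L_{k-1}\R(\delta_\xi\mathcal U_k),\R(\delta_\eta\mathcal U_k)\bigr\rangle .
\]
Here $\mathbf L_{k-1}$ is exactly the forward sequence: it collects the weight $1$ from $\tr(\mathbf S_1)$ and the weights $2$ from the projector terms, propagated through $\ptr_{n_k}(\R(\mathcal U_k)^\top\cdot\,\R(\mathcal U_k))$. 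I would verify this by induction on $k$, using $\langle \mathbf A,\mathbf D\otimes\mathbf I\rangle=\langle\ptr(\mathbf A),\mathbf D\rangle$ from \Cref{sec:matrix}. The key point is that the cross terms between different $k$ vanish because of the gauge conditions \eqref{eq:gauging}.

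\textbf{Right-hand side.} Use \Cref{lemma:resp-delta} to write $\delta_\eta\mathcal X$ and each $\delta_\eta\mathbf X_{\geq k+1}$ as sums of single-core variations. Since $\mathbf Q_k$ is symmetric,
\[
\langle\mathbf Q_k,-2\sym(\delta\mathbf X_{\geq k+1}\mathbf X_{\geq k+1}^\top)\rangle=-2\langle\mathbf Q_k\mathbf X_{\geq k+1},\delta\mathbf X_{\geq k+1}\rangle.
\]
Collecting all terms multiplying $\delta_\eta\mathcal U_k$, then contracting the left cores via partial traces and the right cores via $(\mathbf X_{\geq k+1}\otimes\mathbf I_{n_k})$, I expect
\[
\text{right-hand side}=\sum_{k=1}^d\bigl\langle \mathbf R_{k-1}(\mathbf X_{\geq k+1}\otimes\mathbf I_{n_k}),\R(\delta_\eta\mathcal U_k)\bigr\rangle ,
\]
with $\mathbf R_{k-1}$ the forward sequence. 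Here $\mathbf R_0=\vecop(\mathcal Y)^\top$ absorbs $\mathcal Y$, and the $-2\mathbf X_{\geq k+1}^\top\mathbf Q_k$ terms enter at the stage where the interface $\mathbf X_{\geq k+1}$ first appears.

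\textbf{Solving the system.} Since the $\delta_\eta\mathcal U_k$ range independently over the gauged subspaces, the identity holds for all $\eta$ if and only if, for each $k$, $\mathbf L_{k-1}\R(\delta_\xi\mathcal U_k)-\mathbf R_{k-1}(\mathbf X_{\geq k+1}\otimes\mathbf I)$ lies in the orthogonal complement of the gauge subspace $\{\mathbf V:\R(\mathcal U_k)\mathbf V^\top=0\}$, i.e., in the row space spanned by the rows of $\R(\mathcal U_k)$. For $k=1$ there is no constraint, which gives the first formula with $\mathbf L_0=\mathbf I$. For $k\ge2$, the solution is obtained by right-multiplying by the projector $\mathbf I-\R(\mathcal U_k)^\top\R(\mathcal U_k)$ and then by $\mathbf L_{k-1}^{-1}$. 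This step needs $\mathbf L_{k-1}^{-1}$ to commute with that right projection, which holds because one multiplies on the left and the other on the right; positive definiteness of $\mathbf L_{k-1}$ guarantees the inverse exists. The result satisfies the gauge condition and yields the stated formulas, with the convention $\mathbf X_{\geq d+1}=1$ for $k=d$.

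\textbf{Main obstacle.} The hard part is the bookkeeping in the first two steps. One must show that, after gauging, the contributions from $\mathcal Y$ and from the projector terms of every level $j\ge k$ telescope into the single forward recursions $\mathbf L_k$ and $\mathbf R_k$. This requires carefully tracking how $\delta\mathbf X_{\geq j}$ depends on $\delta\mathcal U_k$ for $k\ge j$, and confirming that the factor of $2$ attaches exactly once per interface.
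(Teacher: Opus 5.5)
Your proposal is correct. It reaches the formulas by a different organization of the same computation.

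\textbf{The paper's route.} The paper never separates the two sides of the orthogonality condition. Starting from \eqref{eq:proj-origin}, it runs one forward induction: once the stated formulas for $\delta_\xi\mathcal U_1,\dots,\delta_\xi\mathcal U_s$ are inserted, the residual pairing reduces to \eqref{eq:proj-iter}. At each step the terms in $\R(\delta_\eta\mathcal U_{s+1})$ coming from $A$ and $B$ cancel by the formula for $\delta_\xi\mathcal U_{s+1}$. The remaining terms are pushed through $\ptr_{n_{s+1}}$ into $\mathbf R_{s+1}$ and $\mathbf L_{s+1}$.

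\textbf{Your route.} You set up the normal equations instead. Unrolling \Cref{thm:inner} makes the metric block diagonal in gauged coordinates with weights $\mathbf L_{k-1}$. The ambient pairing has coefficients $\mathbf R_{k-1}(\mathbf X_{\geq k+1}\otimes\mathbf I_{n_k})$. You then solve core by core on the gauge subspaces.

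Both identities you claim are true. The right-hand one is less of an obstacle than you fear: it follows from the one-step relation $\langle\mathbf T^\top,\delta_\eta\mathbf X_{\geq j}\rangle=\langle\mathbf T(\mathbf X_{\geq j+1}\otimes\mathbf I_{n_j}),\R(\delta_\eta\mathcal U_j)\rangle+\langle\ptr_{n_j}(\R(\mathcal U_j)^\top\mathbf T)^\top,\delta_\eta\mathbf X_{\geq j+1}\rangle$. This relation uses no gauge condition. The gauge conditions are needed only for the block-diagonal metric, which \Cref{thm:inner} already supplies. (Incidentally, $\delta_\eta\mathcal U_k$ collects contributions from the interfaces $\mathbf X_{\geq j}$ with $j\leq k$, not $j\geq k$ as your last paragraph says.)

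\textbf{What each buys.} Your route derives the formulas rather than verifying them, and gives existence and uniqueness at once. It also separates geometry ($\mathbf L_k$, the Gram weights of the metric) from data ($\mathbf R_k$, the only place $\mathcal Y$ and $\mathbf Q_k$ enter). This is the structure \Cref{thm:vector-transport} and \Cref{thm:project-kron-tt} later exploit, by refactoring only $\mathbf R_k$. The paper's single sweep is self-contained and does not need \Cref{thm:inner}, but it hides this block structure inside the cancellation of $A-B$.
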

\begin{proof}
  To distinguish the projected vector from a test tangent vector in this
  proof, we denote $\delta\mathcal U_k$ by $\delta_\xi\mathcal U_k$ and use
  the same subscript for the corresponding interface derivatives and
  components, so $\xi=(\delta_\xi\mathcal X,\delta_\xi\mathbf P_1,\ldots,
  \delta_\xi\mathbf P_{d-1})$. Let an arbitrary tangent vector
  $\eta\in \mathrm{T}_z\mathcal M_{\mathbf r}^{\TT}$ be represented by a tuple
  $(\delta_\eta\mathcal U_1,\dots,\delta_\eta\mathcal U_d)$ satisfying the
  gauge conditions, where $\eta=(\delta_\eta\mathcal X,
  \delta_\eta\mathbf P_1,\dots,\delta_\eta\mathbf P_{d-1})$.
  Since $\eta$ is arbitrary, it suffices to prove
  \begin{equation*}
    \left\langle
    (\mathcal Y,\mathbf Q_1,\dots,\mathbf Q_{d-1})
    -(\delta_\xi\mathcal X,\delta_\xi\mathbf P_1,\dots,\delta_\xi\mathbf P_{d-1}),
    (\delta_\eta\mathcal X,\delta_\eta\mathbf P_1,\dots,
    \delta_\eta\mathbf P_{d-1})\right\rangle=0.
  \end{equation*}
  Expanding this condition gives
  $\langle\mathcal Y-\delta_\xi\mathcal X,\delta_\eta\mathcal X\rangle
  +\sum_{k=1}^{d-1}\langle\mathbf Q_k-\delta_\xi\mathbf P_k,
  \delta_\eta\mathbf P_k\rangle=0$.

Using the tangent representation in~\Cref{thm:tangent-space}, the symmetry of $\mathbf Q_k$ and $\delta_\xi\mathbf P_k$, and the interface representation in~\Cref{lemma:resp-delta}, the orthogonality condition becomes
    \begin{equation}\label{eq:proj-origin}
      \left\langle \vecop (\mathcal{Y})-
      \delta_\xi{\mathbf X_{\geq1}},
      \delta_\eta{{\mathbf X}_{\geq1}}\right\rangle
      -2\sum_{k=1}^{d-1} \left(
      \left\langle {{\mathbf Q_k},{\mathbf X_{\geq k+1}}
      \delta_\eta{{\mathbf X}_{\geq k+1}^\top}}\right\rangle
      +\left\langle {\delta_\xi{\mathbf X_{\geq k+1}},
      \delta_\eta{{\mathbf X}_{\geq k+1}}}\right\rangle
      \right)=0.
  \end{equation}
  For the first term,
  \begin{align*}
    \left\langle {\vecop(\mathcal{Y})-\delta_\xi{\mathbf X_{\geq1}},
      \delta_\eta{{\mathbf X}_{\geq1}}}\right\rangle
    &=\left\langle {\vecop(\mathcal{Y}),
      \delta_\eta{{\mathbf X}_{\geq1}}}\right\rangle
      -\left\langle {\delta_\xi{\mathbf X_{\geq1}},
      \delta_\eta{{\mathbf X}_{\geq1}}}\right\rangle \\
    &=\left\langle {{\mathbf R_0^\top},
      \delta_\eta{{\mathbf X}_{\geq1}}}\right\rangle
      -\left\langle {{\mathbf L_0^\top},
      \delta_\xi{\mathbf X_{\geq1}^\top}
      \delta_\eta{{\mathbf X}_{\geq1}}}\right\rangle.
  \end{align*}
  Using the above equation, we show by induction that, after substituting the first $s$
projected tensors $\delta_\xi\mathcal U_1,\ldots,\delta_\xi\mathcal U_s$, \eqref{eq:proj-origin} reduces to
  \begin{equation}\label{eq:proj-iter}
      \left\langle {{\mathbf R_s^\top},
      \delta_\eta{{\mathbf X}_{\geq s+1}}}\right\rangle
      -\left\langle {{\mathbf L_s^\top},
      \delta_\xi{\mathbf X_{\geq s+1}^\top}
      \delta_\eta{{\mathbf X}^{}_{\geq s+1}}}\right\rangle
      -2\sum_{k=s+1}^{d-1} \left(
      {\left\langle {{\mathbf Q_k},{\mathbf X_{\geq k+1}}
      \delta_\eta{{\mathbf X}_{\geq k+1}^\top}}\right\rangle}
      +{\left\langle {\delta_\xi{\mathbf X_{\geq k+1}},
      \delta_\eta{{\mathbf X}_{\geq k+1}}}\right\rangle}
      \right)=0.
  \end{equation}
  The case $s=0$ follows from $\mathbf L_0=\mathbf I_{r_0}$ and
  $\mathbf R_0=\vecop(\mathcal Y)^\top$. Assume~\eqref{eq:proj-iter} holds for some
  $s<d-1$, and denote its first two terms by $A$ and $B$:
  \begin{equation*}
    A:=\left\langle {{\mathbf R_s^\top},
    \delta_\eta{{\mathbf X}_{\geq s+1}}}\right\rangle,
    \qquad
    B:=\left\langle {{\mathbf L_s^\top},
    \delta_\xi{\mathbf X_{\geq s+1}^\top}
    \delta_\eta{{\mathbf X}^{}_{\geq s+1}}}\right\rangle.
  \end{equation*}
  By the recursions for $\delta_\xi\mathbf X_{\geq s+1}$ and
  $\delta_\eta\mathbf X_{\geq s+1}$, it follows that
  \begin{align*}
    A
    &=\left\langle {{\mathbf R_s^\top}},
      {\left({{\mathbf X_{\geq s+2}}\otimes{\mathbf I_{n_{s+1}}}}\right)}
      \R{{\left({\delta_\eta\mathcal U_{s+1}}\right)}^\top}
      +{\left({\delta_\eta{{\mathbf X}_{\geq s+2}}
      \otimes{\mathbf I_{n_{s+1}}}}\right)}
      \R{{\left({\mathcal{U}_{s+1}}\right)}^\top}
      \right\rangle \\
    &=\left\langle
      {{\mathbf R_s}{\left({{\mathbf X_{\geq s+2}}\otimes{\mathbf I_{n_{s+1}}}}\right)}},
      {\R{\left({\delta_\eta\mathcal U_{s+1}}\right)}}
      \right\rangle +\left\langle {{\mathbf R_s^\top}},
      {\left({\delta_\eta{{\mathbf X}_{\geq s+2}}
      \otimes{\mathbf I_{n_{s+1}}}}\right)}
      \R{{\left({\mathcal{U}_{s+1}}\right)}^\top}
      \right\rangle,
  \end{align*}
  and
  \begin{align*}
    B
    &=\left\langle
      \left({{\left({{\mathbf X_{\geq s+2}}\otimes{\mathbf I_{n_{s+1}}}}\right)}
      \R{{\left({\delta_\xi\mathcal U_{s+1}}\right)}^\top}
      +{\left({\delta_\xi{\mathbf X_{\geq s+2}}
      \otimes{\mathbf I_{n_{s+1}}}}\right)}
      \R{{\left({\mathcal{U}_{s+1}}\right)}^\top}}\right){\mathbf L_s},
      \right. \\
    &\qquad\left.
      {\left({{\mathbf X_{\geq s+2}}\otimes{\mathbf I_{n_{s+1}}}}\right)}
      \R{{\left({\delta_\eta\mathcal U_{s+1}}\right)}^\top}
      +{\left({\delta_\eta{{\mathbf X}_{\geq s+2}}
      \otimes{\mathbf I_{n_{s+1}}}}\right)}
      \R{{\left({\mathcal{U}_{s+1}}\right)}^\top}
      \right\rangle \\
    &=\left\langle {{\mathbf L_s}\R{\left({\delta_\xi\mathcal U_{s+1}}\right)}},
      {\R{\left({\delta_\eta\mathcal U_{s+1}}\right)}}\right\rangle+\left\langle
      {\R{{\left({\mathcal{U}_{s+1}}\right)}^\top}{\mathbf L_s}},
      {\left({\delta_\xi{\mathbf X_{\geq s+2}^\top}
      \delta_\eta{{\mathbf X}^{}_{\geq s+2}}\otimes{\mathbf I_{n_{s+1}}}}\right)}
      \R{{\left({\mathcal{U}_{s+1}}\right)}^\top}
      \right\rangle.
  \end{align*}
  Substituting the expression for $\delta_\xi\mathcal U_{s+1}$ gives
  \begin{align*}
    A-B
    &=\left\langle {{\mathbf R_s^\top}},
      {\left({\delta_\eta{{\mathbf X}_{\geq s+2}}
      \otimes{\mathbf I_{n_{s+1}}}}\right)}
      \R{{\left({\mathcal{U}_{s+1}}\right)}^\top}
      \right\rangle-\left\langle
      {\R{{\left({\mathcal{U}_{s+1}}\right)}^\top}{\mathbf L_s}},
      {\left({\delta_\xi{\mathbf X_{\geq s+2}^\top}
      \delta_\eta{{\mathbf X}^{}_{\geq s+2}}
      \otimes{\mathbf I_{n_{s+1}}}}\right)}
      \R{{\left({\mathcal{U}_{s+1}}\right)}^\top}
      \right\rangle \\
    &=\left\langle
      {{\mathbf R_s^\top}\R{\left({\mathcal{U}_{s+1}}\right)}},
      {\delta_\eta{{\mathbf X}_{\geq s+2}}\otimes{\mathbf I_{n_{s+1}}}}
      \right\rangle -\left\langle
      {\R{{\left({\mathcal{U}_{s+1}}\right)}^\top}{\mathbf L_s}
      \R{\left({\mathcal{U}_{s+1}}\right)}},
      {\delta_\xi{\mathbf X_{\geq s+2}^\top}
      \delta_\eta{{\mathbf X}^{}_{\geq s+2}}\otimes{\mathbf I_{n_{s+1}}}}
      \right\rangle \\
    &=\left\langle
      {{\ptr_{n_{s+1}}}{\left({{\mathbf R_s^\top}
      \R{\left({\mathcal{U}_{s+1}}\right)}}\right)}},
      {\delta_\eta{{\mathbf X}_{\geq s+2}}}
      \right\rangle-\left\langle
      {{\ptr_{n_{s+1}}}{\left({
      \R{{\left({\mathcal{U}_{s+1}}\right)}^\top}{\mathbf L_s}
      \R{\left({\mathcal{U}_{s+1}}\right)}}\right)}},
      {\delta_\xi{\mathbf X_{\geq s+2}^\top}
      \delta_\eta{{\mathbf X}^{}_{\geq s+2}}}
      \right\rangle.
  \end{align*}
  After substituting $A-B$, \eqref{eq:proj-origin} becomes
  \begin{align*}
    A-B-2{\left\langle {{\mathbf Q_{s+1}},{\mathbf X_{\geq s+2}}
      \delta_\eta{{\mathbf X}_{\geq s+2}^\top}}\right\rangle}
      -2{\left\langle {\delta_\xi{\mathbf X_{\geq s+2}},
      \delta_\eta{{\mathbf X}_{\geq s+2}}}\right\rangle}
    -2\sum_{k=s+2}^{d-1} {\left(
      {\left\langle {{\mathbf Q_k},{\mathbf X_{\geq k+1}}
      \delta_\eta{{\mathbf X}_{\geq k+1}^\top}}\right\rangle}
      +{\left\langle {\delta_\xi{\mathbf X_{\geq k+1}},
      \delta_\eta{{\mathbf X}_{\geq k+1}}}\right\rangle}
      \right)}=0.
  \end{align*}
  The first four terms can be rewritten as
  \begin{align*}
    &A-B-2{\left\langle {{\mathbf Q_{s+1}},{\mathbf X_{\geq s+2}}
      \delta_\eta{{\mathbf X}_{\geq s+2}^\top}}\right\rangle}
      -2{\left\langle {\delta_\xi{\mathbf X_{\geq s+2}},
      \delta_\eta{{\mathbf X}_{\geq s+2}}}\right\rangle} \\
    &\quad=\left\langle
      {{\ptr_{n_{s+1}}}{\left({{\mathbf R_s^\top}
      \R{\left({\mathcal{U}_{s+1}}\right)}}\right)}
      -2{\mathbf Q_{s+1}}{\mathbf X_{\geq s+2}}},
      {\delta_\eta{{\mathbf X}_{\geq s+2}}}
      \right\rangle -{\left\langle
      {{\ptr_{n_{s+1}}}{\left({
      \R{{\left({\mathcal{U}_{s+1}}\right)}^\top}{\mathbf L_s}
      \R{\left({\mathcal{U}_{s+1}}\right)}}\right)}+2{\mathbf I_{r_{s+1}}}},
      {\delta_\xi{\mathbf X_{\geq s+2}^\top}
      \delta_\eta{{\mathbf X}^{}_{\geq s+2}}}
      \right\rangle} \\
    &\quad=\left\langle {{\mathbf R_{s+1}^\top},
      \delta_\eta{{\mathbf X}_{\geq s+2}}}\right\rangle
      -\left\langle {{\mathbf L_{s+1}^\top},
      \delta_\xi{\mathbf X_{\geq s+2}^\top}
      \delta_\eta{{\mathbf X}^{}_{\geq s+2}}}\right\rangle.
  \end{align*}
  This is precisely~\eqref{eq:proj-iter} with $s+1$, which
  completes the induction. At $s=d-1$, the summation in~\eqref{eq:proj-iter} is
  empty, and the remaining expression is
  \begin{align*}
    \left\langle {{\mathbf R_{d-1}^\top},
      \delta_\eta{{\mathbf X}_{\geq d}}}\right\rangle
      -\left\langle {{\mathbf L_{d-1}^\top},
      \delta_\xi{\mathbf X_{\geq d}^\top}
      \delta_\eta{{\mathbf X}^{}_{\geq d}}}\right\rangle =\left\langle
      {{\mathbf R_{d-1}^\top}-\R{{\left(\delta_\xi\mathcal U_d \right)}^\top}
      {\mathbf L_{d-1}^\top}},
      {\R{{\left(\delta_\eta\mathcal U_d \right)}^\top}}
      \right\rangle,
  \end{align*}
  substituting the formula for $\delta_\xi\mathcal U_d$ makes this expression zero
  for every tuple $(\delta_\eta\mathcal U_1,\dots,
  \delta_\eta\mathcal U_d)$ satisfying the gauge conditions. Hence the residual is orthogonal to the tangent
  space, proving that the tangent vector is the unique orthogonal projection.
\end{proof}

For a smooth function $F=f\circ\phi$, only the tensor component contributes to the ambient
Euclidean gradient, which is
$(\nabla f(\mathcal X),\mathbf 0,\ldots,\mathbf 0)$. Projecting this vector by
\Cref{thm:projection} gives the Riemannian gradient.
\begin{corollary}[Riemannian gradient] \label{thm:riemann-grad}
  Let $f:\mathbb R^{n_1\times\cdots\times n_d}\to\mathbb R$ be smooth and
  set $F=f\circ\phi$ on $\mathcal M_{\mathbf r}^{\TT}$. At a point
  $z=(\mathcal X,\mathbf P_1,\dots,\mathbf P_{d-1})$ represented by gauged
  cores $(\mathcal U_1,\dots,\mathcal U_d)\in\mathcal S_{\mathbf r}^{\TT}$,
  the Riemannian gradient $\grad F(z)$ has core representation
  $(\delta\mathcal U_1,\dots,\delta\mathcal U_d)$ given by
  \begin{align*}
    \R{\left({\delta\mathcal U_{1}}\right)}&={\mathbf L_0^{-1}}{\mathbf R_0}{\left({{\mathbf X_{\geq 2}}\otimes{\mathbf I_{n_{1}}}}\right)}, \\
    \R{\left({\delta\mathcal U_{k+1}}\right)}&={\mathbf L_k^{-1}}{\mathbf R_k}{\left({{\mathbf X_{\geq k+2}}\otimes{\mathbf I_{n_{k+1}}}}\right)}  \left(\mathbf I_{r_{k+1}n_{k+1}}-\R\left(\mathcal{U}_{k+1}\right)^\top \R\left(\mathcal{U}_{k+1}\right)\right),\qquad k=1,\dots,d-2, \\
    \R{\left(\delta\mathcal U_d \right)}&={\mathbf L_{d-1}^{-1}}{\mathbf R_{d-1}}  \left(\mathbf I_{r_{d}n_{d}}-\R\left(\mathcal{U}_{d}\right)^\top \R\left(\mathcal{U}_{d}\right)\right),
  \end{align*}
  where $\mathbf L_{0}=\mathbf I_{r_0}$,
  $\mathbf R_{0}=\vecop\left(\nabla f(\mathcal X)\right)^\top$, and the matrix sequences $(\mathbf L_k)_{k=1}^{d-1}$ and $(\mathbf R_k)_{k=1}^{d-1}$ are simplified to
  \begin{align*}
    \mathbf L_k&={\ptr_{n_k}}{\left({\R{{\left(\mathcal{U}_k \right)}^\top}{\mathbf L_{k-1}}\R{\left(\mathcal{U}_k \right)}}\right)}+2{\mathbf I_{r_k}},\qquad k=1,\dots,d-1, \\
    \mathbf R_k&={\ptr_{n_k}}{\left({\R{{\left(\mathcal{U}_k \right)}^\top}{\mathbf R_{k-1}}}\right)},\qquad k=1,\dots,d-1.
  \end{align*}
\end{corollary}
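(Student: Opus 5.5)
The plan is to deduce the corollary from \Cref{thm:projection}. First, I will identify the Riemannian gradient of $F=f\circ\phi$. The manifold $\mathcal M_{\mathbf r}^{\TT}$ is an embedded submanifold of $\widehat{\mathcal M}_{\mathbf r}^{\TT}$ carrying the induced Euclidean metric. So $\grad F(z)$ is the orthogonal projection onto $\mathrm T_z\mathcal M_{\mathbf r}^{\TT}$ of the Euclidean gradient of any smooth extension of $F$.

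The natural extension is $\bar F(\mathcal X,\mathbf P_1,\ldots,\mathbf P_{d-1}):=f(\mathcal X)$, i.e., $f$ composed with the linear map $\phi$ on the whole ambient space. Its differential in a direction $(\mathcal Y,\mathbf Q_1,\ldots,\mathbf Q_{d-1})$ is $\langle\nabla f(\mathcal X),\mathcal Y\rangle$. Hence its Euclidean gradient is $(\nabla f(\mathcal X),\mathbf 0,\ldots,\mathbf 0)$, and therefore
\[
\grad F(z)=\Proj_z\bigl(\nabla f(\mathcal X),\mathbf 0,\ldots,\mathbf 0\bigr).
\]

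Second, I will apply \Cref{thm:projection} with $\mathcal Y=\nabla f(\mathcal X)$ and $\mathbf Q_k=\mathbf 0$ for all $k$. The core formulas for $\delta\mathcal U_1,\ldots,\delta\mathcal U_d$ carry over verbatim. The sequences are $\mathbf L_0=\mathbf I_{r_0}$ and $\mathbf R_0=\vecop(\nabla f(\mathcal X))^\top$. The $\mathbf L_k$ recursion does not involve $\mathbf Q_k$, so it is unchanged, and its positive definiteness (shown before \Cref{thm:projection}) still makes the inverses well defined. In the $\mathbf R_k$ recursion the term $-2\mathbf X_{\geq k+1}^\top\mathbf Q_k$ vanishes, which gives exactly $\mathbf R_k=\ptr_{n_k}(\R(\mathcal U_k)^\top\mathbf R_{k-1})$.

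Finally, \Cref{thm:projection} gives a representation that satisfies the gauge conditions~\eqref{eq:gauging}. By \Cref{lemma:horizontal-injective} such a representation is unique, so the displayed tuple is the core representation of $\grad F(z)$.

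The only step needing care is the first: justifying that the gradient of the restriction equals the projection of the ambient gradient. This is the standard fact for Riemannian submanifolds with the induced metric \cite{AbsilMahonySepulchre+2008,boumal2020introduction}. It applies here because \Cref{thm:inner} uses precisely the ambient product metric. Everything else is direct substitution.
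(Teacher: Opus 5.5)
Your proposal is correct and follows the paper's argument. The paper likewise identifies the ambient Euclidean gradient of $f\circ\phi$ as $(\nabla f(\mathcal X),\mathbf 0,\ldots,\mathbf 0)$, projects it with \Cref{thm:projection}, and notes that $\mathbf Q_k=\mathbf 0$ removes the $-2\mathbf X_{\geq k+1}^\top\mathbf Q_k$ term from the $\mathbf R_k$ recursion; your remarks on the smooth extension, the induced metric, and uniqueness via \Cref{lemma:horizontal-injective} spell out steps the paper leaves implicit.
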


A Riemannian conjugate-gradient method employs tangent vectors based at
different points. We use projection-based vector transport to map tangent vectors between tangent spaces at different points on the manifold, allowing vectors from different iterations to be combined.
\begin{proposition}[vector transport]\label{thm:vector-transport}
Let $z,\widetilde z\in\mathcal M_{\mathbf r}^{\TT}$ have right-orthogonal core representations
$(\mathcal U_1,\ldots,\mathcal U_d)$ and
$(\widetilde{\mathcal U}_1,\ldots,\widetilde{\mathcal U}_d)$,
respectively, and let
$\xi_z=(\delta\mathcal X,\delta\mathbf P_1,\ldots,
\delta\mathbf P_{d-1})\in \mathrm{T}_z\mathcal M_{\mathbf r}^{\TT}$. Define
\begin{equation*}
  \mathcal T_{z\to\widetilde z}(\xi_z)
  :=\Proj_{\widetilde z}(\delta\mathcal X,\delta\mathbf P_1,\ldots,
  \delta\mathbf P_{d-1})
  \in \mathrm{T}_{\widetilde z}\mathcal M_{\mathbf r}^{\TT}.
\end{equation*}
Then $\mathcal T_{z\to\widetilde z}$ is a vector transport.
If $(\delta\widetilde{\mathcal U}_1,\ldots,
\delta\widetilde{\mathcal U}_d)$ is the core representation of the
transported vector, then
\begin{align*}
  \R(\delta\widetilde{\mathcal U}_1)&=\mathbf N_1,\\
  \R(\delta\widetilde{\mathcal U}_k)
  &=\widetilde{\mathbf L}_{k-1}^{-1}\mathbf N_k
  \left(\mathbf I_{n_kr_k}
  -\R(\widetilde{\mathcal U}_k)^\top
  \R(\widetilde{\mathcal U}_k)\right),
  \qquad k=2,\ldots,d.
\end{align*}
The matrix sequences used in the computation are defined below.
First, the matrix sequence $(\widetilde{\mathbf L}_k)_{k=0}^{d-1}$ at $\widetilde z$ is defined by $\widetilde{\mathbf L}_0:=\mathbf I_{r_0}$ and
\begin{align*}
  \widetilde{\mathbf L}_k
  :=\ptr_{n_k}\!\left(
  \R(\widetilde{\mathcal U}_k)^\top
  \widetilde{\mathbf L}_{k-1}
  \R(\widetilde{\mathcal U}_k)\right)+2\mathbf I_{r_k},
  \qquad k=1,\ldots,d-1.
\end{align*}
Then the matrices $(\mathbf N_k)_{k=1}^{d}$ are defined as
\begin{align*}
  \mathbf N_k:={}&\left(\mathbf A_{k-1}\R(\mathcal U_k)
  +\mathbf B_{k-1}\R(\delta\mathcal U_k)\right)
  (\mathbf W_{k+1}^\top\otimes\mathbf I_{n_k})+\mathbf B_{k-1}\R(\mathcal U_k)
  (\delta\mathbf W_{k+1}^\top\otimes\mathbf I_{n_k}),\qquad k=1,\ldots,d.
\end{align*}
where the matrix sequences $(\mathbf W_k)_{k=1}^{d+1}$ and $(\delta\mathbf W_k)_{k=1}^{d+1}$ are defined by $\mathbf W_{d+1}:=1$, $\delta\mathbf W_{d+1}:=0$, and
\begin{align*}
  \mathbf W_k
  &:=\R(\widetilde{\mathcal U}_k)
  (\mathbf W_{k+1}\otimes\mathbf I_{n_k})\R(\mathcal U_k)^\top,\qquad k=d,\ldots,1,\\
  \delta\mathbf W_k
  &:=\R(\widetilde{\mathcal U}_k)
  (\mathbf W_{k+1}\otimes\mathbf I_{n_k})
  \R(\delta\mathcal U_k)^\top
  +\R(\widetilde{\mathcal U}_k)
  (\delta\mathbf W_{k+1}\otimes\mathbf I_{n_k})
  \R(\mathcal U_k)^\top,
  \qquad k=d,\ldots,1,
\end{align*}
moreover, starting from $\mathbf A_0:=\mathbf 0$ and
$\mathbf B_0:=\mathbf I_{r_0}$, the matrix sequences $(\mathbf A_k)_{k=1}^{d-1}$ and $(\mathbf B_k)_{k=1}^{d-1}$ are defined as
\begin{align*}
  \mathbf A_k
  &:=\ptr_{n_k}\!\left(
  \R(\widetilde{\mathcal U}_k)^\top
  \left(\mathbf A_{k-1}\R(\mathcal U_k)
  +\mathbf B_{k-1}\R(\delta\mathcal U_k)\right)\right)
  +2\delta\mathbf W_{k+1},\qquad k=1,\ldots,d-1,\\
  \mathbf B_k
  &:=\ptr_{n_k}\!\left(
  \R(\widetilde{\mathcal U}_k)^\top
  \mathbf B_{k-1}\R(\mathcal U_k)\right)+2\mathbf W_{k+1},
  \qquad k=1,\ldots,d-1.
\end{align*}
\end{proposition}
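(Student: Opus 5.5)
The proof has two parts: the vector transport axioms, and the explicit core formulas.

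For the axioms, I will use the standard characterization for embedded submanifolds with projection-based transport, as in \cite{AbsilMahonySepulchre+2008}. The manifold $\mathcal M_{\mathbf r}^{\TT}$ is embedded in the Euclidean space $\widehat{\mathcal M}_{\mathbf r}^{\TT}$, and $\Proj_{\widetilde z}$ is the orthogonal projection onto $\mathrm T_{\widetilde z}\mathcal M_{\mathbf r}^{\TT}$ (\Cref{thm:projection}). Hence $\mathcal T_{z\to\widetilde z}$ is linear, and for $\widetilde z=z$ it reduces to the identity on tangent vectors. Smoothness in $(z,\widetilde z,\xi_z)$ follows because the projector depends smoothly on the base point: its formulas are rational in the cores with invertible $\widetilde{\mathbf L}_k$, and the manifold is embedded. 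With $\widetilde z$ taken as a retraction of $z$ along some tangent vector, this gives the usual definition.

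For the formulas, I apply \Cref{thm:projection} at $\widetilde z$ to the ambient vector $(\mathcal Y,\mathbf Q_1,\ldots,\mathbf Q_{d-1})=(\delta\mathcal X,\delta\mathbf P_1,\ldots,\delta\mathbf P_{d-1})$. The sequence $\widetilde{\mathbf L}_k$ is exactly the $\mathbf L_k$ of that proposition at $\widetilde z$. Let $\widetilde{\mathbf R}_k$ denote the corresponding sequence built from $\mathbf R_0=\vecop(\delta\mathcal X)^\top$ and $\mathbf Q_k=\delta\mathbf P_k$. It then suffices to prove
\[
\widetilde{\mathbf R}_{k-1}(\widetilde{\mathbf X}_{\geq k+1}\otimes\mathbf I_{n_k})=\mathbf N_k .
\]
The key step is to write $\widetilde{\mathbf R}_{k}$ in factored form as
\[
\widetilde{\mathbf R}_{k}=\mathbf A_{k}\mathbf X_{\geq k+1}^\top+\mathbf B_{k}\,\delta\mathbf X_{\geq k+1}^\top ,
\]
and check this by induction on $k$. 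The base case $k=0$ holds because $\vecop(\delta\mathcal X)^\top=\delta\mathbf X_{\geq1}^\top$ by \Cref{lemma:resp-delta}, extended to $k=1$, which matches $\mathbf A_0=\mathbf 0$, $\mathbf B_0=\mathbf I$.

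For the inductive step, I expand $\mathbf X_{\geq k}^\top$ and $\delta\mathbf X_{\geq k}^\top$ with the interface recursions of \Cref{thm:tangent-space}. This writes $\widetilde{\mathbf R}_{k-1}$ as $\bigl(\mathbf A_{k-1}\R(\mathcal U_k)+\mathbf B_{k-1}\R(\delta\mathcal U_k)\bigr)(\mathbf X_{\geq k+1}^\top\otimes\mathbf I_{n_k})+\mathbf B_{k-1}\R(\mathcal U_k)(\delta\mathbf X_{\geq k+1}^\top\otimes\mathbf I_{n_k})$. Next I apply $\ptr_{n_k}(\R(\widetilde{\mathcal U}_k)^\top\,\cdot\,)$, using the identity $\ptr_k(\mathbf M(\mathbf B\otimes\mathbf I))=\ptr_k(\mathbf M)\mathbf B$, which is the second contraction identity of \Cref{sec:matrix} with $\mathbf C=\mathbf I$. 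This produces the $\ptr$ parts of $\mathbf A_k,\mathbf B_k$ multiplied on the right by $\mathbf X_{\geq k+1}^\top$ and $\delta\mathbf X_{\geq k+1}^\top$, respectively. For the term $-2\widetilde{\mathbf X}_{\geq k+1}^\top\delta\mathbf P_k$, the formula of \Cref{thm:tangent-space} together with the gauge $\mathbf X_{\geq k+1}^\top\delta\mathbf X_{\geq k+1}=0$ gives
\[
\delta\mathbf P_k=-\delta\mathbf X_{\geq k+1}\mathbf X_{\geq k+1}^\top-\mathbf X_{\geq k+1}\delta\mathbf X_{\geq k+1}^\top .
\]
The additional contributions are therefore $2(\widetilde{\mathbf X}_{\geq k+1}^\top\delta\mathbf X_{\geq k+1})\mathbf X_{\geq k+1}^\top+2(\widetilde{\mathbf X}_{\geq k+1}^\top\mathbf X_{\geq k+1})\delta\mathbf X_{\geq k+1}^\top$. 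By backward induction from the recursions, $\mathbf W_{k}=\widetilde{\mathbf X}_{\geq k}^\top\mathbf X_{\geq k}$ and $\delta\mathbf W_k=\widetilde{\mathbf X}_{\geq k}^\top\delta\mathbf X_{\geq k}$, so these contributions are exactly the $2\delta\mathbf W_{k+1}$ and $2\mathbf W_{k+1}$ terms of $\mathbf A_k,\mathbf B_k$.

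Finally, $\widetilde{\mathbf R}_{k-1}(\widetilde{\mathbf X}_{\geq k+1}\otimes\mathbf I_{n_k})$ is obtained by expanding in the same way and replacing $\mathbf X_{\geq k+1}^\top\widetilde{\mathbf X}_{\geq k+1}=\mathbf W_{k+1}^\top$ and $\delta\mathbf X_{\geq k+1}^\top\widetilde{\mathbf X}_{\geq k+1}=\delta\mathbf W_{k+1}^\top$, which yields $\mathbf N_k$. For $k=1$, $\widetilde{\mathbf L}_0=\mathbf I$ gives $\R(\delta\widetilde{\mathcal U}_1)=\mathbf N_1$ with no projection factor. The case $k=d$ uses $\mathbf W_{d+1}=1$ and $\delta\mathbf W_{d+1}=0$.

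I expect the main obstacle to be the bookkeeping in the inductive step. The mixed Kronecker products, partial traces and transposes must line up, in particular the ordering $\mathbf X\otimes\mathbf I_{n_k}$ against the right matricization convention, and one must check that the gauge condition at $z$, not at $\widetilde z$, is what removes the cross terms in $\delta\mathbf P_k$. I would verify the index conventions first on $d=2$ before writing the general induction.
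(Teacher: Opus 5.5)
Your proposal is correct and follows essentially the same route as the paper. The shared steps are the backward identities $\mathbf W_k=\widetilde{\mathbf X}_{\geq k}^\top\mathbf X_{\geq k}$ and $\delta\mathbf W_k=\widetilde{\mathbf X}_{\geq k}^\top\delta\mathbf X_{\geq k}$, the forward factorization $\widetilde{\mathbf R}_k=\mathbf A_k\mathbf X_{\geq k+1}^\top+\mathbf B_k\delta\mathbf X_{\geq k+1}^\top$ via $\delta\mathbf P_k=-\delta\mathbf X_{\geq k+1}\mathbf X_{\geq k+1}^\top-\mathbf X_{\geq k+1}\delta\mathbf X_{\geq k+1}^\top$, substitution into \Cref{thm:projection}, and the transport axioms from linearity of the projection and its acting as the identity on $\mathrm{T}_z\mathcal M_{\mathbf r}^{\TT}$. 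Your smoothness remark goes beyond the paper; since right-orthogonal core representatives are unique only up to an orthogonal gauge, it is best justified by the general fact that tangent-space projectors of an embedded submanifold vary smoothly with the base point.
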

\begin{proof}
The two backward recursions compute the mixed products of interface matrices
\begin{equation*}
  \mathbf W_k=\widetilde{\mathbf X}_{\geq k}^\top
  \mathbf X^{}_{\geq k},
  \qquad
  \delta\mathbf W_k=\widetilde{\mathbf X}_{\geq k}^\top
  \delta\mathbf X^{}_{\geq k},
  \qquad k=1,\ldots,d+1.
\end{equation*}
Here the terminal interfaces are $\mathbf X_{\geq d+1}
=\widetilde{\mathbf X}_{\geq d+1}:=1$ and
$\delta\mathbf X_{\geq d+1}:=0$, so the identities hold at $k=d+1$. Assuming them at
$k+1$ and substituting the right-interface recursions and the derivatives
into the left-hand sides gives precisely the recursions defining
$\mathbf W_k$ and $\delta\mathbf W_k$. Backward induction proves the two
identities. The recursion for
$\widetilde{\mathbf L}_k$ is the metric recursion in~\Cref{thm:projection}
at the new base point $\widetilde z$.

Let $\widetilde{\mathbf R}_k$ be the matrices in~\Cref{thm:projection} for
the ambient vector $(\delta\mathcal X,\delta\mathbf P_1,\ldots,
\delta\mathbf P_{d-1})$ at $\widetilde z$. We show by forward induction that
\begin{equation}
  \widetilde{\mathbf R}_k
  =\mathbf A_k\mathbf X_{\geq k+1}^\top
  +\mathbf B_k\delta\mathbf X_{\geq k+1}^\top.
  \label{eq:transport-rk-factorization}
\end{equation}
For $k=0$, this follows from $\mathbf A_0=\mathbf 0$,
$\mathbf B_0=\mathbf I_{r_0}$, and
$\widetilde{\mathbf R}_0=\vecop(\delta\mathcal X)^\top$. The tangent
representation also gives
$\delta\mathbf P_k=-\delta\mathbf X_{\geq k+1}
\mathbf X_{\geq k+1}^\top-\mathbf X_{\geq k+1}
\delta\mathbf X_{\geq k+1}^\top$.
Substituting this identity, the formulas for the mixed interface products above, and the
induction hypothesis at $k-1$ into the recursion for
$\widetilde{\mathbf R}_k$ gives
\begin{align*}
  \widetilde{\mathbf R}_k
  ={}&\left[\ptr_{n_k}\!\left(
  \R(\widetilde{\mathcal U}_k)^\top
  (\mathbf A_{k-1}\R(\mathcal U_k)
  +\mathbf B_{k-1}\R(\delta\mathcal U_k))\right)
  +2\delta\mathbf W_{k+1}\right]
  \mathbf X_{\geq k+1}^\top\\
  &+\left[\ptr_{n_k}\!\left(
  \R(\widetilde{\mathcal U}_k)^\top
  \mathbf B_{k-1}\R(\mathcal U_k)\right)
  +2\mathbf W_{k+1}\right]
  \delta\mathbf X_{\geq k+1}^\top\\
  ={}&\mathbf A_k\mathbf X_{\geq k+1}^\top
  +\mathbf B_k\delta\mathbf X_{\geq k+1}^\top.
\end{align*}
Thus~\eqref{eq:transport-rk-factorization} holds for every $k$.
Using this factorization at $k-1$ gives
\begin{align*}
  \widetilde{\mathbf R}_{k-1}
  (\widetilde{\mathbf X}_{\geq k+1}\otimes\mathbf I_{n_k})=\left(\mathbf A_{k-1}\R(\mathcal U_k)
  +\mathbf B_{k-1}\R(\delta\mathcal U_k)\right)
  (\mathbf W_{k+1}^\top\otimes\mathbf I_{n_k})
  +\mathbf B_{k-1}\R(\mathcal U_k)
  (\delta\mathbf W_{k+1}^\top\otimes\mathbf I_{n_k})
  =\mathbf N_k.
\end{align*}
Substitution into~\Cref{thm:projection}, together with the metric sequence
$\widetilde{\mathbf L}_k$, yields the displayed core formulas. Finally,
orthogonal projection is linear and acts as the identity on its target tangent
space. Hence $\mathcal T_{z\to\widetilde z}$ is linear in $\xi_z$ and
$\mathcal T_{z\to z}$ is the identity on $\mathrm{T}_z\mathcal M_{\mathbf r}^{\TT}$,
which proves the vector-transport claim.
\end{proof}

\subsection{Retraction}
We use the standard definition of a retraction
in~\cite[Definition~4.1.1]{AbsilMahonySepulchre+2008}; see
also~\cite[Definition~3.47]{boumal2020introduction}.
Let $\mathcal M$ be a smooth manifold with tangent bundle $\mathrm{T}\mathcal M$.
A \emph{retraction} on $\mathcal M$ is a smooth map
$\Retr:\mathrm{T}\mathcal M\to\mathcal M$ onto $\mathcal M$ such that, for every
$z\in\mathcal M$, its restriction $\Retr_z:\mathrm{T}_z\mathcal M\to\mathcal M$
satisfies
\begin{equation}\label{eq:retraction}
  \Retr_z(0_z)=z,
  \qquad
  \mathrm D\Retr_z(0_z)=\mathrm{id}_{\mathrm{T}_z\mathcal M},
\end{equation}
where $0_z$ denotes the zero element of $\mathrm{T}_z\mathcal M$, and the second
identity uses the canonical identification
$\mathrm{T}_{0_z}(\mathrm{T}_z\mathcal M)\simeq \mathrm{T}_z\mathcal M$.
Equivalently, for every $\xi_z\in \mathrm{T}_z\mathcal M$, the curve
$t\mapsto\Retr_z(t\xi_z)$ satisfies $\Retr_z(0_z)=z$ and
$\frac{\mathrm d}{\mathrm dt}\Retr_z(t\xi_z)|_{t=0}=\xi_z$.

For the parametrization manifold, the following construction preserves the
right orthogonality of the TT cores.
\begin{proposition}[retraction]\label{thm:retraction}
Let $z=(\mathcal X,\mathbf P_1,\ldots,\mathbf P_{d-1})$ be
represented by right-orthogonal cores
$(\mathcal U_1,\ldots,\mathcal U_d)$, and let tangent vector $\xi_z$ be represented by
$(\delta\mathcal U_1,\ldots,\delta\mathcal U_d)$ satisfying the gauge
conditions.
For $t$ in a neighborhood of zero, define
\begin{align}
  \overline{\mathcal U}_1(t)&:=\mathcal U_1+t\delta\mathcal U_1,
  \nonumber\\
  \R(\overline{\mathcal U}_k(t))^\top
  &:=\left(\R(\mathcal U_k)^\top+t\R(\delta\mathcal U_k)^\top\right)
  \left(\mathbf I+t^2\R(\delta\mathcal U_k)
  \R(\delta\mathcal U_k)^\top\right)^{-1/2},
  \qquad k=2,\ldots,d.
  \label{eq:retraction-cores}
\end{align}
Let $\Retr_z(t\xi_z)$ be the point in $\mathcal M_{\mathbf r}^{\TT}$ generated by these cores. Then $\Retr$ is a retraction.
\end{proposition}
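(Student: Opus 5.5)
The argument splits into three parts: the new cores are well defined and lie in $\mathcal S_{\mathbf r}^{\TT}$; the map is smooth and satisfies $\Retr_z(0_z)=z$; and the curve $t\mapsto\Retr_z(t\xi_z)$ has velocity $\xi_z$ at $t=0$. Smoothness is needed so that $\Retr$ is a smooth map on $\mathrm T\mathcal M_{\mathbf r}^{\TT}$.

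\textbf{Well-definedness.} For $k\ge2$ set $\mathbf M_k(t)=\R(\mathcal U_k)^\top+t\R(\delta\mathcal U_k)^\top$. The gauge conditions~\eqref{eq:gauging} give
\[
\mathbf M_k(t)^\top\mathbf M_k(t)=\mathbf I+t^2\R(\delta\mathcal U_k)\R(\delta\mathcal U_k)^\top ,
\]
because the cross terms vanish. This matrix is symmetric positive definite for every $t$, so its inverse square root exists and depends smoothly on $t$ and on $\delta\mathcal U_k$. It follows that $\R(\overline{\mathcal U}_k(t))\R(\overline{\mathcal U}_k(t))^\top=\mathbf I_{r_{k-1}}$. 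Hence the new cores are right-orthogonal, have full row rank, and lie in $\mathcal S_{\mathbf r}^{\TT}$. In fact this holds for all $t$, not only near zero.

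\textbf{Smoothness.} Composing with the smooth map $\psi$ gives a point of $\mathcal M_{\mathbf r}^{\TT}$. The core representation of $\xi_z$ depends smoothly (linearly) on $\xi_z$ through the bijection of \Cref{thm:tangent-space}. A change of right-orthogonal representative of $z$ acts by orthogonal gauge matrices, and the formula is equivariant under these. So the retraction is independent of the representative and smooth on the tangent bundle. This can be checked locally through the quotient structure $\mathcal S_{\mathbf r}^{\TT}/\!\sim$. At $t=0$ the cores reduce to $\mathcal U_k$, so $\Retr_z(0_z)=z$.

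\textbf{First-order condition.} Expanding gives
\[
\left(\mathbf I+t^2\mathbf D_k\right)^{-1/2}=\mathbf I+O(t^2),
\]
so $\R(\overline{\mathcal U}_k(t))^\top=\R(\mathcal U_k)^\top+t\R(\delta\mathcal U_k)^\top+O(t^2)$. Thus the derivative of every core at $t=0$ is exactly $\delta\mathcal U_k$, and $\overline{\mathcal U}_1$ is already linear in $t$. By the chain rule, the derivative of $\psi(\overline{\mathcal U}_1(t),\dots,\overline{\mathcal U}_d(t))$ at $t=0$ equals the derivative of $\psi$ along $c(t)=(\mathcal U_k+t\delta\mathcal U_k)_k$. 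By \Cref{thm:tangent-space}, that derivative is the tangent vector represented by $(\delta\mathcal U_1,\dots,\delta\mathcal U_d)$, namely $\xi_z$. Hence $\frac{\mathrm d}{\mathrm dt}\Retr_z(t\xi_z)|_{t=0}=\xi_z$.

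The main obstacle is the smoothness and well-definedness on the whole tangent bundle, that is, independence of the chosen right-orthogonal representative. Two right-orthogonal representatives of $z$ differ by orthogonal gauge matrices $\mathbf O_k$, and the gauged tangent representatives transform accordingly. I will verify that $\R(\overline{\mathcal U}_k)$ transforms covariantly. The key step is
\[
(\mathbf O^\top\mathbf A\,\mathbf O)^{-1/2}=\mathbf O^\top\mathbf A^{-1/2}\mathbf O ,
\]
which implies that the generated tensor and projectors are unchanged. Smoothness then follows from smooth local sections of the quotient.
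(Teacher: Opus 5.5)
Your proposal is correct and follows the paper's proof. The gauge conditions remove the cross terms, so the normalized cores are right-orthogonal and lie in $\mathcal S_{\mathbf r}^{\TT}$; the factor $(\mathbf I+t^2\R(\delta\mathcal U_k)\R(\delta\mathcal U_k)^\top)^{-1/2}$ has zero derivative at $t=0$; and \Cref{thm:tangent-space} then identifies the velocity with $\xi_z$. Your extra steps, namely the check that the construction is unchanged under orthogonal gauge changes and the local-section argument for smoothness on the tangent bundle, are not in the paper's proof but are sound; the one detail to add is that a smooth local section of $\psi$ can be right-orthogonalized smoothly (for instance by a right-to-left sweep of polar decompositions).
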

\begin{proof}
Write $\mathbf U_k:=\R(\mathcal U_k)$ and $\delta\mathbf U_k:=\R(\delta\mathcal U_k)$. The right-orthogonality and gauge conditions give
\[
  (\mathbf U_k+t\delta\mathbf U_k)
  (\mathbf U_k+t\delta\mathbf U_k)^\top
  =\mathbf I+t^2\delta\mathbf U_k\delta\mathbf U_k^\top.
\]
Consequently,~\eqref{eq:retraction-cores} satisfies
\[
  \R(\overline{\mathcal U}_k(t))
  \R(\overline{\mathcal U}_k(t))^\top=\mathbf I,
  \qquad k=2,\ldots,d.
\]
Thus the updated cores belong to $\mathcal S_{\mathbf r}^{\TT}$ and generate a point of $\mathcal M_{\mathbf r}^{\TT}$. Clearly $\Retr_z(0_z)=z$.
It remains to verify the differential at zero. The derivative at $t=0$ of
$(\mathbf I+t^2\delta\mathbf U_k\delta\mathbf U_k^\top)^{-1/2}$ is zero. Hence
\[
  \left.\frac{\mathrm d}{\mathrm dt}\overline{\mathcal U}_k(t)\right|_{t=0}
  =\delta\mathcal U_k,
  \qquad k=1,\ldots,d.
\]
Applying~\Cref{thm:tangent-space} to the derivative of the generated point gives
$\frac{\mathrm d}{\mathrm dt}\Retr_z(t\xi_z)|_{t=0}=\xi_z$. Hence both
conditions~\eqref{eq:retraction} hold.
\end{proof}

\section{Optimization for the Rayleigh--Ritz problem}\label{sec:rgd}
The geometric computations developed in the previous section allow the
bounded-rank Rayleigh--Ritz problem to be treated by smooth optimization on
the parametrization manifold. We first derive the Riemannian gradient of the
Rayleigh--Ritz problem and then exploit a Kronecker-product representation of the
operator to evaluate the required products directly from TT cores.

The Rayleigh--Ritz problem~\eqref{eq:final_EEP} is formulated on the parametrization manifold as
\begin{equation}\label{eq:desing-obj}
  \min\,\rho(\phi(z)),\quad\text{s.\,t.}\quad z\in\mathcal M_{\mathbf r}^{\TT}.
\end{equation}
Let
$\mathcal X=\phi(z)$. Since $\mathscr H$ is self-adjoint, differentiating
the Rayleigh quotient gives the ambient Euclidean gradient
\begin{equation}\label{eq:grad}
  \nabla\rho(\mathcal X)
  =\frac{2}{\|\mathcal X\|_{\mathrm{F}}^2}
  \left(\mathscr H\mathcal X-\rho(\mathcal X)\mathcal X\right).
\end{equation}
The objective $F:=\rho\circ\phi$ depends on $z$ only through its tensor
component. Consequently, its ambient gradient on the product space is
$(\nabla\rho(\mathcal X),\mathbf 0,\ldots,\mathbf 0)$, and
\Cref{thm:riemann-grad} yields
\begin{equation}\label{eq:pullback-rgrad}
  \grad F(z)
  =\Proj_z\bigl(\nabla\rho(\mathcal X),\mathbf 0,\ldots,\mathbf 0\bigr).
\end{equation}
Thus, computing the Riemannian gradient reduces to applying $\mathscr H$ to a
TT tensor and projecting the resulting tensor component into
$\mathrm{T}_z\mathcal M_{\mathbf r}^{\TT}$.

\subsection{Efficient computations of gradient}

In many practical applications~\cite{HeinEisertBriegel2004,CockburnXia2022,GongLiuWang2024,CancesKemlinLevitt2024}, the matrix representation of \(\mathscr H\) naturally exhibits a Kronecker-product structure. Exploiting this structure avoids forming \(\mathscr H\mathcal X\) in ambient space and thereby preserves the low-rank structure.
 Specifically, we assume the following
reverse-order Kronecker representation:
\begin{equation}\label{eq:kron-operator}
  \mathbf H=\sum_{\ell=1}^m
  \mathbf H_d^{(\ell)}\otimes\cdots\otimes\mathbf H_1^{(\ell)},
\end{equation}
where $m$ is the number of Kronecker terms, and
$\mathbf H_k^{(\ell)}\in\mathbb R^{n_k\times n_k}$ is the local factor for
mode $k$ in term $\ell$. The reverse order follows from the vectorization
formula in~\Cref{sec:preliminary} and gives the tensor action
$\mathscr H\mathcal X
  =\sum_{\ell=1}^m
  \mathcal X\times_1\mathbf H_1^{(\ell)}\times_2\cdots\times_d\mathbf H_d^{(\ell)}$.
The representation in~\eqref{eq:kron-operator} therefore allows each
Kronecker term to be evaluated separately from its factors without
computing the full matrix
$\mathbf H$ or the full tensor $\mathscr H\mathcal X$.

\begin{lemma}\label{thm:inner-kronecker}
Let $\mathcal X=\mathcal U_1\cdots\mathcal U_d$ and
$\mathcal Y=\mathcal V_1\cdots\mathcal V_d$ be two TT tensors with the same
mode sizes. For each term $\ell=1,\ldots,m$, define the sequence of matrices $(\mathbf C_k^{(\ell)})_{k=1}^{d+1}$ with $\mathbf C_{d+1}^{(\ell)}:=1$ and
$\mathbf C_k^{(\ell)}
  :=\R(\mathcal U_k)
  \left(\mathbf C_{k+1}^{(\ell)}\otimes\mathbf H_k^{(\ell)}\right)
  \R(\mathcal V_k)^\top$,$k=d,\ldots,1$.
Then $\mathbf C_1^{(\ell)}$ is a scalar and
\[
  \langle\mathcal X,\mathscr H\mathcal Y\rangle_{\mathrm{F}}
  =\sum_{\ell=1}^m\mathbf C_1^{(\ell)}.
\]
Since $\mathscr H$ is self-adjoint, the same sum also equals
$\langle\mathcal Y,\mathscr H\mathcal X\rangle_{\mathrm{F}}$.
\end{lemma}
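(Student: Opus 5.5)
By linearity in $\ell$ it suffices to treat a single Kronecker term. We therefore fix $\ell$, write $\mathbf H_k:=\mathbf H_k^{(\ell)}$ and $\mathbf C_k:=\mathbf C_k^{(\ell)}$, and prove
\[
\langle\mathcal X,\mathcal Y\times_1\mathbf H_1\times_2\cdots\times_d\mathbf H_d\rangle_{\mathrm F}=\mathbf C_1 .
\]
Summing over $\ell$ then gives the claim. The final remark follows directly from self-adjointness: $\langle\mathcal X,\mathscr H\mathcal Y\rangle_{\mathrm F}=\langle\mathscr H\mathcal X,\mathcal Y\rangle_{\mathrm F}=\langle\mathcal Y,\mathscr H\mathcal X\rangle_{\mathrm F}$.

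We prove by backward induction on $k$ an interface identity. Let $\mathbf Y_{\geq k}$ denote the right interface matrices of $\mathcal Y$, built from the cores $\mathcal V_k,\ldots,\mathcal V_d$ by the recursion $\mathbf Y_{\geq k}=(\mathbf Y_{\geq k+1}\otimes\mathbf I_{n_k})\R(\mathcal V_k)^\top$ from \Cref{sec:preliminary}. The claim is
\[
\mathbf C_k=\mathbf X_{\geq k}^\top\bigl(\mathbf H_d\otimes\cdots\otimes\mathbf H_k\bigr)\mathbf Y_{\geq k},\qquad k=d+1,\ldots,1,
\]
where empty products equal $1$.

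The base case $k=d+1$ holds because $\mathbf X_{\geq d+1}=\mathbf Y_{\geq d+1}=1$. For the inductive step, substitute both interface recursions and apply the mixed-product rule:
\[
(\mathbf X_{\geq k+1}^\top\otimes\mathbf I_{n_k})(\mathbf H_d\otimes\cdots\otimes\mathbf H_{k+1}\otimes\mathbf H_k)(\mathbf Y_{\geq k+1}\otimes\mathbf I_{n_k})
=\bigl(\mathbf X_{\geq k+1}^\top(\mathbf H_d\otimes\cdots\otimes\mathbf H_{k+1})\mathbf Y_{\geq k+1}\bigr)\otimes\mathbf H_k
=\mathbf C_{k+1}\otimes\mathbf H_k .
\]
Multiplying on the left by $\R(\mathcal U_k)$ and on the right by $\R(\mathcal V_k)^\top$ reproduces the defining recursion for $\mathbf C_k$.

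At $k=1$ we have $r_0=1$, so $\mathbf X_{\geq1}=\vecop(\mathcal X)$ and $\mathbf Y_{\geq1}=\vecop(\mathcal Y)$ are column vectors, and $\mathbf C_1$ is a $1\times1$ scalar. By the vectorization convention in \eqref{eq:kron-operator}, $(\mathbf H_d\otimes\cdots\otimes\mathbf H_1)\vecop(\mathcal Y)=\vecop(\mathcal Y\times_1\mathbf H_1\cdots\times_d\mathbf H_d)$. Hence $\mathbf C_1$ equals the desired Frobenius inner product.

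The main point needing care is index ordering. One must check that the right-interface recursion, with the $\otimes\mathbf I_{n_k}$ factor placed on the right, is compatible with the reverse-order Kronecker product, so that $\mathbf H_k$ lands as the \emph{right} Kronecker factor in the inductive step. This requires that in $\vecop$ of the tail tensor the row index $i_k$ varies fastest. That is exactly what the column-major convention provides, and it is the same convention that underlies the recursion $\mathbf X_{\geq k}=(\mathbf X_{\geq k+1}\otimes\mathbf I_{n_k})\R(\mathcal U_k)^\top$. Once this ordering is confirmed, the remaining steps are the routine Kronecker-product manipulations shown above.
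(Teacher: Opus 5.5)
Your proof is correct and takes the same approach as the paper: for a fixed Kronecker term you run the backward recursion over the cores, then sum over $\ell$ and use self-adjointness. You also state explicitly the interface identity $\mathbf C_k=\mathbf X_{\geq k}^\top(\mathbf H_d\otimes\cdots\otimes\mathbf H_k)\mathbf Y_{\geq k}$, which the paper's brief proof leaves implicit; the paper proves the same identity for $\mathbf W_k^{\mathscr H}$ in the next lemma, and your check of the index ordering is the right one.
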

\begin{proof}
For a fixed $\ell$, multiplying $\R(\mathcal U_d)$,
$\mathbf H_d^{(\ell)}$, and $\R(\mathcal V_d)^\top$ gives
$\mathbf C_d^{(\ell)}$. Repeating the matrix products in the recursion
for $k=d-1,\ldots,1$ yields $\mathbf C_1^{(\ell)}
=\langle\mathcal X,
(\mathcal Y\times_1\mathbf H_1^{(\ell)}\times_2\cdots
\times_d\mathbf H_d^{(\ell)})\rangle_{\mathrm{F}}$.
Summing over $\ell$ proves the first identity, and self-adjointness gives the
second.
\end{proof}

The above result computes the inner product of the Rayleigh quotient used
in~\eqref{eq:grad} without forming
$\mathscr H\mathcal X$ in ambient coordinates. Computing~\eqref{eq:pullback-rgrad}
also requires the projection of each operator term. The following result
concerns the Kronecker product in the projection formula, thereby
preserving the TT representation throughout the computation.
\begin{lemma}\label{thm:project-kron-tt}
Let $z\in\mathcal M_{\mathbf r}^{\TT}$ be represented by right-orthogonal cores
$(\mathcal U_1,\ldots,\mathcal U_d)$, and let
$\widetilde{\mathcal X}=\widetilde{\mathcal U}_1\cdots
\widetilde{\mathcal U}_d$ be another TT tensor with the same mode sizes.
In this lemma, $\mathscr H$ denotes a single Kronecker-product operator
with arbitrary matrices $\mathbf H_k\in\mathbb R^{n_k\times n_k}$,
$k=1,\ldots,d$. Its action has the same form $\mathscr H\widetilde{\mathcal X}=
  \widetilde{\mathcal X}\times_1\mathbf H_1
  \times_2\cdots\times_d\mathbf H_d$ as each term
in~\eqref{eq:kron-operator}.
Then $\Proj_z(\mathscr H\widetilde{\mathcal X},\mathbf 0,\ldots,\mathbf 0)$
has the core representation
$(\delta\mathcal U_1,\ldots,\delta\mathcal U_d)$ given by
\begin{align}
  \R(\delta\mathcal U_1)
  &=\mathbf N_1,\nonumber\\
  \R(\delta\mathcal U_k)
  &=\mathbf L_{k-1}^{-1}\mathbf N_k
  \left(\mathbf I_{n_kr_k}
  -\R(\mathcal U_k)^\top\R(\mathcal U_k)\right),
  \qquad k=2,\ldots,d,
  \label{eq:kron-tt-projection}
\end{align}
where $(\mathbf L_k)_{k=0}^{d-1}$ is the matrix sequence
in~\Cref{thm:projection}. The remaining matrix sequences are defined below.
First, starting from $\mathbf W_{d+1}^{\mathscr H}:=1$, define
\begin{equation}
  \mathbf W_k^{\mathscr H}
  :=\R(\mathcal U_k)
  (\mathbf W_{k+1}^{\mathscr H}\otimes\mathbf H_k)
  \R(\widetilde{\mathcal U}_k)^\top,
  \qquad k=d,\ldots,1.
  \label{eq:kron-mixed-interface}
\end{equation}
Next, starting from $\mathbf B_0:=\mathbf I_{r_0}$, let
\begin{equation*}
  \mathbf B_k=\left\langle
  \R(\mathcal U_k)^\top\mathbf B_{k-1}\R(\widetilde{\mathcal U}_k),
  \mathbf H_k\right\rangle^{n_k},
  \qquad k=1,\ldots,d-1.
\end{equation*}
Here $\langle\cdot,\cdot\rangle^{n_k}$ is the partial inner product
defined in~\Cref{sec:matrix}.
Finally, let
\begin{equation*}
  \mathbf N_k=\mathbf B_{k-1}\R(\widetilde{\mathcal U}_k)
  \left((\mathbf W_{k+1}^{\mathscr H})^\top
  \otimes\mathbf H_k^\top\right),
  \qquad k=1,\ldots,d.
\end{equation*}
\end{lemma}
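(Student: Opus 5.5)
The plan is to specialize \Cref{thm:projection} to the ambient vector $(\mathscr H\widetilde{\mathcal X},\mathbf 0,\ldots,\mathbf 0)$. Since all $\mathbf Q_k=\mathbf 0$, the recursion for $\mathbf R_k$ reduces to $\mathbf R_k=\ptr_{n_k}(\R(\mathcal U_k)^\top\mathbf R_{k-1})$ with $\mathbf R_0=\vecop(\mathscr H\widetilde{\mathcal X})^\top$, exactly as in \Cref{thm:riemann-grad}. The sequence $\mathbf L_k$ is unchanged. Comparing with \eqref{eq:kron-tt-projection}, it then suffices to prove
\[
  \mathbf N_k=\mathbf R_{k-1}(\mathbf X_{\geq k+1}\otimes\mathbf I_{n_k}),\qquad k=1,\ldots,d.
\]
For $k=1$ we have $\mathbf L_0=\mathbf I_{r_0}=1$, and the factor $\mathbf I-\R(\mathcal U_k)^\top\R(\mathcal U_k)$ is absent, which matches the first line of \Cref{thm:projection}.

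The key intermediate claim is the factorization
\[
  \mathbf R_k=\mathbf B_k\,\widetilde{\mathbf X}_{\geq k+1}^\top\bigl(\mathbf H_d\otimes\cdots\otimes\mathbf H_{k+1}\bigr)^\top,
\]
where $\widetilde{\mathbf X}_{\geq k+1}$ is the right interface of $\widetilde{\mathcal X}$. I would prove it by forward induction. The case $k=0$ is just $\vecop(\mathscr H\widetilde{\mathcal X})=\mathbf H\,\widetilde{\mathbf X}_{\geq1}$ for the single reversed-order Kronecker term, with $\mathbf B_0=1$.

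For the inductive step, I would use the recursion $\widetilde{\mathbf X}_{\geq k}=(\widetilde{\mathbf X}_{\geq k+1}\otimes\mathbf I_{n_k})\R(\widetilde{\mathcal U}_k)^\top$ and split the Kronecker factor as $(\mathbf H_{>k}\otimes\mathbf H_k)$. This gives
\[
  \mathbf R_{k-1}=\mathbf B_{k-1}\R(\widetilde{\mathcal U}_k)\bigl(\widetilde{\mathbf X}_{\geq k+1}^\top\mathbf H_{>k}^\top\otimes\mathbf H_k^\top\bigr).
\]
Applying the second identity of \Cref{sec:matrix}, $\ptr_k(\mathbf A(\mathbf B\otimes\mathbf C))=\langle\mathbf A,\mathbf C^\top\rangle^k\mathbf B$, with $\mathbf A=\R(\mathcal U_k)^\top\mathbf B_{k-1}\R(\widetilde{\mathcal U}_k)$ and $\mathbf C=\mathbf H_k^\top$, produces exactly $\mathbf B_k\widetilde{\mathbf X}_{\geq k+1}^\top\mathbf H_{>k}^\top$.

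The remaining point is a Kronecker-block mismatch. The identity is stated for square $n\times n$ outer factors, whereas here the outer factor is rectangular. The blockwise derivation in \Cref{sec:matrix} goes through verbatim for rectangular $\mathbf B$, so I would note this explicitly.

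Second, by backward induction using \eqref{eq:kron-mixed-interface} and the two interface recursions, I would show
\[
  (\mathbf W_k^{\mathscr H})^\top=\widetilde{\mathbf X}_{\geq k}^\top\bigl(\mathbf H_d\otimes\cdots\otimes\mathbf H_k\bigr)^\top\mathbf X_{\geq k}.
\]
Equivalently, $\mathbf W_k^{\mathscr H}=\mathbf X_{\geq k}^\top\mathbf H_{\geq k}\widetilde{\mathbf X}_{\geq k}$. This is the same mixed-interface computation as in \Cref{thm:inner-kronecker}, using $(\mathbf A\otimes\mathbf B)(\mathbf C\otimes\mathbf D)=\mathbf A\mathbf C\otimes\mathbf B\mathbf D$.

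Combining the two factorizations gives
\[
  \mathbf R_{k-1}(\mathbf X_{\geq k+1}\otimes\mathbf I_{n_k})=\mathbf B_{k-1}\R(\widetilde{\mathcal U}_k)\bigl(\widetilde{\mathbf X}_{\geq k+1}^\top\mathbf H_{>k}^\top\mathbf X_{\geq k+1}\otimes\mathbf H_k^\top\bigr)=\mathbf N_k.
\]
For $k=d$ the empty products equal $1$, consistent with $\mathbf W^{\mathscr H}_{d+1}=1$.

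I expect the main obstacle to be index bookkeeping rather than anything conceptual. The column-major vectorization forces the reversed Kronecker order. One must consistently place $\mathbf I_{n_k}$ or $\mathbf H_k$ as the \emph{right} Kronecker factor against $\R(\cdot)$, and track transposes so that $\mathbf H_k^\top$, rather than $\mathbf H_k$, appears in $\mathbf N_k$ while $\mathbf H_k$ appears in the partial inner product defining $\mathbf B_k$. I would verify these placements first on the case $d=2$ before writing the general induction.
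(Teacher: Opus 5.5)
Your proposal is correct and follows essentially the same route as the paper's proof. Both arguments prove the forward-induction factorization $\mathbf R_k=\mathbf B_k\widetilde{\mathbf X}_{\geq k+1}^\top(\mathbf H_d\otimes\cdots\otimes\mathbf H_{k+1})^\top$ via the partial-trace identity, identify $\mathbf W_k^{\mathscr H}=\mathbf X_{\geq k}^\top(\mathbf H_d\otimes\cdots\otimes\mathbf H_k)\widetilde{\mathbf X}_{\geq k}$ by backward induction, and combine the two to obtain $\mathbf N_k=\mathbf R_{k-1}(\mathbf X_{\geq k+1}\otimes\mathbf I_{n_k})$ for substitution into \Cref{thm:projection}. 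Your remark that the partial-trace identity must be applied with a rectangular outer factor is a valid refinement that the paper uses only implicitly.
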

\begin{proof}
Set $\mathbf H_{\geq k}:=\mathbf H_d\otimes\cdots\otimes\mathbf H_k$
and $\mathbf H_{\geq d+1}:=1$, so that
$\mathbf H_{\geq1}$ is the matrix representation of $\mathscr H$.
The backward recursion in~\eqref{eq:kron-mixed-interface} gives
\[
  \mathbf W_k^{\mathscr H}
  =\mathbf X_{\geq k}^\top\mathbf H_{\geq k}
  \widetilde{\mathbf X}_{\geq k},\qquad k=1,\ldots,d.
\]
Indeed, this identity holds at $k=d$, and substituting the right-interface
recursions into the right-hand side proves it by backward induction.
Thus these products are evaluated without forming $\mathscr H\widetilde{\mathcal X}$.

Let $\mathbf R_k$ be the matrices in~\Cref{thm:projection} for the ambient vector
$(\mathscr H\widetilde{\mathcal X},\mathbf 0,\ldots,\mathbf 0)$. We show that
$\mathbf R_k=\mathbf B_k\widetilde{\mathbf X}_{\geq k+1}^\top
\mathbf H_{\geq k+1}^\top$ for $k=0,\ldots,d-1$.
For $k=0$, this follows from $\mathbf B_0=\mathbf I_{r_0}$ and
$\mathbf R_0=\vecop(\mathscr H\widetilde{\mathcal X})^\top
=\widetilde{\mathbf X}_{\geq1}^\top
\mathbf H_{\geq1}^\top$. If the factorization holds at $k-1$,
substituting the right-interface recursion and
$\mathbf H_{\geq k}=\mathbf H_{\geq k+1}\otimes\mathbf H_k$
into the partial-trace recursion for $\mathbf R_k$ and applying the
partial trace identity in~\Cref{sec:matrix} gives
\[
  \mathbf R_k
  =\left\langle
  \R(\mathcal U_k)^\top\mathbf B_{k-1}\R(\widetilde{\mathcal U}_k),
  \mathbf H_k\right\rangle^{n_k}
  \widetilde{\mathbf X}_{\geq k+1}^\top
  \mathbf H_{\geq k+1}^\top
  =\mathbf B_k\widetilde{\mathbf X}_{\geq k+1}^\top
  \mathbf H_{\geq k+1}^\top.
\]
The claimed factorization therefore follows
by induction. Multiplying its instance at $k-1$ by
$\mathbf X_{\geq k+1}\otimes\mathbf I_{n_k}$ and using the definition
of $\mathbf W_{k+1}^{\mathscr H}$ gives $\mathbf N_k$. Substitution into
\Cref{thm:projection} proves
\eqref{eq:kron-tt-projection}.
\end{proof}

\begin{proposition}\label{thm:riemann-grad-kronecker}
Let $z\in\mathcal M_{\mathbf r}^{\TT}$ and
$\mathcal X=\phi(z)$. For the operator in~\eqref{eq:kron-operator},
the Riemannian gradient of $F:=\rho\circ\phi$ is
\begin{equation}
  \grad F(z)=\frac{2}{\|\mathcal X\|_{\mathrm{F}}^2}
  \left(\sum_{\ell=1}^m\xi^{(\ell)}
  -\rho(\mathcal X)\xi^{(0)}\right),
  \label{eq:kron-rgrad-projections}
\end{equation}
where $\xi^{(0)}:=\Proj_z(\mathcal X,\mathbf 0,\ldots,\mathbf 0)$ and
$\xi^{(\ell)}:=\Proj_z(\mathcal X\times_1\mathbf H_1^{(\ell)}
\times_2\cdots\times_d\mathbf H_d^{(\ell)},\mathbf 0,\ldots,\mathbf 0)$
for $\ell=1,\ldots,m$.
\end{proposition}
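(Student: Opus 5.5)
The proof is a short linearity argument built on \eqref{eq:pullback-rgrad}. That identity already gives $\grad F(z)=\Proj_z(\nabla\rho(\mathcal X),\mathbf 0,\ldots,\mathbf 0)$, with $\nabla\rho(\mathcal X)$ as in \eqref{eq:grad}. The plan is to write $\nabla\rho(\mathcal X)$ as a linear combination of tensors of the form $\mathcal X\times_1\mathbf H_1^{(\ell)}\times_2\cdots\times_d\mathbf H_d^{(\ell)}$ and $\mathcal X$, and then push $\Proj_z$ through that combination.

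\emph{Step 1: expand the Euclidean gradient.} Insert the Kronecker representation \eqref{eq:kron-operator} into \eqref{eq:grad}, using the tensor action $\mathscr H\mathcal X=\sum_{\ell=1}^m\mathcal X\times_1\mathbf H_1^{(\ell)}\times_2\cdots\times_d\mathbf H_d^{(\ell)}$ stated after \eqref{eq:kron-operator}. This gives
\[
  \nabla\rho(\mathcal X)=\frac{2}{\|\mathcal X\|_{\mathrm F}^2}\Bigl(\sum_{\ell=1}^m\mathcal X\times_1\mathbf H_1^{(\ell)}\times_2\cdots\times_d\mathbf H_d^{(\ell)}-\rho(\mathcal X)\mathcal X\Bigr).
\]
The scalars $2/\|\mathcal X\|_{\mathrm F}^2$ and $\rho(\mathcal X)$ depend only on the base point $z$. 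They are not affected by the projection.

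\emph{Step 2: use linearity of the projection.} $\Proj_z$ is the orthogonal projection of $\widehat{\mathcal M}_{\mathbf r}^{\TT}$ onto the linear subspace $\mathrm T_z\mathcal M_{\mathbf r}^{\TT}$, so it is linear. Linearity is also visible in \Cref{thm:projection}: $\mathbf L_k$ depends only on $z$, and $\mathbf R_k$ is linear in $(\mathcal Y,\mathbf Q_1,\ldots,\mathbf Q_{d-1})$. The ambient vector $(\nabla\rho(\mathcal X),\mathbf 0,\ldots,\mathbf 0)$ splits into the same linear combination of the vectors $(\mathcal X\times_1\cdots\times_d\mathbf H_d^{(\ell)},\mathbf 0,\ldots,\mathbf 0)$ and $(\mathcal X,\mathbf 0,\ldots,\mathbf 0)$, because the slack components are all zero. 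Applying $\Proj_z$ to each piece gives exactly $\xi^{(\ell)}$ and $\xi^{(0)}$, which is \eqref{eq:kron-rgrad-projections}.

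\emph{Step 3: remark on computability.} Each $\xi^{(\ell)}$ is computed from cores through \Cref{thm:project-kron-tt} with $\widetilde{\mathcal X}=\mathcal X$. The term $\xi^{(0)}$ is the special case of that lemma with all $\mathbf H_k=\mathbf I$. The scalar $\rho(\mathcal X)$ comes from \Cref{thm:inner-kronecker}, and $\|\mathcal X\|_{\mathrm F}^2$ comes from the same lemma with identity factors. This step is not needed for the identity itself, but it explains why the formula is useful.

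There is no real obstacle here. The one point to state explicitly is that the coefficients in Step 1 are fixed scalars at $z$, so linearity applies. A second point is that the core representations add consistently: the gauge conditions \eqref{eq:gauging} are linear in $(\delta\mathcal U_k)$. So the core representation of the combination equals the same combination of the individual core representations, by the uniqueness in \Cref{lemma:horizontal-injective}.
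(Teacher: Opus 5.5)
Your proposal is correct and follows the paper's proof essentially verbatim. You substitute the Kronecker representation into \eqref{eq:grad}, invoke \eqref{eq:pullback-rgrad}, and use linearity of $\Proj_z$, computing each $\xi^{(\ell)}$ (and $\xi^{(0)}$, via identity factors) from \Cref{thm:project-kron-tt}; your remark that core representations combine linearly, because the gauge conditions are linear and the representation is unique, is a correct detail the paper leaves implicit.
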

\begin{proof}
In view of~\eqref{eq:grad} and~\eqref{eq:pullback-rgrad}, the Riemannian gradient is the
projection of the ambient vector with tensor component
$2(\mathscr H\mathcal X-\rho(\mathcal X)\mathcal X)
/\|\mathcal X\|_{\mathrm{F}}^2$ and zero slack components. Linearity of
$\Proj_z$ and the sum in~\eqref{eq:kron-operator} give
\eqref{eq:kron-rgrad-projections}.
Each $\xi^{(\ell)}$ is computed by~\Cref{thm:project-kron-tt} with
$\widetilde{\mathcal X}=\mathcal X$ and
$\mathbf H_k=\mathbf H_k^{(\ell)}$. Taking $\mathbf H_k=\mathbf I_{n_k}$
for every $k$ gives $\xi^{(0)}$.
\end{proof}

Equation~\eqref{eq:kron-rgrad-projections} projects the Kronecker terms and
$\mathcal X$ separately and combines the tangent-core representations only
after the projections have been computed. It therefore avoids constructing
the potentially higher-rank TT representation of $\mathscr H\mathcal X$.

\subsection{Riemannian algorithms}
The Riemannian gradient, retraction, and vector transport developed above
provide the operations needed for the proposed low-rank tensor train Riemannian gradient descent~(LTT-RGD) and low-rank tensor train
Riemannian conjugate gradient~(LTT-RCG).

LTT-RGD takes the negative Riemannian gradient as its search direction.
Armijo backtracking selects a step size that gives sufficient decrease in
the Rayleigh quotient, and the retraction maps the tangent step back to the parametrization
manifold. These steps are summarized in~\Cref{alg:rgd}.
LTT-RCG also uses the previous search direction. Since the search
directions belong to different tangent spaces, the previous direction and
gradient are first transported to the current tangent space using
\Cref{thm:vector-transport}. The new direction combines the negative current
gradient with the transported direction.
\Cref{alg:rcg} gives the resulting iteration, which uses the same line search
and retraction as LTT-RGD.
Both algorithms stop when the Riemannian gradient norm reaches its prescribed
tolerance or the iteration limit is reached.
Under the regularity and backtracking assumptions
in~\cite[Corollary~4.13]{boumal2020introduction}, the Riemannian gradient
norm in LTT-RGD tends to zero. For LTT-RCG, the general line-search
theory ensures that every accumulation point is stationary, if the search directions satisfy the gradient-related
condition~\cite[Theorem~4.3.1]{AbsilMahonySepulchre+2008}.

\begin{algorithm}[htbp]
  \SetAlgoLined
  \caption{Low-rank tensor train Riemannian gradient descent method (LTT-RGD)}
  \label{alg:rgd}
  \KwIn{Initial point $z^{(0)}\in\mathcal M_{\mathbf r}^{\TT}$.}
  \For{$t=0,1,\ldots$ until termination}{
    Compute $g^{(t)}=\grad F(z^{(t)})$ using~\Cref{thm:riemann-grad-kronecker}\;
    \If{the gradient tolerance is satisfied or the iteration limit is reached}{
      Stop\;
    }
    Set $\xi^{(t)}=-g^{(t)}$\;
    Select a step size $\alpha_t$ by Armijo backtracking\;
    Set $z^{(t+1)}=\Retr_{z^{(t)}}(\alpha_t\xi^{(t)})$\;
  }
  \KwOut{Final iterate $z^{(t)}$ and approximate eigenpair
  $(\rho(\phi(z^{(t)})),\phi(z^{(t)}))$.}
\end{algorithm}

\begin{algorithm}[htbp]
  \SetAlgoLined
  \caption{Low-rank tensor train Riemannian
conjugate gradient method (LTT-RCG)}
  \label{alg:rcg}
  \KwIn{Initial point $z^{(0)}\in\mathcal M_{\mathbf r}^{\TT}$.}
  Compute $g^{(0)}=\grad F(z^{(0)})$ and set $\xi^{(0)}=-g^{(0)}$\;
  \For{$t=0,1,\ldots$ until termination}{
    \If{the gradient tolerance is satisfied or the iteration limit is reached}{
      Stop\;
    }
    Select $\alpha_t$ by Armijo backtracking and set
    $z^{(t+1)}=\Retr_{z^{(t)}}(\alpha_t\xi^{(t)})$\;
    Compute $g^{(t+1)}=\grad F(z^{(t+1)})$\;
    Transport $\widetilde g^{(t)}=\mathcal T_{z^{(t)}\to z^{(t+1)}}(g^{(t)})$ and
    $\widetilde\xi^{(t)}=\mathcal T_{z^{(t)}\to z^{(t+1)}}(\xi^{(t)})$\;
    Compute $\beta_t$ using the chosen conjugate-gradient formula\;
    Set $\xi^{(t+1)}=-g^{(t+1)}+\beta_t\widetilde\xi^{(t)}$\;
    \If{$\xi^{(t+1)}$ is not a descent direction}{
      Set $\xi^{(t+1)}=-g^{(t+1)}$\;
    }
  }
  \KwOut{Final iterate $z^{(t)}$ and approximate eigenpair
  $(\rho(\phi(z^{(t)})),\phi(z^{(t)}))$.}
\end{algorithm}

In practice, the implementation reduces the cost of these iterations by carrying out
the required operations with TT cores and small matrices. The backward
products $\mathbf W_k^{\mathscr H}$ are computed once for each Kronecker
term and reused during the forward evaluation of the projected gradient.
The partial trace and partial inner product are evaluated by reshaping
the matrix arguments and using dense matrix multiplication, avoiding
separate operations on individual blocks.

The positive-definite matrices $\mathbf L_k$ in~\Cref{thm:projection}
are handled through matrix factorizations. In the gradient computation,
an SVD of the reshaped product of a factor of $\mathbf L_{k-1}$ and
$\R(\mathcal U_k)$ gives a factor of $\mathbf L_k$. The corresponding
inverse is obtained using the inverse of singular values $2+\sigma_j^2$, avoiding a
separate matrix inversion.
The retraction is also evaluated using an SVD. Set
$\delta\mathbf U_k:=\R(\delta\mathcal U_k)$, and let
$\delta\mathbf U_k=\mathbf Q_k\mathbf\Sigma_k\mathbf V_k^\top$ be an SVD
with $\mathbf Q_k\in\mathbb R^{r_{k-1}\times r_{k-1}}$ orthogonal and
$\mathbf\Sigma_k\in\mathbb R^{r_{k-1}\times r_{k-1}}$ diagonal,
including any zero singular values. The inverse square root in~\eqref{eq:retraction-cores} can then be
evaluated as
\begin{equation}\label{eq:retraction-svd}
  (\mathbf I+t^2\delta\mathbf U_k\delta\mathbf U_k^\top)^{-1/2}
  =\mathbf Q_k(\mathbf I+t^2\mathbf\Sigma_k^2)^{-1/2}\mathbf Q_k^\top.
\end{equation}
Equation~\eqref{eq:retraction-svd} reduces the inverse square root to diagonal
operations on the singular values. During backtracking, the search direction
is fixed, so its SVD can be reused for all trial step sizes; only the diagonal
factors change.

\subsection{Computational complexity}

We estimate the cost of the main operations and one iteration of LTT-RGD
and LTT-RCG. Let $n_1=\cdots=n_d=n$ and $r_1=\cdots=r_{d-1}=r$.
The matrix representation of $\mathscr H$ in~\eqref{eq:kron-operator}
contains $m$ Kronecker-product terms. \Cref{tab:complexity} summarizes
the computational complexity, with $s$ denoting the number of Armijo
trial steps, including the accepted step.
\begin{table}[H]
  \caption{Computational complexity of the main operations and one iteration
  of LTT-RGD and LTT-RCG, with mode size $n$, rank parameter $r$,
  $m$ Kronecker-product terms, and $s$ Armijo trial steps.}\label{tab:complexity}
  \centering
  \begin{tabular}{@{}lllc@{}}
    \toprule
    Operation & Notation & Complexity & Reference \\
    \midrule
    Rayleigh quotient & $\rho(\mathcal X)$ & $\mathcal O(dmn^2r^3)$ & \Cref{thm:inner-kronecker} \\
    Riemannian gradient & $\grad F(z)$ & $\mathcal O(dmn^2r^3)$ & \Cref{thm:riemann-grad-kronecker} \\
    Vector transport & $\mathcal T_{z\to\widetilde z}$ & $\mathcal O(dn^2r^3)$ & \Cref{thm:vector-transport} \\
    Riemannian metric & $\langle\xi,\eta\rangle$ & $\mathcal O(dnr^3)$ & \Cref{thm:inner} \\
    Retraction & $\Retr_z$ & $\mathcal O(dnr^3)$ & \Cref{thm:retraction} \\
    \midrule
    LTT-RGD & --- & $\mathcal O((s+1)dmn^2r^3)$ & \Cref{alg:rgd} \\
    LTT-RCG & --- & $\mathcal O((s+1)dmn^2r^3)$ & \Cref{alg:rcg} \\
    \bottomrule
  \end{tabular}
\end{table}

For $\mathbf A\in\mathbb R^{r\times nr}$ and
$\mathbf B\in\mathbb R^{r\times r}$, the product
$\mathbf A(\mathbf B\otimes\mathbf I_n)$ costs $\mathcal O(nr^3)$
when the computation is evaluated by reshaping $\mathbf A$
and multiplying smaller matrices.
The matrix products in the Rayleigh-quotient and gradient formulas multiply
$r\times nr$ matrices by $nr\times nr$ matrices, costing
$\mathcal O(n^2r^3)$ per mode and Kronecker term. Summing over $d$ modes
and $m$ terms gives $\mathcal O(dmn^2r^3)$.
Vector transport uses projection matrices of the same size without
a sum over Kronecker terms, giving $\mathcal O(dn^2r^3)$.
The matrix recursions for the Riemannian metric and retraction use core products and SVDs of $r\times nr$ matrices, costing $\mathcal O(dnr^3)$.

Each Armijo trial evaluates a retraction and a Rayleigh quotient,
and each iteration computes one Riemannian gradient.
LTT-RCG additionally evaluates vector transport, Riemannian
inner product, and direction update, which do not change the
overall complexity of $\mathcal O((s+1)dmn^2r^3)$ per iteration.
The TT cores and Kronecker factors require $\mathcal O(dnr^2)$ and
$\mathcal O(dmn^2)$ storage, respectively.
For fixed $m,n,r$, and $s$, both the iteration cost and representation
storage grow linearly with $d$, without forming tensors of size $n^d$.

\section{Numerical experiments}\label{sec:experiment}

In this section, we compare the proposed methods with the existing solvers for different problems: 1) the harmonic oscillator first
tests LTT-RGD and LTT-RCG against a known ground state; 
2) the Laplace problem then compares LTT-RCG with full-space eigensolvers;
3) the Schr\"odinger problem extends the comparison to larger tensor orders;
4) A layered cluster problem then tests nonseparable ground states with known TT ranks;
5) finally, a Bose--Einstein model tests LTT-RCG as the inner solver of a nonlinear iteration.
The experiments are run in
MATLAB R2019b under Ubuntu 22.04.3 on a workstation with two Intel Xeon Gold
6330 processors (28 cores at 2.00 GHz per processor, 42 MB cache) and 512 GB
of RAM.
The code that produced the results is available at \url{https://github.com/Pengfei-Hao/LTT-EVPs}.

For the linear eigenvalue comparisons, we use LOBPCG
(locally optimal block preconditioned conjugate gradient)~\cite{Knyazev2001}
from BLOPEX\footnote{Available from~\url{https://github.com/lobpcg/blopex}.} and
PRIMME (preconditioned iterative multimethod eigensolver)~\cite{StathopoulosMcCombs2010},
using its MATLAB interface\footnote{Available from~\url{https://github.com/primme/primme}.}.
These methods store full vectors, whereas LTT-RGD and LTT-RCG use TT cores.
In each test, all methods use the same discrete operator, reference
eigenpair, and initial tensor, represented as a full vector for LOBPCG and
PRIMME. The comparison runs use a common residual tolerance and
iteration limit. Running time is the main measure of efficiency,
since an iteration involves different operations in the different methods.
The reported TT ranks are numerical ranks of the represented tensors;
the prescribed core dimensions remain fixed. Eigenvectors are normalized
and signs aligned with the reference before comparing slices.
The problem sizes and rank parameters are specified below.
Throughout the experiments, a scalar rank parameter $r$ denotes the rank vector 
$\mathbf r=(1,r,\ldots,r,1)$
whenever the rank satisfies the admissible condition; otherwise, the components are reduced to satisfy
the admissibility bounds in~\eqref{eq:tt-rank-admissibility}.

The energy of an iterate $\mathcal X^{(t)}$ is its Rayleigh quotient
$\lambda_t:=\rho(\mathcal X^{(t)})$. The energy error
$|\lambda_t-\lambda_{\mathrm{ref}}|$ is exactly the absolute error in the
estimated smallest eigenvalue, where $\lambda_{\mathrm{ref}}$ is the
reference value. Unless otherwise stated, we report the relative energy error
\[
  \frac{|\lambda_t-\lambda_{\mathrm{ref}}|}{|\lambda_{\mathrm{ref}}|}.
\]
For a fixed nonzero reference value, the relative and absolute errors
differ only by the constant factor $1/|\lambda_{\mathrm{ref}}|$.
The cluster problem has $\lambda_{\mathrm{ref}}=\lambda_*=0$, so we use
the absolute energy error $|\rho(\mathcal X^{(t)})|$ instead. To assess how well a tensor
satisfies the eigenvalue equation, we also use the normalized full-space residual
\begin{equation}\label{eq:eigen-residual}
  \eta_{\mathrm{res}}(\mathcal X)
  :=\frac{\|\mathscr H\mathcal X-\rho(\mathcal X)\mathcal X\|_{\mathrm{F}}}
  {\|\mathcal X\|_{\mathrm{F}}}.
\end{equation}
This quantity measures eigenpair accuracy in the ambient space. A small
Riemannian gradient only indicates approximate stationarity of the optimization problem on the parametrization manifold, whereas a small residual indicates that the
computed tensor approximately satisfies the full eigenvalue equation.

\subsection{Harmonic oscillator}\label{sec:exp-harmonic}
The quantum harmonic oscillator provides a model problem for
the numerical approximation of eigenvalues~\cite{CockburnXia2022}.
Its separable ground state provides a reference for comparing LTT-RGD
and LTT-RCG, to exhibit numerical rank reduction, and to measure computational cost.
In the truncated product eigenbasis, the $d$-dimensional operator $\mathscr H$ has the matrix representation
\begin{equation*}
\mathbf H=\sum_{i=1}^{d}
\mathbf I_{n_d}\otimes\cdots\otimes\mathbf I_{n_{i+1}}\otimes
\mathbf H_i\otimes
\mathbf I_{n_{i-1}}\otimes\cdots\otimes\mathbf I_{n_1},
\end{equation*}
where $\mathbf H_i=\diag(1/2,\ldots,n_i-1/2)$ and
$\mathbf I_{n_i}$ is the identity of size $n_i$. The smallest eigenvalue is
$\lambda_*=d/2$, and the corresponding eigenvector is the tensor product of the  ground-state eigenvector of $\mathbf H_i$. The TT rank of the eigenvector is therefore $(1,1,\ldots,1)$.

The convergence test uses $d=3$, $n_k=20$, and TT-rank parameter
$(1,4,4,1)$, which exceeds the rank of the exact eigenvector.
\Cref{fig:harmonic-oscillator} shows the energy error, residual, and
numerical TT rank of LTT-RCG against iteration count. LTT-RCG reaches comparable
final accuracy with substantially fewer iterations than LTT-RGD and
reduces the numerical TT ranks to $(1,1,1,1)$, as expected for the
separable ground state. 

\begin{figure}[htbp]

\centering
\includegraphics[width=0.9\linewidth]{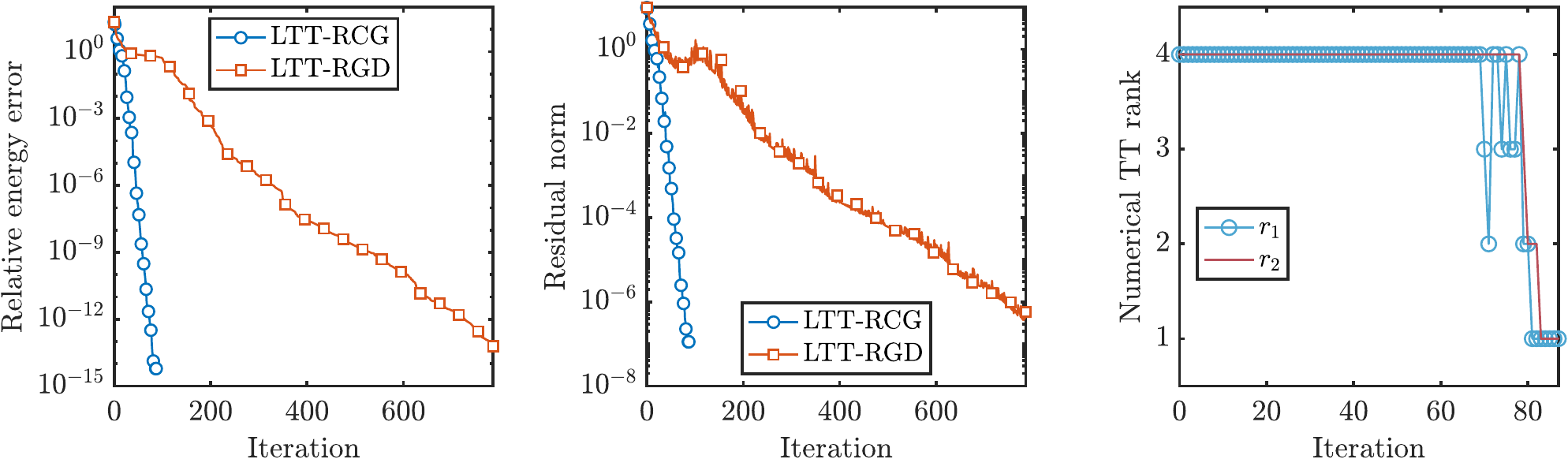}
\caption{Convergence and numerical TT-rank histories for the harmonic oscillator
with $d=3$, $n_k=20$, and TT-rank parameter $(1,4,4,1)$.}
\label{fig:harmonic-oscillator}
\end{figure}

We next measure time per iteration while varying one parameter at a time:
$d=3,\ldots,128$ with $n_k=10$ and rank parameter one;
$n_k=10,\ldots,80$ with $d=4$ and rank parameter one; and
$r=1,\ldots,16$ with $d=4$ and $n_k=10$.
The dimension sweep uses an iteration limit of 50, while the rank sweep
is run to convergence. \Cref{fig:harmonic-scaling} shows the effects
of tensor order, mode size, and rank parameter on the cost of an iteration.
LTT-RCG has slightly higher computation cost per iteration than LTT-RGD, but both methods scale polynomially with tensor order $d$, mode size $n$, and  rank parameter $r$, as expected from the complexity analysis in~\Cref{tab:complexity},  avoiding the curse of dimensionality.

\begin{figure}[htbp]
\centering
\includegraphics[width=0.9\linewidth]{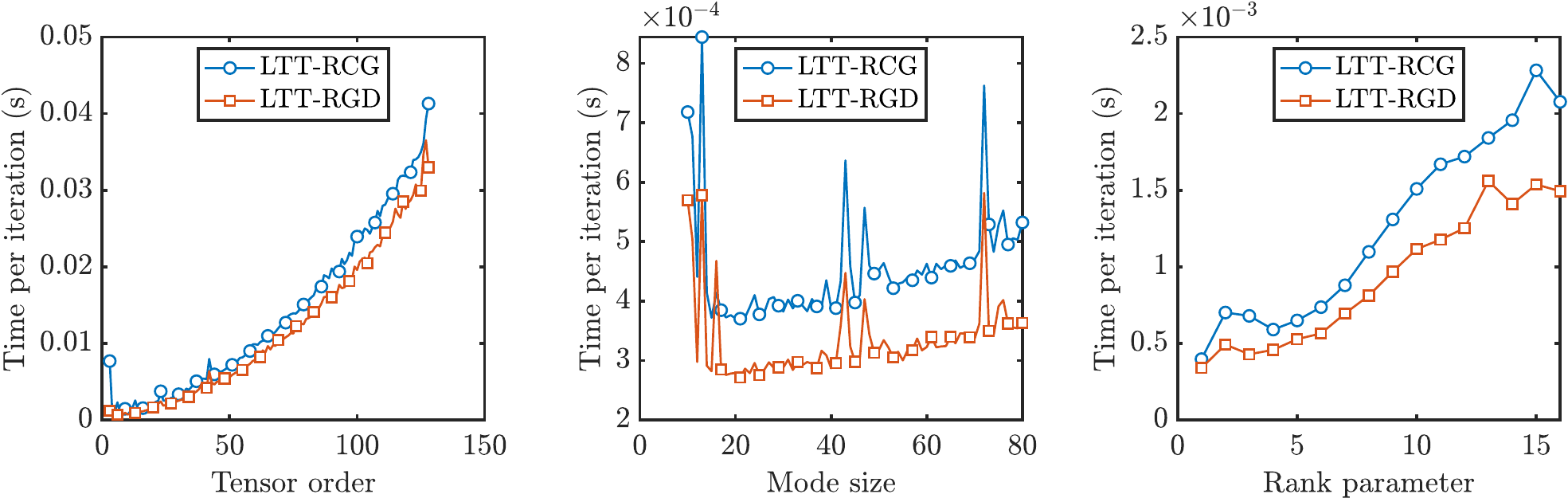}
\caption{Time per iteration for the harmonic oscillator as the tensor order,
mode size, and rank parameter vary. The sweeps use $d=3,\ldots,128$
with $n_k=10$ and $r=1$; $n_k=10,\ldots,80$ with $d=4$ and $r=1$;
and $r=1,\ldots,16$ with $d=4$ and $n_k=10$, respectively.}
\label{fig:harmonic-scaling}
\end{figure}

\subsection{Laplace eigenvalue problem}\label{sec:exp-laplace}
The Dirichlet Laplacian determines the vibration modes of a membrane
with a fixed boundary~\cite{GongLiuWang2024}. We use the separable
$d$-dimensional form to compare LTT-RCG, LOBPCG, and PRIMME against
a known discrete eigenpair. Consider
\begin{equation*}
\begin{cases}
-\Delta\phi=\lambda\phi,&\text{in}\ \Omega,\\
\phi=0,&\text{on}\ \partial\Omega,
\end{cases}
\end{equation*}
where $\Omega={\left[0,\pi\right]}^{d}$. On a uniform grid, with stepsize $h=\pi/(n_k+1)$ in each mode, we use the
finite-difference matrix where the common factor $h^{-2}$ is omitted:
\begin{equation*}
\mathbf H=\sum_{i=1}^{d}
\mathbf I_{n_d}\otimes\cdots\otimes\mathbf I_{n_{i+1}}\otimes
\mathbf H_i\otimes
\mathbf I_{n_{i-1}}\otimes\cdots\otimes\mathbf I_{n_1},
\qquad
\mathbf H_i=\operatorname{tridiag}(-1,2,-1).
\end{equation*}
For the one-dimensional tridiagonal matrix $\mathbf H _i$ of size $n$, the smallest eigenvalue is $2-2\cos(\pi/(n+1))$; hence the reference eigenvalue of the separable $d$-dimensional operator is the sum of the one-dimensional smallest eigenvalues. The corresponding eigenvector is similarly separable, with TT rank $(1,1,\ldots,1)$.

The comparison uses $d=4$, $n_k=30$, and TT-rank parameter $(1,2,2,2,1)$.
The reference eigenvalue belongs to the discrete matrix, so the reported
error excludes discretization error. \Cref{fig:laplace-diagnostics}
compares energy errors and residuals against time and shows the
numerical TT ranks of LTT-RCG.
All three methods reach comparable final energy errors and residuals,
while LTT-RCG requires less computation time. Its numerical TT ranks
decrease to $(1,1,1,1,1)$, as expected for the exact rank of the  eigenvector.
\begin{figure}[htbp]
\centering
\includegraphics[width=0.90\linewidth]{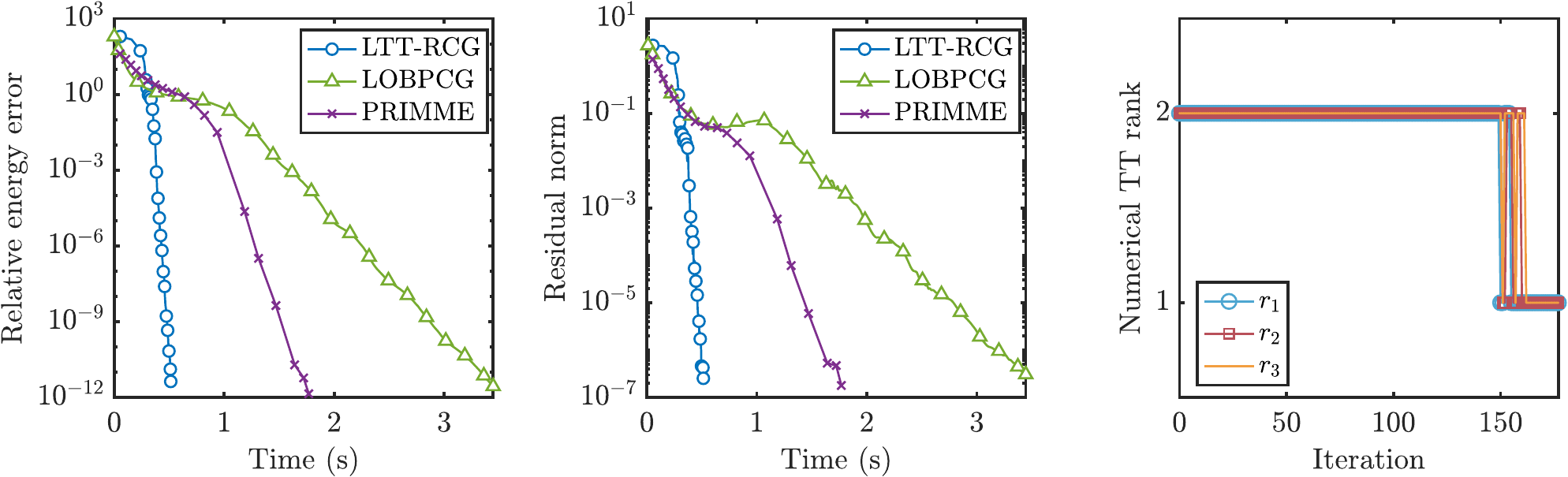}

\caption{
Convergence of LTT-RCG, LOBPCG, and PRIMME and numerical
TT ranks of LTT-RCG for the Laplace problem with $d=4$, $n_k=30$,
and TT-rank parameter $(1,2,2,2,1)$.
}
\label{fig:laplace-diagnostics}
\end{figure}

\Cref{fig:laplace-eigenvector-error} compares the mode-1 marginal
density and central-slice errors of the computed eigenvectors.
The marginal densities agree with the reference solution, while the
slice errors show the entrywise accuracy.

\begin{figure}[htbp]
\centering
\includegraphics[width=\linewidth]{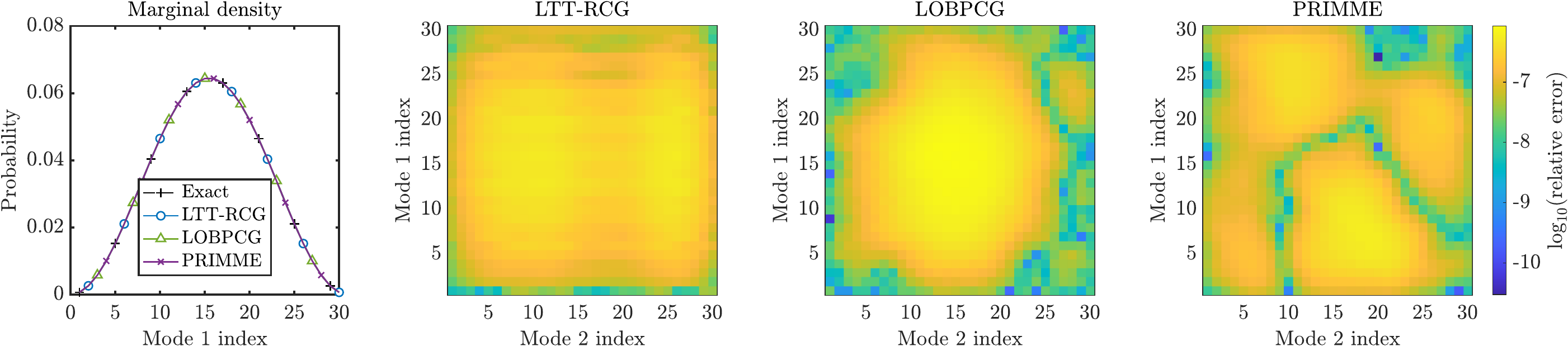}
\caption{Mode-1 marginal density and entrywise absolute errors of central
Laplace eigenvector slices.}
\label{fig:laplace-eigenvector-error}
\end{figure}

\subsection{Schrödinger eigenvalue problem}\label{sec:exp-schrodinger}
Periodic Schr\"odinger eigenvalue problems arise in electronic-structure
calculations for crystalline materials~\cite{CancesKemlinLevitt2024}.
 We use a separable cosine potential
to compare the eigensolvers and test computational cost at larger
tensor orders. The eigenvalue problem is
\begin{equation*}
\begin{aligned}
-\Delta\phi+V\phi=E\phi,&\quad\text{in}\ \Omega,
\end{aligned}
\end{equation*}
with periodic boundary conditions on
$\Omega=[0,2\pi]^d$ and potential
$V(x):=-\sum_{k=1}^d c_k\cos(x_k)$. We use a Fourier spectral
discretization with frequencies $\{-N,\ldots,N\}$ in each mode, so that
$n_k=n=2N+1$. The resulting Kronecker-sum matrix is
\begin{equation*}
\mathbf H=\sum_{i=1}^{d}
\mathbf I_{n_d}\otimes\cdots\otimes\mathbf I_{n_{i+1}}\otimes
\mathbf H_i\otimes
\mathbf I_{n_{i-1}}\otimes\cdots\otimes\mathbf I_{n_1},
\end{equation*}
where the tridiagonal matrices $\mathbf H_k\in\mathbb R^{n\times n}$ have entries
\begin{equation*}
  (\mathbf H_k)_{ab}:=
  \begin{cases}
    (a-N-1)^2, & a=b,\\
    -c_k/2, & |a-b|=1,\\
    0, & \text{otherwise},
  \end{cases}
  \qquad a,b=1,\ldots,n.
\end{equation*}
Throughout this experiment, $N=7$ and
$c_k:=0.2+0.6(k-1)/(d-1)$ for $k=1,\ldots,d$.

The fixed-size comparison uses $d=5$, $n_k=15$, and TT-rank parameter
$(1,2,2,2,2,1)$.
Because this Hamiltonian is a
Kronecker sum, its reference ground-state energy and eigenvector are assembled
from high-accuracy eigensolutions of the matrices $\mathbf{H}_k$. LTT-RCG is
compared with full-vector LOBPCG and PRIMME using the same initial guesses.
\Cref{fig:schrodinger} reports energy errors and residuals against
time, together with numerical TT ranks against iteration count.
All three solvers reach comparable final energy errors and residuals,
while LTT-RCG requires less computation time.
\begin{figure}[htbp]
\centering
\includegraphics[width=0.90\linewidth]{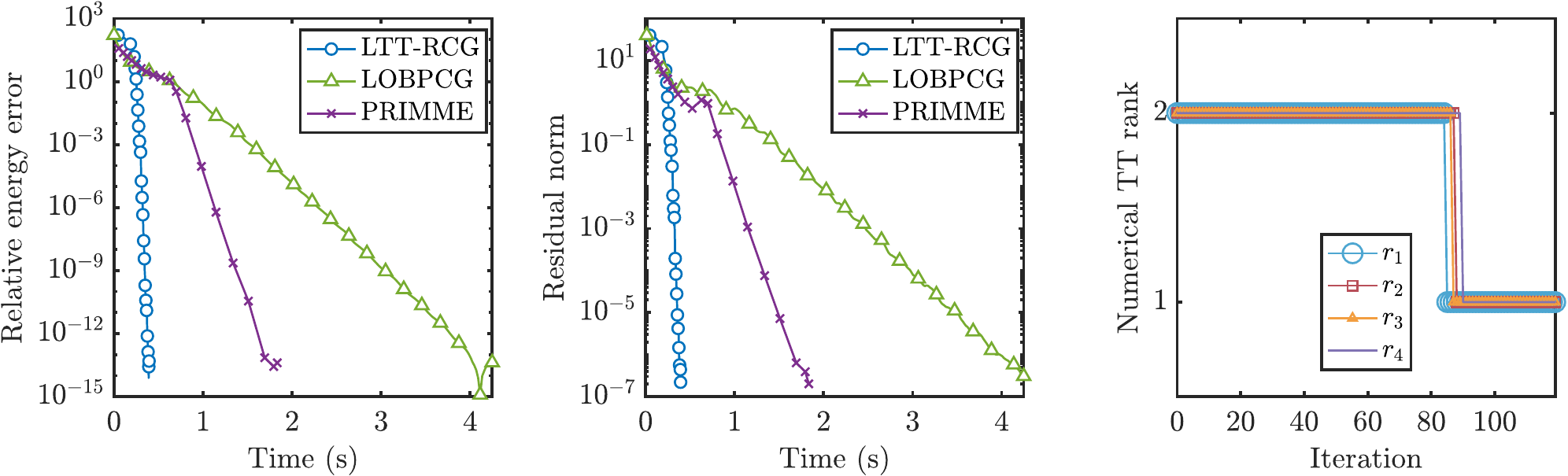}
\caption{
Convergence of LTT-RCG, LOBPCG, and PRIMME and numerical
TT ranks for the 
Schrödinger problem with $n_k=15$ and TT-rank parameter $(1,2,2,2,2,1)$.}
\label{fig:schrodinger}
\end{figure}

\Cref{fig:schrodinger-eigenvector-error} compares the mode-1 marginal
Fourier-space density with the reference and reports central-slice errors.
The Fourier-space marginal density computed by LTT-RCG reproduces the concentration near zero frequency in the reference solution, while its slice errors indicate accurate recovery of the ground-state eigenvector.

\begin{figure}[htbp]
\centering
\includegraphics[width=\linewidth]{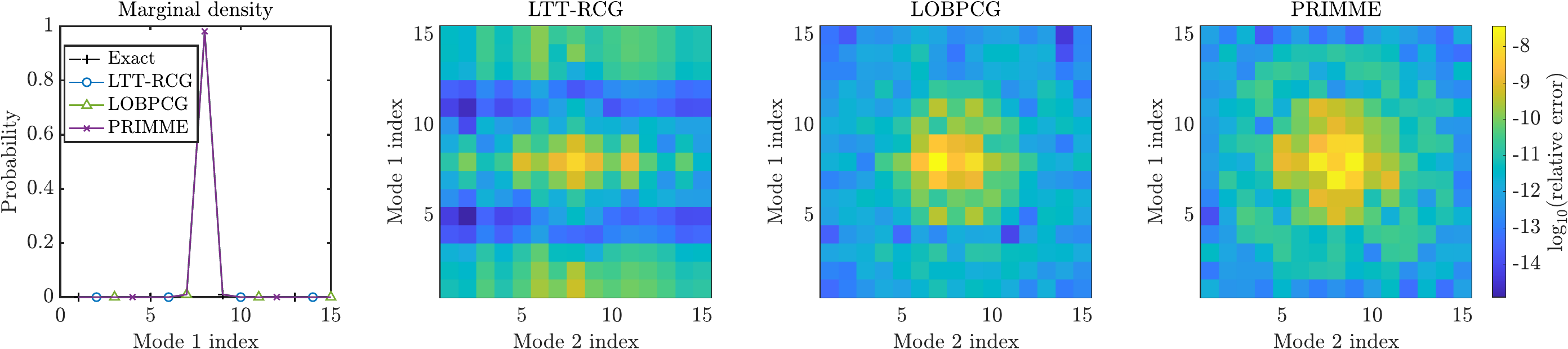}
\caption{Mode-1 marginal Fourier-space density and entrywise absolute
errors of central Schrödinger eigenvector slices.}
\label{fig:schrodinger-eigenvector-error}
\end{figure}

Finally, we increase the tensor order while keeping $n_k=15$.
\Cref{tab:schrodinger-cases} reports mean times and final residuals over
three repetitions, with the rank parameters listed separately for each test.
The memory column gives the size of one full vector, not the total memory
used by a solver. Full-space comparisons are omitted above the
problem-size limit.
The full-space solvers are faster at some small tensor orders, whereas
LTT-RCG becomes faster as the full-vector size increases and remains
applicable beyond the problem-size limit. The final
residual increases at the largest orders. Omitted baseline entries indicate the size limit,
not an algorithmic failure.
\begin{table}[htbp]
\caption{Mean timing and final normalized residual over three Schrödinger
repetitions with TT-rank parameter $(1,r,\ldots,r,1)$.
Times are in seconds, and memory is the
storage required by one full double-precision vector in decimal units. A dash
denotes that the full-space method exceeded the size limit.}
\label{tab:schrodinger-cases}
\centering
\setlength{\tabcolsep}{4pt}
\begin{tabular}{@{}rrlrrrrrrr@{}}
\toprule
\multirow{2}{*}{$d$} & \multirow{2}{*}{$r$} & \multicolumn{1}{c}{\multirow{2}{*}{$n^d$}} & \multirow{2}{*}{Memory} & \multicolumn{2}{c}{LTT-RCG} & \multicolumn{2}{c}{LOBPCG} & \multicolumn{2}{c}{PRIMME} \\
\cmidrule(lr){5-6}\cmidrule(lr){7-8}\cmidrule(lr){9-10}
 &  &  &  & Time (s) & Residual & Time (s) & Residual & Time (s) & Residual \\
\midrule
2   & 2 & $2.25\times10^{2}$   & 1.8 KB  & $0.168$ & $2.56\times10^{-7}$ & $0.079$ & $4.49\times10^{-7}$ & $0.006$ & $2.25\times10^{-7}$ \\
3   & 2 & $3.38\times10^{3}$   & 27 KB   & $0.115$ & $2.48\times10^{-7}$ & $0.114$ & $3.24\times10^{-7}$ & $0.007$ & $2.10\times10^{-7}$ \\
4   & 2 & $5.06\times10^{4}$   & 405 KB  & $0.126$ & $2.51\times10^{-7}$ & $1.067$ & $3.25\times10^{-7}$ & $0.515$ & $2.34\times10^{-7}$ \\
5   & 2 & $7.59\times10^{5}$   & 6.08 MB & $0.155$ & $2.73\times10^{-7}$ & $4.193$ & $3.22\times10^{-7}$ & $1.832$ & $2.01\times10^{-7}$ \\
6   & 4 & $1.14\times10^{7}$   & 91.1 MB & $0.310$ & $2.42\times10^{-7}$ & $33.419$ & $3.08\times10^{-7}$ & $27.291$ & $2.25\times10^{-7}$ \\
8   & 4 & $2.56\times10^{9}$   & 20.5 GB & $0.289$ & $2.49\times10^{-7}$ & -- & -- & -- & -- \\
16  & 8 & $6.57\times10^{18}$  & 52.5 EB & $3.099$ & $3.79\times10^{-7}$ & -- & -- & -- & -- \\
32  & 8 & $4.31\times10^{37}$  & $>1$ YB & $15.276$ & $1.10\times10^{-6}$ & -- & -- & -- & -- \\
64  & 2 & $1.86\times10^{75}$  & $>1$ YB & $10.218$ & $1.34\times10^{-6}$ & -- & -- & -- & -- \\
128 & 1 & $3.46\times10^{150}$ & $>1$ YB & $33.530$ & $3.37\times10^{-6}$ & -- & -- & -- & -- \\
\bottomrule
\end{tabular}
\end{table}

\subsection{Layered cluster eigenvalue problem}\label{sec:exp-cluster}

The preceding linear examples have rank-one eigenvectors associated with
the smallest eigenvalues. We next consider a structured matrix motivated
by cluster models in quantum spin systems~\cite{HeinEisertBriegel2004}.
Its smallest eigenpair is explicitly known, and the exact TT ranks of the
eigenvector are controlled by the number of layers. This provides a test
of the methods for nonseparable solutions.

Let $d\geq2$ be the tensor order and $p\geq1$ the number of layers. Set
$q:=2^p$ and take $n_k=n\geq q$. For each mode, let matrix
$\mathbf B=[\mathbf b_0,\ldots,\mathbf b_{q-1}]\in\mathbb R^{n\times q}$
with orthonormal columns given by the first $q$ cosine modes:
\[
  (\mathbf b_a)_j:=
  \begin{cases}
    n^{-1/2}, & a=0,\\
    (2/n)^{1/2}\cos\bigl(\pi a(j-\tfrac12)/n\bigr), & 1\leq a<q,
  \end{cases}
  \qquad j=1,\ldots,n.
\]
Index these columns also by binary vectors $\mathbf s\in\{0,1\}^p$,
writing $\mathbf b_{\mathbf s}:=\mathbf b_{a(\mathbf s)}$ with
$a(\mathbf s):=\sum_{\ell=1}^p2^{\ell-1}s_\ell$. Define the local
$n\times n$ matrices
\[
  \mathbf P:=\mathbf B\mathbf B^\top,\qquad
  \mathbf X_\ell:=\sum_{\mathbf s\in\{0,1\}^p}
    \mathbf b_{\mathbf s\oplus\mathbf e_\ell}\mathbf b_{\mathbf s}^\top,
  \qquad
  \mathbf Z_\ell:=\sum_{\mathbf s\in\{0,1\}^p}
    (-1)^{s_\ell}\mathbf b_{\mathbf s}\mathbf b_{\mathbf s}^\top,
\]
where $\ell=1,\ldots,p$, $\mathbf e_\ell$ is the $\ell$-th coordinate
vector in $\mathbb R^p$, and $\oplus$ denotes componentwise addition modulo two, thus $\mathbf s\oplus\mathbf e_\ell$ flips the $\ell$-th bit. The matrix $\mathbf P$ is the orthogonal projector into
$\operatorname{range}(\mathbf B)$, while $\mathbf X_\ell$ permutes the
basis vectors and $\mathbf Z_\ell$ changes the signs. Let $\mathbf I_n$
denote the $n\times n$ identity and set
$\mathbf D:=p(\mathbf I_n-\mathbf P/2)$. The discrete eigenvalue problem is
$\mathbf H\vecop(\mathcal X)=\lambda\vecop(\mathcal X)$, with
$\mathbf H\in\mathbb R^{n^d\times n^d}$ given explicitly by
\begin{equation}\label{eq:cluster-eigenproblem}
  \begin{aligned}[b]
    \mathbf H
    ={}&\sum_{i=1}^{d}
      \mathbf I_n^{\otimes(d-i)}\otimes\mathbf D\otimes
      \mathbf I_n^{\otimes(i-1)}\\
    &-\frac12\sum_{\ell=1}^{p}\sum_{i=2}^{d-1}
      \mathbf I_n^{\otimes(d-i-1)}\otimes
      \mathbf Z_\ell\otimes\mathbf X_\ell\otimes\mathbf Z_\ell\otimes
      \mathbf I_n^{\otimes(i-2)}\\
    &-\frac12\sum_{\ell=1}^{p}\left(
      \mathbf I_n^{\otimes(d-2)}\otimes
      \mathbf Z_\ell\otimes\mathbf X_\ell
      +\mathbf X_\ell\otimes\mathbf Z_\ell\otimes
      \mathbf I_n^{\otimes(d-2)}\right).
  \end{aligned}
\end{equation}
The double sum couples three consecutive modes, and the final sum
contains the two boundary couplings. The factors follow the
reverse mode order in~\eqref{eq:kron-operator}. Thus
$\mathbf H$ is a sum of $d(p+1)$ Kronecker products and can be applied
directly to TT cores.

The smallest eigenvalue of~\eqref{eq:cluster-eigenproblem} is
$\lambda_*=0$. In physical terminology, $\mathcal X_*$ represents the ground state, and the corresponding normalized eigenvector is
\[
  \vecop(\mathcal X_*)=
  q^{-d/2}\!\sum_{\mathbf s_1,\ldots,\mathbf s_d\in\{0,1\}^p}
  (-1)^{\sum_{i=1}^{d-1}\mathbf s_i\cdot\mathbf s_{i+1}}
  \mathbf b_{\mathbf s_d}\otimes\cdots\otimes\mathbf b_{\mathbf s_1}.
\]
The restriction of $\mathbf H$ to
$\operatorname{range}(\mathbf B)^{\otimes d}$ is orthogonally similar to a
Kronecker sum of $pd$ copies of $\diag(0,1)$. The restriction to the
orthogonal complement of this subspace has all eigenvalues at least $p$.
Consequently, $\mathbf H$ is positive semidefinite, its zero eigenvalue is
simple, and its smallest positive eigenvalue is one. 
For every $k=1,\ldots,d-1$, the unfolding
$\mathbf X_*^{\langle k\rangle}$ has exactly $q$ nonzero singular values,
all equal to $q^{-1/2}$. These values are preserved by the orthonormal
matrix $\mathbf B$, hence
\[
  \rankTT(\mathcal X_*)=(1,q,\ldots,q,1).
\]
A single layer therefore gives TT rank $(1,2,\ldots,2,1)$, and
two layers give TT rank $(1,4,\ldots,4,1)$. With rank parameter
$(1,r,\ldots,r,1)$, the exact eigenvector lies outside the feasible set
when $r<q$ and is representable when $r\geq q$. The rank-parameter sweep
tests the solvers on both sides of this threshold.

The reported rank sweep uses $d=4$, $n_k=10$, and $p=2$, so the
exact TT ranks are $(1,4,4,4,1)$. We vary $r$ over $1,\ldots,8$
and compare LTT-RCG with LTT-RGD, starting both methods from the
same random TT tensor at each rank parameter.
 Since
$\lambda_*=0$, we report the absolute energy error
$|\rho(\mathcal X)-\lambda_*|$ together with the residual
in~\eqref{eq:eigen-residual}.
\Cref{fig:cluster-two-layers} shows the final eigenvalue errors and residuals.
The accuracy improves sharply at $r=4$, when the exact eigenvector
becomes representable. For $r=1,2,3$, both methods retain nonzero
eigenvalue errors and residuals of at least $0.70$. For $r\geq4$,
LTT-RCG attains eigenvalue errors below $5\times10^{-14}$ and satisfies
the residual tolerance. LTT-RGD reaches eigenvalue errors below
$4\times10^{-13}$ and satisfies the residual tolerance for most rank parameters.
For $r>4$, the exact eigenvector has a TT rank smaller than the prescribed rank parameter. The proposed parametrization enables such lower-rank tensors to be handled within a smooth Riemannian framework. The TT representation alone provides a compact tensor format but does not resolve the nonsmoothness of the bounded-rank tensor space. By representing tensors of different TT ranks on a single smooth manifold, the parametrization allows the method to proceed even when the numerical TT rank is lower than the prescribed rank parameter.
\begin{figure}[htbp]
\centering
\includegraphics[width=0.6\linewidth]{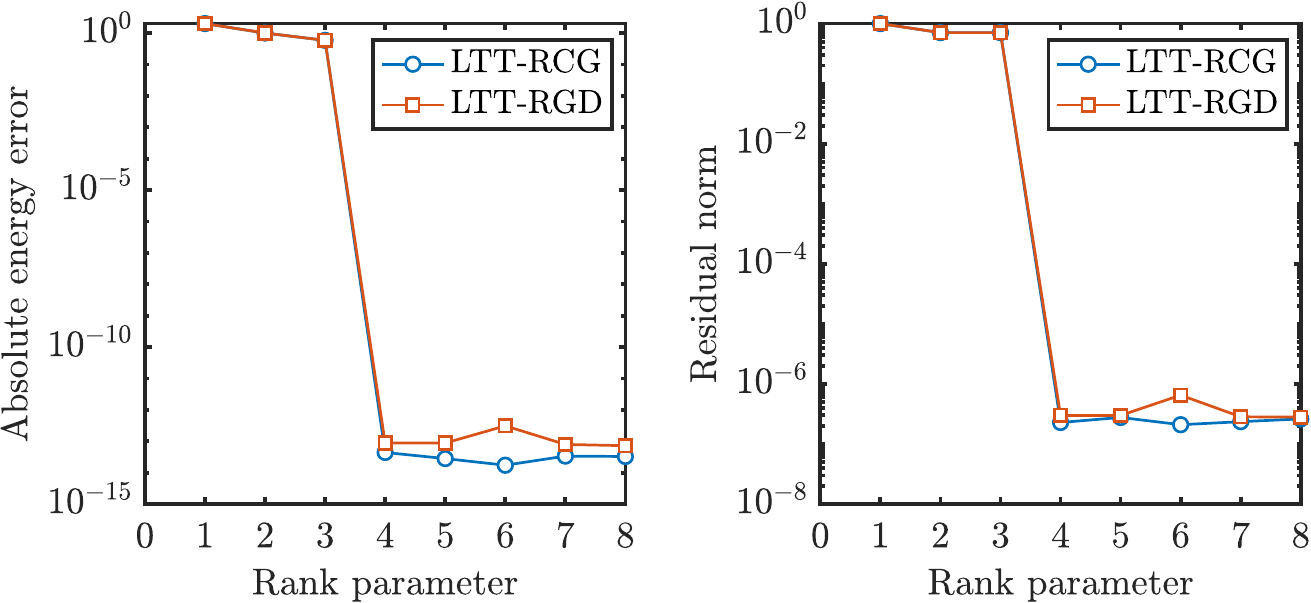}
\caption{Final absolute eigenvalue error and normalized residual for the
two-layer cluster problem with $d=4$, $n_k=10$, and exact TT ranks
$(1,4,4,4,1)$.}
\label{fig:cluster-two-layers}
\end{figure}

\subsection{Nonlinear eigenvalue problem}\label{sec:exp-nonlinear}
The Gross--Pitaevskii equation describes a dilute Bose--Einstein
condensate in the mean-field approximation~\cite{Henning2025}.
 Its density-dependent
potential leads to a nonlinear eigenvalue problem, for which we test
LTT-RCG as an inner eigensolver. Consider
\begin{equation*}
  \left(-\Delta+|x|^2+\epsilon|\phi|^2\right)\phi=\mu\phi,
  \qquad
  \int_{\mathbb R^d}|\phi(x)|^2\,\mathrm dx=1,
\end{equation*}
where $\epsilon\geq0$ is the interaction strength and $\mu$ is the chemical
potential. A frozen-potential fixed-point iteration is used. Given
$\phi^{(k)}$ and its density
$\varrho^{(k)}:=|\phi^{(k)}|^2$, the next iterate is the normalized ground
state of the linear Hamiltonian
\begin{equation*}
  \mathscr H^{(k)}:=-\Delta+|x|^2+\epsilon\varrho^{(k)},
  \qquad
  \phi^{(k+1)}\in
  \argmin_{\|\phi\|_{L^2}=1}
  \langle\phi,\mathscr H^{(k)}\phi\rangle_{L^2}.
\end{equation*}
The inner ground-state problem is solved by LTT-RCG, after which the density
is updated by $\varrho^{(k+1)}=|\phi^{(k+1)}|^2$.

The domain $[-3,3]^3$ is discretized using $50$ points per mode, with
$\epsilon=1$ and TT-rank parameter $(1,1,1,1)$.
We use uniform-grid quadrature with spacing $h$ and normalize each
iterate to satisfy $h^3\|\mathcal X\|_{\mathrm F}^2=1$.
The first inner iteration uses zero density and a
random TT initial guess obtained from independent standard normal
core entries, followed by right orthogonalization and normalization.
Subsequent solves use the preceding normalized solution both to
construct the frozen potential and as the initial guess.
The outer iteration stops when the relative density change
$\delta_k$, defined below, is less than $10^{-8}$ or after ten steps.
The nonlinear density and its pointwise action are evaluated as full
arrays, while the linear part retains its Kronecker-sum representation.
This test therefore assesses the eigensolver within the nonlinear
iteration, without addressing storage scalability at large tensor orders.

We test the outer iteration through the chemical potential, energy,
and relative density change.
The corresponding quantities are
\begin{align*}
  \mu_k
  &:=\langle\phi^{(k)},(-\Delta+|x|^2)\phi^{(k)}\rangle_{L^2}
    +\epsilon\int_{\mathbb R^d}|\phi^{(k)}|^4\,\mathrm dx,\\
  \mathcal E_k
  &:=\langle\phi^{(k)},(-\Delta+|x|^2)\phi^{(k)}\rangle_{L^2}
    +\frac{\epsilon}{2}\int_{\mathbb R^d}|\phi^{(k)}|^4\,\mathrm dx,\\
  N_k
  &:=\int_{\mathbb R^d}|\phi^{(k)}|^2\,\mathrm dx,\qquad
  \delta_k:=
  \frac{\|\varrho^{(k)}-\varrho^{(k-1)}\|_{L^2}}
       {\|\varrho^{(k-1)}\|_{L^2}}.
\end{align*}
Here $\mathcal E_k$ is the Gross--Pitaevskii energy, $N_k$ is the particle
number, and $\delta_k$ measures the relative density change.
In particular, $\mu_k-\mathcal E_k$ equals one half of the
interaction-energy contribution.
\Cref{fig:bose-einstein-inner} shows the Riemannian gradient norm histories of the inner iterations. The Riemannian gradient norm decreases substantially in the first three subproblems. In the last subproblem, the inner solver terminates after one iteration, yielding zero relative density change and thus satisfying the outer stopping criterion.
\begin{figure}[htbp]
  \centering
\includegraphics[width=\linewidth]{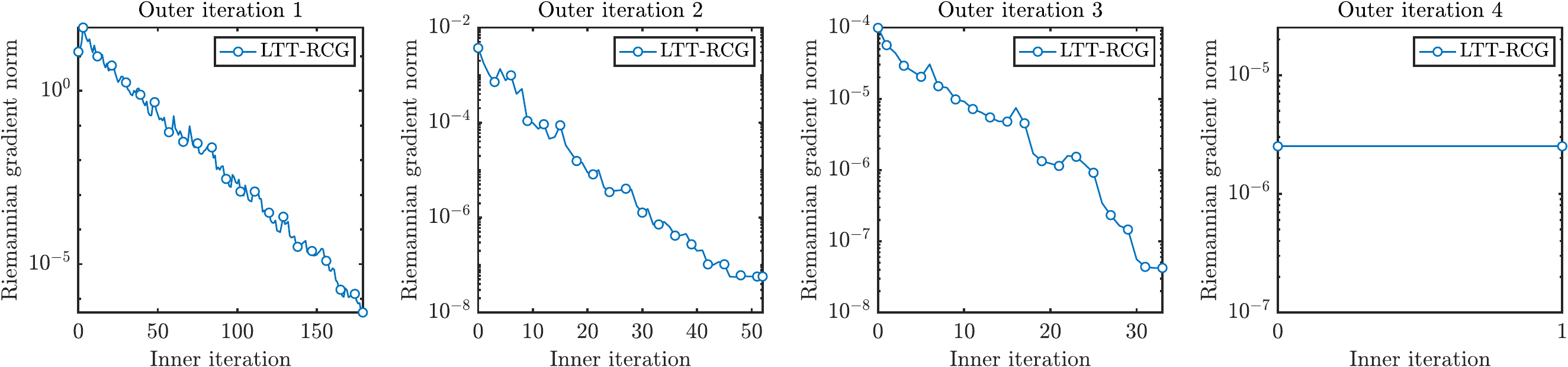}
\caption{Riemannian gradient norm histories of LTT-RCG for the four
successive frozen-potential subproblems in the Bose--Einstein experiment.}
\label{fig:bose-einstein-inner}
\end{figure}

\Cref{fig:bose-einstein} shows the evolution of the chemical potential,
energy, and relative density change over the outer iterations.  After four outer iterations, the density satisfies the stopping criterion, while the chemical potential and energy have also stabilized. These results demonstrate that LTT-RCG can be used as an inner eigensolver within a fixed-point iteration to solve nonlinear eigenvalue problems.
\begin{figure}[htbp]
  \centering
\includegraphics[width=0.90\linewidth]{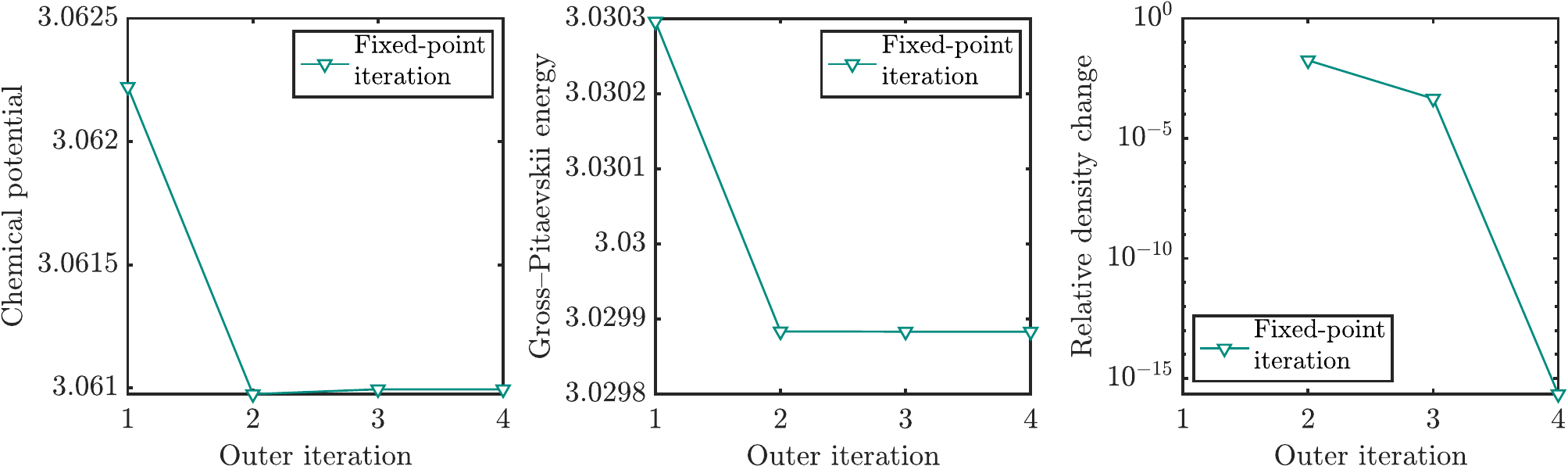}
  \caption{Frozen-potential fixed-point iteration for the three-dimensional
  Bose--Einstein problem with $n=50$, $\epsilon=1$, and TT-rank parameter
  $(1,1,1,1)$.}
  \label{fig:bose-einstein}
\end{figure}

\section{Conclusion}

We have developed LTT-RGD and LTT-RCG for high-dimensional extreme
eigenvalue problems by minimizing the Rayleigh quotient over the low-rank
tensor space with a prescribed TT-rank parameter.
A smooth parametrization manifold represents tensors throughout this space,
including tensors with ranks below the prescribed rank parameter. The tangent-space
representation, projection, vector transport, and retraction provide the
operations needed to optimize directly with TT cores. For operators given
as sums of Kronecker products, these operations avoid forming full tensors
and yield a polynomial computation cost per iteration.

The numerical results demonstrate the computational and storage advantages of the proposed methods. LTT-RCG achieves accuracy comparable to full-space eigensolvers LOBPCG and PRIMME with less computation time on larger problems, while the TT representation allows computations using tensor cores where storing full vectors is impractical. The observed reduction in numerical TT ranks and the accurate solutions obtained with overestimated rank parameters further demonstrate robustness to the choice of rank parameter.

Future work will focus on developing preconditioners to accelerate convergence and adaptive rank-selection strategies to balance approximation accuracy and computational cost. We will also investigate the approximation error introduced by restricting the original eigenvalue problem to a bounded-rank tensor space, aiming to establish how the accuracy of the resulting eigenvalues and eigenvectors depends on the rank parameter.

\section*{Acknowledgments}

This work was supported by the National Key R\&D Program of China
(grant No. 2023YFA1009300). BG was supported by the National Natural Science Foundation of
China (grant No. 12288201). YK was supported by the National Natural Science Foundation of China (grant No. 12201130) and the Natural Science Foundation of Guangdong Province of China (grant No. 2026A1515011703).
\bibliographystyle{elsarticle-num}
\bibliography{references}

\begin{thebibliography}{10}
\expandafter\ifx\csname url\endcsname\relax
  \def\url#1{\texttt{#1}}\fi
\expandafter\ifx\csname urlprefix\endcsname\relax\def\urlprefix{URL }\fi
\expandafter\ifx\csname href\endcsname\relax
  \def\href#1#2{#2} \def\path#1{#1}\fi

\bibitem{RakhubaOseledets2016}
M.~Rakhuba, I.~Oseledets, Calculating vibrational spectra of molecules using tensor train decomposition, The Journal of Chemical Physics 145~(12) (2016) 124101.
\newblock \href {https://doi.org/10.1063/1.4962420} {\path{doi:10.1063/1.4962420}}.

\bibitem{HackbuschKhoromskijSauterTyrtyshnikov2012}
W.~Hackbusch, B.~N. Khoromskij, S.~Sauter, E.~E. Tyrtyshnikov, Use of tensor formats in elliptic eigenvalue problems, Numerical Linear Algebra with Applications 19~(1) (2012) 133--151.
\newblock \href {https://doi.org/10.1002/nla.793} {\path{doi:10.1002/nla.793}}.

\bibitem{johnson1974beam}
C.~P. Johnson, K.~M. Will, Beam buckling by finite element procedure, Journal of the Structural Division 100~(3) (1974) 669--685.

\bibitem{KERNER19891}
W.~Kerner, \href{https://www.sciencedirect.com/science/article/pii/0021999189902003}{Large-scale complex eigenvalue problems}, Journal of Computational Physics 85~(1) (1989) 1--85.
\newblock \href {https://doi.org/10.1016/0021-9991(89)90200-3} {\path{doi:10.1016/0021-9991(89)90200-3}}.
\newline\urlprefix\url{https://www.sciencedirect.com/science/article/pii/0021999189902003}

\bibitem{Lehoucq2001}
R.~B. Lehoucq, A.~G. Salinger, \href{https://onlinelibrary.wiley.com/doi/abs/10.1002/fld.135}{Large-scale eigenvalue calculations for stability analysis of steady flows on massively parallel computers}, International Journal for Numerical Methods in Fluids 36~(3) (2001) 309--327.
\newblock \href {https://doi.org/10.1002/fld.135} {\path{doi:10.1002/fld.135}}.
\newline\urlprefix\url{https://onlinelibrary.wiley.com/doi/abs/10.1002/fld.135}

\bibitem{Oseledets2011}
I.~V. Oseledets, \href{https://doi.org/10.1137/090752286}{Tensor-train decomposition}, SIAM Journal on Scientific Computing 33~(5) (2011) 2295--2317.
\newblock \href {https://doi.org/10.1137/090752286} {\path{doi:10.1137/090752286}}.
\newline\urlprefix\url{https://doi.org/10.1137/090752286}

\bibitem{PhysRevB.73.094423}
F.~Verstraete, J.~I. Cirac, \href{https://link.aps.org/doi/10.1103/PhysRevB.73.094423}{Matrix product states represent ground states faithfully}, Phys. Rev. B 73 (2006) 094423.
\newblock \href {https://doi.org/10.1103/PhysRevB.73.094423} {\path{doi:10.1103/PhysRevB.73.094423}}.
\newline\urlprefix\url{https://link.aps.org/doi/10.1103/PhysRevB.73.094423}

\bibitem{steinlechner2016}
M.~Steinlechner, \href{http://epubs.siam.org/doi/10.1137/15M1010506}{Riemannian {{Optimization}} for {{High-Dimensional Tensor Completion}}}, SIAM Journal on Scientific Computing 38~(5) (2016) S461--S484.
\newblock \href {https://doi.org/10.1137/15M1010506} {\path{doi:10.1137/15M1010506}}.
\newline\urlprefix\url{http://epubs.siam.org/doi/10.1137/15M1010506}

\bibitem{RakhubaNovikovOseledets2019}
M.~Rakhuba, A.~Novikov, I.~Oseledets, Low-rank {Riemannian} eigensolver for high-dimensional {Hamiltonians}, Journal of Computational Physics 396 (2019) 718--737.
\newblock \href {https://doi.org/10.1016/j.jcp.2019.07.003} {\path{doi:10.1016/j.jcp.2019.07.003}}.

\bibitem{holtz2012}
S.~Holtz, T.~Rohwedder, R.~Schneider, \href{http://link.springer.com/10.1007/s00211-011-0419-7}{On manifolds of tensors of fixed {{TT-rank}}}, Numerische Mathematik 120~(4) (2012) 701--731.
\newblock \href {https://doi.org/10.1007/s00211-011-0419-7} {\path{doi:10.1007/s00211-011-0419-7}}.
\newline\urlprefix\url{http://link.springer.com/10.1007/s00211-011-0419-7}

\bibitem{Anderson2010}
C.~R. Anderson, A {Rayleigh--Chebyshev} procedure for finding the smallest eigenvalues and associated eigenvectors of large sparse {Hermitian} matrices, Journal of Computational Physics 229~(19) (2010) 7477--7487.
\newblock \href {https://doi.org/10.1016/j.jcp.2010.06.030} {\path{doi:10.1016/j.jcp.2010.06.030}}.

\bibitem{Knyazev2001}
A.~V. Knyazev, Toward the optimal preconditioned eigensolver: Locally optimal block preconditioned conjugate gradient method, SIAM Journal on Scientific Computing 23~(2) (2001) 517--541.
\newblock \href {https://doi.org/10.1137/S1064827500366124} {\path{doi:10.1137/S1064827500366124}}.

\bibitem{StathopoulosMcCombs2010}
A.~Stathopoulos, J.~R. McCombs, {PRIMME}: {PReconditioned} iterative {MultiMethod} eigensolver--methods and software description, ACM Transactions on Mathematical Software 37~(2) (2010) 21:1--21:30.
\newblock \href {https://doi.org/10.1145/1731022.1731031} {\path{doi:10.1145/1731022.1731031}}.

\bibitem{Schollwoeck2011}
U.~Schollw{\"o}ck, The density-matrix renormalization group in the age of matrix product states, Annals of Physics 326~(1) (2011) 96--192.
\newblock \href {https://doi.org/10.1016/j.aop.2010.09.012} {\path{doi:10.1016/j.aop.2010.09.012}}.

\bibitem{White1992}
S.~R. White, Density matrix formulation for quantum renormalization groups, Physical Review Letters 69~(19) (1992) 2863--2866.
\newblock \href {https://doi.org/10.1103/PhysRevLett.69.2863} {\path{doi:10.1103/PhysRevLett.69.2863}}.

\bibitem{HaegemanLubichOseledetsVandereyckenVerstraete2016}
J.~Haegeman, C.~Lubich, I.~Oseledets, B.~Vandereycken, F.~Verstraete, Unifying time evolution and optimization with matrix product states, Physical Review B 94~(16) (2016) 165116.
\newblock \href {https://doi.org/10.1103/PhysRevB.94.165116} {\path{doi:10.1103/PhysRevB.94.165116}}.

\bibitem{KressnerTobler2011}
D.~Kressner, C.~Tobler, Preconditioned low-rank methods for high-dimensional elliptic {PDE} eigenvalue problems, Computational Methods in Applied Mathematics 11~(3) (2011) 363--381.
\newblock \href {https://doi.org/10.2478/cmam-2011-0020} {\path{doi:10.2478/cmam-2011-0020}}.

\bibitem{Lebedeva2011}
O.~S. Lebedeva, Tensor conjugate-gradient-type method for {Rayleigh} quotient minimization in block {QTT}-format, Russian Journal of Numerical Analysis and Mathematical Modelling 26~(5) (2011).
\newblock \href {https://doi.org/10.1515/rjnamm.2011.026} {\path{doi:10.1515/rjnamm.2011.026}}.

\bibitem{HoltzRohwedderSchneider2012ALS}
S.~Holtz, T.~Rohwedder, R.~Schneider, The alternating linear scheme for tensor optimization in the tensor train format, SIAM Journal on Scientific Computing 34~(2) (2012) A683--A713.
\newblock \href {https://doi.org/10.1137/100818893} {\path{doi:10.1137/100818893}}.

\bibitem{DolgovKhoromskijOseledetsSavostyanov2014}
S.~V. Dolgov, B.~N. Khoromskij, I.~V. Oseledets, D.~V. Savostyanov, Computation of extreme eigenvalues in higher dimensions using block tensor train format, Computer Physics Communications 185~(4) (2014) 1207--1216.
\newblock \href {https://doi.org/10.1016/j.cpc.2013.12.017} {\path{doi:10.1016/j.cpc.2013.12.017}}.

\bibitem{LeeCichocki2015}
N.~Lee, A.~Cichocki, Estimating a few extreme singular values and vectors for large-scale matrices in tensor train format, SIAM Journal on Matrix Analysis and Applications 36~(3) (2015) 994--1014.
\newblock \href {https://doi.org/10.1137/140983410} {\path{doi:10.1137/140983410}}.

\bibitem{Oseledets2011DMRG}
I.~Oseledets, {DMRG} approach to fast linear algebra in the {TT}-format, Computational Methods in Applied Mathematics 11~(3) (2011) 382--393.
\newblock \href {https://doi.org/10.2478/cmam-2011-0021} {\path{doi:10.2478/cmam-2011-0021}}.

\bibitem{KressnerSteinlechnerUschmajew2014}
D.~Kressner, M.~Steinlechner, A.~Uschmajew, Low-rank tensor methods with subspace correction for symmetric eigenvalue problems, SIAM Journal on Scientific Computing 36~(5) (2014) A2346--A2368.
\newblock \href {https://doi.org/10.1137/130949919} {\path{doi:10.1137/130949919}}.

\bibitem{peng2025}
R.~Peng, C.~Zhu, B.~Gao, X.~Wang, Y.-x. Yuan, \href{http://arxiv.org/abs/2511.04369}{Normalized tensor train decomposition} (2025).
\newblock \href {http://arxiv.org/abs/2511.04369} {\path{arXiv:2511.04369}}.
\newline\urlprefix\url{http://arxiv.org/abs/2511.04369}

\bibitem{DEKTOR2026286}
A.~Dektor, P.~DelMastro, E.~Ye, R.~{Van Beeumen}, C.~Yang, \href{https://www.sciencedirect.com/science/article/pii/S0024379526001801}{Inexact subspace projection methods for low-rank tensor eigenvalue problems}, Linear Algebra and its Applications 743 (2026) 286--321.
\newblock \href {https://doi.org/10.1016/j.laa.2026.04.021} {\path{doi:10.1016/j.laa.2026.04.021}}.
\newline\urlprefix\url{https://www.sciencedirect.com/science/article/pii/S0024379526001801}

\bibitem{BachmayrKramerPfeffer2026}
M.~Bachmayr, S.~Kr{\"a}mer, M.~Pfeffer, \href{https://arxiv.org/abs/2604.16118}{Low-rank eigenvalue solvers for block-sparse matrix product states} (2026).
\newblock \href {http://arxiv.org/abs/2604.16118} {\path{arXiv:2604.16118}}.
\newline\urlprefix\url{https://arxiv.org/abs/2604.16118}

\bibitem{AbsilMahonySepulchre+2008}
P.-A. Absil, R.~Mahony, R.~Sepulchre, \href{https://doi.org/10.1515/9781400830244}{Optimization Algorithms on Matrix Manifolds}, Princeton University Press, Princeton, 2008.
\newblock \href {https://doi.org/10.1515/9781400830244} {\path{doi:10.1515/9781400830244}}.
\newline\urlprefix\url{https://doi.org/10.1515/9781400830244}

\bibitem{boumal2020introduction}
N.~Boumal, An Introduction to Optimization on Smooth Manifolds, Cambridge University Press, Cambridge, 2023.
\newblock \href {https://doi.org/10.1017/9781009166164} {\path{doi:10.1017/9781009166164}}.

\bibitem{KUTSCHAN2018370}
B.~Kutschan, \href{https://www.sciencedirect.com/science/article/pii/S0024379518300181}{Tangent cones to tensor train varieties}, Linear Algebra and its Applications 544 (2018) 370--390.
\newblock \href {https://doi.org/10.1016/j.laa.2018.01.012} {\path{doi:10.1016/j.laa.2018.01.012}}.
\newline\urlprefix\url{https://www.sciencedirect.com/science/article/pii/S0024379518300181}

\bibitem{gao2024}
B.~Gao, R.~Peng, Y.-x. Yuan, \href{https://arxiv.org/abs/2411.14093}{Desingularization of bounded-rank tensor sets} (2024).
\newblock \href {http://arxiv.org/abs/2411.14093} {\path{arXiv:2411.14093}}.
\newline\urlprefix\url{https://arxiv.org/abs/2411.14093}

\bibitem{Uschmajew2020}
A.~Uschmajew, B.~Vandereycken, \href{https://doi.org/10.1007/978-3-030-31351-7_9}{Geometric methods on low-rank matrix and tensor manifolds}, in: Handbook of Variational Methods for Nonlinear Geometric Data, Springer International Publishing, Cham, 2020, pp. 261--313.
\newblock \href {https://doi.org/10.1007/978-3-030-31351-7_9} {\path{doi:10.1007/978-3-030-31351-7_9}}.
\newline\urlprefix\url{https://doi.org/10.1007/978-3-030-31351-7_9}

\bibitem{Valentin2018}
V.~Khrulkov, I.~Oseledets, \href{https://doi.org/10.1137/16M1108194}{Desingularization of bounded-rank matrix sets}, SIAM Journal on Matrix Analysis and Applications 39~(1) (2018) 451--471.
\newblock \href {https://doi.org/10.1137/16M1108194} {\path{doi:10.1137/16M1108194}}.
\newline\urlprefix\url{https://doi.org/10.1137/16M1108194}

\bibitem{Quentin2024}
Q.~Rebjock, N.~Boumal, \href{https://arxiv.org/abs/2406.14211}{Optimization over bounded-rank matrices through a desingularization enables joint global and local guarantees}, to appear in SIAM Journal on Optimization (2024).
\newblock \href {http://arxiv.org/abs/2406.14211} {\path{arXiv:2406.14211}}.
\newline\urlprefix\url{https://arxiv.org/abs/2406.14211}

\bibitem{Yang2025}
Y.~Yang, B.~Gao, Y.-x. Yuan, \href{https://doi.org/10.1007/s10107-026-02331-7}{A space-decoupling framework for optimization on bounded-rank matrices with orthogonally invariant constraints}, Mathematical Programming (2026).
\newblock \href {https://doi.org/10.1007/s10107-026-02331-7} {\path{doi:10.1007/s10107-026-02331-7}}.
\newline\urlprefix\url{https://doi.org/10.1007/s10107-026-02331-7}

\bibitem{Duan2026}
H.~Duan, Z.~Chen, N.~Wong, \href{http://arxiv.org/abs/2603.12026}{Efficient {{Generative Modeling}} with {{Unitary Matrix Product States Using Riemannian Optimization}}} (2026).
\newblock \href {http://arxiv.org/abs/2603.12026} {\path{arXiv:2603.12026}}.
\newline\urlprefix\url{http://arxiv.org/abs/2603.12026}

\bibitem{Pippan2010}
P.~Pippan, S.~R. White, H.~G. Evertz, Efficient matrix-product state method for periodic boundary conditions, Physical Review B 81~(8) (2010) 081103.
\newblock \href {https://doi.org/10.1103/PhysRevB.81.081103} {\path{doi:10.1103/PhysRevB.81.081103}}.

\bibitem{UschmajewVandereycken2013}
A.~Uschmajew, B.~Vandereycken, The geometry of algorithms using hierarchical tensors, Linear Algebra and its Applications 439~(1) (2013) 133--166.
\newblock \href {https://doi.org/10.1016/j.laa.2013.03.016} {\path{doi:10.1016/j.laa.2013.03.016}}.

\bibitem{Dong2022CP}
S.~Dong, B.~Gao, Y.~Guan, F.~Glineur, New {Riemannian} preconditioned algorithms for tensor completion via polyadic decomposition, SIAM Journal on Matrix Analysis and Applications 43~(2) (2022) 840--866.
\newblock \href {https://doi.org/10.1137/21M1394734} {\path{doi:10.1137/21M1394734}}.

\bibitem{Gao2024TR}
B.~Gao, R.~Peng, Y.-x. Yuan, {Riemannian} preconditioned algorithms for tensor completion via tensor ring decomposition, Computational Optimization and Applications 88~(2) (2024) 443--468.
\newblock \href {https://doi.org/10.1007/s10589-024-00559-7} {\path{doi:10.1007/s10589-024-00559-7}}.

\bibitem{gaoLowrankOptimizationTucker2025}
B.~Gao, R.~Peng, Y.-x. Yuan, \href{https://doi.org/10.1007/s10107-024-02186-w}{Low-rank optimization on {{Tucker}} tensor varieties}, Mathematical Programming 214~(1--2) (2025) 357--407.
\newblock \href {https://doi.org/10.1007/s10107-024-02186-w} {\path{doi:10.1007/s10107-024-02186-w}}.
\newline\urlprefix\url{https://doi.org/10.1007/s10107-024-02186-w}

\bibitem{HeinEisertBriegel2004}
M.~Hein, J.~Eisert, H.~J. Briegel, Multiparty entanglement in graph states, Physical Review A 69 (2004) 062311.
\newblock \href {https://doi.org/10.1103/PhysRevA.69.062311} {\path{doi:10.1103/PhysRevA.69.062311}}.

\bibitem{CockburnXia2022}
B.~Cockburn, S.~Xia, An adjoint-based super-convergent {Galerkin} approximation of eigenvalues, Journal of Computational Physics 449 (2022) 110816.
\newblock \href {https://doi.org/10.1016/j.jcp.2021.110816} {\path{doi:10.1016/j.jcp.2021.110816}}.

\bibitem{GongLiuWang2024}
W.~Gong, X.~Liu, J.~Wang, Hearing the triangles: A numerical perspective, CSIAM Transactions on Applied Mathematics 5~(1) (2024) 58--72.
\newblock \href {https://doi.org/10.4208/csiam-am.SO-2023-0027} {\path{doi:10.4208/csiam-am.SO-2023-0027}}.

\bibitem{CancesKemlinLevitt2024}
E.~Canc{\`e}s, G.~Kemlin, A.~Levitt, A priori error analysis of linear and nonlinear periodic {Schr{\"o}dinger} equations with analytic potentials, Journal of Scientific Computing 98 (2024) 25.
\newblock \href {https://doi.org/10.1007/s10915-023-02421-0} {\path{doi:10.1007/s10915-023-02421-0}}.

\bibitem{Henning2025}
P.~Henning, E.~Jarlebring, The {Gross--Pitaevskii} equation and eigenvector nonlinearities: Numerical methods and algorithms, SIAM Review 67~(2) (2025) 256--317.
\newblock \href {https://doi.org/10.1137/22M1516324} {\path{doi:10.1137/22M1516324}}.

\end{thebibliography}

\end{document}